\documentclass[11pt]{article}
\usepackage[T1]{fontenc}
\usepackage[utf8]{inputenc}
\usepackage{lmodern}
\usepackage{amsmath,amssymb,amsthm}
\usepackage{geometry}
\usepackage{booktabs}
\usepackage{enumitem}
\usepackage[hidelinks]{hyperref}

\newtheorem{theorem}{Theorem}[section]
\newtheorem{lemma}[theorem]{Lemma}
\newtheorem{proposition}[theorem]{Proposition}
\newtheorem{corollary}[theorem]{Corollary}
\newtheorem{maintheorem}{Theorem}

\newtheorem{maincorollary}[maintheorem]{Corollary}
\theoremstyle{definition}
\newtheorem{definition}[theorem]{Definition}
\newtheorem{example}[theorem]{Example}
\theoremstyle{remark}

\newcommand{\F}{\mathbb{F}}
\newcommand{\Z}{\mathbb{Z}}
\newcommand{\GR}{\mathrm{GR}}
\newcommand{\Tr}{\mathrm{Tr}}
\newcommand{\ol}[1]{\overline{#1}}
\newcommand{\Dexc}{\Delta_{\mathrm{exc}}}
\newcommand{\Dunit}{\Delta_{\mathrm{unit}}}

\title{Coefficient choice in Galois-ring lifts of APN permutations:\\ sharp unit-direction spectra and one-term optimal lifts}
\author{Hiroki Fukui\thanks{Research Institute of Criminal Psychiatry / Sex Offender Medical Center, Tokyo, Japan;
and Department of Neuropsychiatry, Kyoto University, Kyoto, Japan. \texttt{fukui@somec.org}.
ORCID: 0009-0008-7122-522X.}}
\date{27 October 2026}

\begin{document}
\maketitle

\begin{abstract}
Permutation polynomials over the Galois ring $\GR(2^k,m)$ are nonlinear components for symmetric primitives
that add keys in the ring, as in secure computation modulo $2^k$. Rønjom and Sandrib showed that
their differential uniformity is at least $2q^{k-1}$, where $q=2^m$. For the class studied here it is attained on top-layer differences, while almost all differences are units. We ask how small the difference table can be on unit differences
when the residue permutation is fixed and only higher coefficient digits of its lift are chosen. Let $m\ge5$ be
odd, $k\ge3$, and let $p$ be a reduced quadratic almost perfect nonlinear (APN) permutation of $\F_q$ whose derivative
is nowhere zero and two-to-one. Over all lifts of $p$, the least possible maximum on unit differences is $q$ or $2q$,
decided by two invariants of $p$ and the level $k$. We describe every lift attaining it and show that the coefficients
modulo $8$ decide optimality at every level. For every odd $m\ge5$, residue permutations with at most $16$ monomials
attain $q$ after one $2$-adic digit of one coefficient of the natural lift is changed. For an explicit infinite family,
one added monomial lowers the maximum on unit differences from $4q$, or from at least $q^2$, to $2q$ at every level
without raising the degree. Under a uniform starting state and independent uniform round keys, valuation preservation yields layerwise bounds
for characteristics and two-round differentials that distinguish lifts with the same global uniformity. Core results are formalized in Lean~4.
\end{abstract}

\noindent\textbf{Keywords:} Galois rings; APN permutations; differential uniformity; coefficient lifts;
differential characteristics; formal verification.

\noindent\textbf{MSC 2020:} Primary 94A60; Secondary 11T71, 11T06, 13M10, 68V20.

\section{Introduction}\label{sec:intro}

Let $R_k=\GR(2^k,m)$ be the Galois ring of characteristic $2^k$ whose residue field is $\F_q$, $q=2^m$. Arithmetic in
$\Z/2^k$ and in Galois rings over it is the native arithmetic of several protocols for secure multiparty
computation~\cite{SPDZ2k,ACDEY}. In such settings additions are cheap and nonlinear operations are ring
multiplications, and over prime fields the same cost model has led to symmetric primitives built from low-degree
polynomials~\cite{MiMC}. A primitive over $R_k$ in this spirit adds round keys in $R_k$ and uses a permutation
polynomial of $R_k$ as its nonlinear layer. Difference propagation through this nonlinear layer is described by its difference table with respect to addition in
$R_k$. The largest nontrivial entry is its differential uniformity in the sense of Nyberg~\cite{Nyberg}, a basic
parameter in differential cryptanalysis~\cite{BiSh}.

Rønjom and Sandrib~\cite{RS} proved that every permutation polynomial of $R_k$ has differential uniformity at least
$2q^{k-1}$, and concluded that polynomial nonlinear layers over Galois rings are far from optimal. For the permutations considered below, their lower bound is
attained on the top layer $2^{k-1}R_k$ of input differences, which contains a fraction $(q-1)/(q^k-1)$ of the nonzero
differences. Almost all differences are units, and every lift considered here keeps the $2$-adic valuation of a
difference (Proposition~\ref{prop:valuation}). The behaviour on unit differences is therefore a separate design quantity, and so is
the behaviour on each intermediate layer. For lifts whose residue is APN, the general upper bound on unit differences is
$2q^{k-1}$~\cite[Prop.~4(1)]{RS}.

This paper determines how far the choice of coefficients can improve the unit differences. We fix a reduced quadratic
APN permutation $p$ of $\F_q$ satisfying hypothesis (H) below, where reduced means degree less than $q$, and consider its lifts: the
polynomials over $R_k$ that reduce to $p$ coefficient by coefficient. We determine the least possible largest entry of
the difference table on unit differences, all lifts attaining it, and the cost of attaining it. All lifts induce the same permutation of $\F_q$ and have the same differential uniformity $2q^{k-1}$
(for $\alpha\ne0$ the solutions of $F(x+\alpha)-F(x)=\beta$ lie in at most two residue classes~\cite{RS,Fukui1}). On
unit differences they behave very differently: depending on the lift, the largest entry ranges from the minimum ($q$ or $2q$) to
$q^2$ and more.

\paragraph{Terms.}
A \emph{lift} of $p$ is a polynomial $F\in R_k[X]$ whose coefficients reduce to those of $p$; the \emph{natural lift}
replaces each coefficient by the element of $R_k$ with the same binary coordinates in the polynomial basis
$1,t,\dots,t^{m-1}$ (Section~\ref{sec:prelim}). We call $k$ the \emph{level}. We write $\delta_F(\alpha,\beta)$ for the number of
$x\in R_k$ with $F(x+\alpha)-F(x)=\beta$. We assume that $p'$ is nowhere zero and two-to-one, with $p'(a)=p'(b)$ exactly
when $b\in\{a,a+\rho\}$ for a fixed $\rho\ne0$ (hypothesis (H), Section~\ref{sec:prelim}). A unit difference $\alpha$ is
\emph{exceptional} if its residue is $\rho$ and \emph{ordinary} otherwise. Every lift has entries at most $2$ in
ordinary directions (Lemma~\ref{lem:ordinary}), so everything happens in the exceptional directions, and we write
$\Dexc(F)$ for the largest entry there. Differences in $2^sR_k\setminus2^{s+1}R_k$ form the \emph{layer} $s$; the unit
differences are layer $0$ and the top layer is $s=k-1$.

\paragraph{Results.}
The minimum of $\Dexc$ over all lifts, of any degree, is $q$ or $2q$ (Theorem~\ref{thm:minimax}). Which value occurs
depends on whether $k=3$ or $k\ge4$ and on two invariants of $p$: the quadratic coefficient $\mu$ of the difference
$p(X+\rho)+p(X)$, and a zero set $Z\subseteq\F_q$ (both defined in Section~\ref{sec:main}). Theorem~\ref{thm:minimizers}
characterizes every lift attaining the minimum. Its criterion uses a function $A_F$ on $\F_q$, which decides whether
the solutions over a residue class split into several classes, and, when $\mu\ne0$, a bit that decides whether the
solutions coming from the two residues $a$ and $a+\rho$ overlap. Optimality can be read off from the coefficients modulo
$8$ (Theorem~\ref{thm:spectrum}, Corollary~\ref{cor:crosslevel}): a minimizer modulo $8$ remains optimal at every higher
level exactly when $A_F$ has no zero, and a family of lifts that is optimal at every level exists exactly when $\mu=0$ or
$Z\ne\emptyset$. The minimum is also cheap to reach. For the infinite family $P_{m,r}$ of quadratic APN permutations, one
added monomial gives an optimal lift at every level without raising the degree, and for $r\ge5$ a change of one
coefficient digit does the same (Theorem~\ref{thm:oneterm}, Proposition~\ref{prop:lowcost}). For every odd $m\ge5$ there
are residue permutations with at most $16$ monomials for which the minimum is $q$, and a suitable change of one $2$-adic
digit of one coefficient of the natural lift attains it (Theorems~\ref{thm:minimax} and~\ref{thm:sparse},
Proposition~\ref{prop:lowcost}).

For example, let $r=3$ and $3\nmid m$. The natural lift $F$ of $P_{m,3}$ has $\Dexc(F)=4q$. The lift
$F+2X^{4+2^{m-1}}$ has $\Dexc=2q$, and no lift of $P_{m,3}$ does better. A unit input difference then propagates with
probability at most $2/q^{k-1}$, which is $q^{k-2}$ times smaller than the general unit bound.

The proofs are local. Along a residue class the difference equation reduces to the quadratic map
$y\mapsto\mu y^2+A_F(a)y$ over $\F_q$, and one second-order invariant records how the solutions over $a$ and $a+\rho$
are shifted against each other. The same analysis gives the full spectrum of every exceptional row when $A_F$ has no
zero, an exact formula for the penultimate layer, and the intermediate layers. It also shows a conflict: when $\mu\ne0$,
optimizing the middle layer at $k=3$ forces the exceptional directions at every $k\ge4$ far from the optimum.

\paragraph{Cryptographic consequences.}
The choice of coefficients becomes a design parameter with a sharp target and a certificate that can be checked
modulo $8$. Since lifts preserve valuations, the bound for each layer combines with the branch number $B$ of the reduced
linear layer as in the wide-trail argument~\cite{DRwide,DR}, under a uniform starting state and independent uniform round
keys added in $R_k$ (Corollary~\ref{cor:crypto}, Proposition~\ref{prop:layerchar}). The top layer sets the bound over
all inputs: for every lift, every differential over two or more rounds has probability at most $(2/q)^{B-1}$, and the
bound for characteristics with input in the top layer is $(2/q)^{B\lfloor N/2\rfloor}$. The coefficient choice improves the
bounds for inputs below the top layer. In an eight-round example with four $21$-bit words and a linear layer of branch
number $5$, the bound for characteristics with a unit input word improves from $2^{-140}$ for the natural lift to
$2^{-260}$ for the lift $F^\sharp$ of $P_{7,5}$ with one added monomial, while the top layer gives $2^{-120}$ for every
lift. The same valuation argument bounds two-round differentials, without the factor up to $2^n$ that a count over the
active words would introduce for $n$ words: a differential whose input has a unit word has probability at most
$2^{-52}$ for $F^\sharp$ against $2^{-28}$ for the natural lift, and the bound over all differentials is $2^{-24}$ for
both (Propositions~\ref{prop:hull} and~\ref{prop:hullunit}). Markov's inequality then bounds the proportion of keys for
which a fixed differential is large (Proposition~\ref{prop:fixedkey}). A fixed plaintext breaks the product formula
(Example~\ref{ex:fixed}), so the probability model matters.

These results concern the polynomial part of a design. A cipher built only from ring polynomials, additive linear layers
and keys added in $R_k$ maps inputs that are congruent modulo $2$ to outputs that are congruent modulo $2$
(Proposition~\ref{prop:congpart}), so a complete design contains a further component that breaks this partition. Our theorems certify the polynomial segments and the coefficients
chosen inside them.

\paragraph{Relation to other work.}
This paper is a substantially revised and extended version of the author's preprint~\cite{Fukui2}, from which the
family $P_{m,r}$ and its analysis on ordinary directions and on the top layer are taken. The minimax theorem, the
classification of minimizers, the spectra and their certification modulo $8$, the results across levels and layers, the
optimal lifts of sparse representatives and the differential bounds are new. The earlier paper~\cite{Fukui1} constructs
optimal reduced representatives of quadratic APN permutations in odd dimension; here the representative is fixed and the
coefficients of its lift are optimized. A separate manuscript of the author, currently under review, leaves the
polynomial setting and studies digit-exchanged permutations of $\GR(4,m)$; it shares the lifting framework and the
basic directional estimates with the present paper. Bartoli and Stănică~\cite{BS} study when reduced polynomial lifts of
APN permutations are permutations. They give infinite liftable families, including a trace family of degree $q/2+2$
with a two-to-one derivative~\cite[Thm.~2.4]{BS}, and a liftability criterion for reduced Gold-power
representatives~\cite[Thm.~6.1, Cor.~6.2]{BS}. The residue polynomials $P_{m,r}$ fall under their Corollary~6.2, and
$P_{m,3}$ is their Remark~6.3. They do not consider the difference tables of individual lifts. Their trace family
satisfies (H), and we apply our theorems to it in Section~\ref{sec:discussion}. The dependence of the difference table on the
lift is easy to see: over $\Z/8$ the permutations $X$ and $X+2X^2$ have the same residue but different differences in
direction $1$. What is new here is the sharp minimum for a fixed residue, its minimizers, and their certification and
construction.

\paragraph{Verification.}
The central results are formalized in Lean~4, and the finite computations behind the examples and the existence proof
for the sparse family can be replayed from Online Resource~1. Appendix~\ref{app:verification}
lists what is formalized and what remains on paper.

\paragraph{Organization.}
Section~\ref{sec:main} states the main results. Section~\ref{sec:prelim} sets up notation and the local expansion along a
residue class. Section~\ref{sec:invariants} introduces the lift invariants and shows how to prescribe them.
Section~\ref{sec:spectra} proves the exceptional spectra and the lower bounds, and Section~\ref{sec:minimax} the minimax
theorem, the classification of minimizers and optimality across levels. Section~\ref{sec:family} treats $P_{m,r}$,
Section~\ref{sec:layers} the lower layers, and Section~\ref{sec:crypto} the cryptographic consequences.
Section~\ref{sec:discussion} discusses residue representatives, the trace family of~\cite{BS} and open problems.
Appendix~\ref{app:verification} describes the computations, the formalization and the use of AI tools.

\section{Main results}\label{sec:main}

Throughout, $m\ge5$ is odd, $q=2^m$, $k\ge3$ and $R_k=\GR(2^k,m)$. We fix a reduced quadratic APN
permutation $p\in\F_q[X]$ satisfying (H) of Section~\ref{sec:prelim}. In particular $p'=\varepsilon+K$ with $K$
linearized, $\ker K=\{0,\rho\}$, $\rho\ne0$ and $\varepsilon\notin K(\F_q)$. A \emph{lift} of $p$ is any
$F\in R_k[X]$ whose coefficients reduce to those of $p$. We impose no bound on the degree of $F$. A direction
$\alpha\in R_k$ is \emph{exceptional} if $\ol\alpha=\rho$, and
\begin{gather*}
  \Dexc(F)=\max\{\delta_F(\alpha,\beta):\ \ol\alpha=\rho,\ \beta\in R_k\},\\
  \delta_F(\alpha,\beta)=\#\{x\in R_k: F(x+\alpha)-F(x)=\beta\}.
\end{gather*}
Two invariants depend only on $p$:
\[
  \mu=[X^2]\bigl(p(X+\rho)+p(X)\bigr),\qquad
  Z=\{a\in\F_q:\ p(a+\rho)+p(a)+\rho\,p'(a)=0\}.
\]
Two invariants depend on the lift. For $\ol x=a$ and $\ol\tau=\rho$ put
\begin{gather*}
  A_F(a)=\ol{\tfrac12\bigl(F'(x+\tau)-F'(x)\bigr)},\\
  E_F(a)=\ol{\tfrac14\bigl(F(x+2\tau)-2F(x+\tau)+F(x)\bigr)}\quad(a\in Z).
\end{gather*}
Both are well defined (Lemmas~\ref{lem:A} and~\ref{lem:E}). When $\mu\ne0$, $a\in Z$ and $A_F(a)\ne0$ we set
$\chi_F(a)=\Tr\bigl(\mu E_F(a)/A_F(a)^2\bigr)$.

\begin{maintheorem}[A one-term optimal lift]\label{thm:oneterm}
Let $m\ge5$ be odd, $3\le r<m$ odd with $\gcd(r,m)=1$, and let $F$ be the natural $0/1$ lift of the explicit APN
permutation $P_{m,r}$ of Section~\ref{sec:family}. Put
\[
  F^\sharp=F+2X^3\ (r\ge5),\qquad F^\sharp=F+2X^{4+2^{m-1}}\ (r=3).
\]
For every $k\ge3$ every exceptional row of $F^\sharp$ consists of exactly $q^{k-1}/2$ entries equal to $2q$
and zeros, and every ordinary unit row has entries at most $2$. Thus $\Dunit(F^\sharp)=2q$, which is the least
value of $\Dexc$ over all lifts of $P_{m,r}$. The correction does not raise the degree $3q/4$ and, in the termwise evaluation model, costs one ring
multiplication and two additions on top of the squaring schedule of $F$; for $r\ge5$ a change of one coefficient
digit gives the same spectra with no extra multiplication (Proposition~\ref{prop:lowcost}). The natural lift itself has
$\Dexc=4q$ for $r=3$ and $\Dexc\ge q^2$ for $r\ge5$ at every level; at $k=3$ and $r\ge5$ its value is exactly $q^2$ if
$r\equiv1\pmod4$ and $2q^2$ if $r\equiv3\pmod4$ (Corollary~\ref{cor:natural}).
\end{maintheorem}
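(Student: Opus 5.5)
This theorem is a synthesis statement, so I would prove it by specializing the general machinery (Theorems~\ref{thm:minimax}, \ref{thm:minimizers}, \ref{thm:spectrum}) to $p=P_{m,r}$ and computing the lift invariants explicitly.

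\emph{Step 1: the invariants of the residue.} From the definition of $P_{m,r}$ in Section~\ref{sec:family}, I would verify (H): $P_{m,r}'=\varepsilon+K$ with $K$ linearized and $\ker K=\{0,\rho\}$. Then I would compute the residue invariants $\mu$ and $Z$. Because $\gcd(r,m)=1$ and $m$ is odd, the kernel and image of the relevant linearized maps are controlled by trace arguments. From $\mu$, $Z$ and the level, Theorem~\ref{thm:minimax} gives the minimum of $\Dexc$ over all lifts. The statement predicts $2q$, so this step must show that the case giving $q$ is excluded for this family: for example $\mu\ne0$ and $Z=\emptyset$, or whichever combination Theorem~\ref{thm:minimax} attaches to $2q$ uniformly in $k\ge3$.

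\emph{Step 2: the lift invariants of $F^\sharp$.} Next I would compute $A_{F^\sharp}=A_F+A_{2G}$, where $G=X^3$ or $G=X^{4+2^{m-1}}$. By definition,
\[
A_{2G}(a)=\ol{G'(x+\tau)-G'(x)}=g'(a+\rho)+g'(a)
\]
with $g=\ol G$. For $g=X^3$ this is $\rho^2+\dots$; more precisely $g'=X^2$, so $A_{2G}(a)=\rho^2$, a nonzero constant. For $r=3$ the monomial $X^{4+2^{m-1}}$ is chosen so that its contribution cancels the linear part of $A_F$. I would compute $A_F$ for the natural $0/1$ lift directly from the binary carries of $P_{m,r}$ (the quantity $\frac12$ of a derivative difference). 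The goal is to show that $A_{F^\sharp}$ has no zero on $\F_q$. When $\mu\ne0$ and $Z\neq\emptyset$ I would also compute $E_{F^\sharp}$ and check the bit $\chi_{F^\sharp}$ required by Theorem~\ref{thm:minimizers}. Once $A_{F^\sharp}$ is zero-free, Theorem~\ref{thm:spectrum} and Corollary~\ref{cor:crosslevel} make the certificate modulo $8$ propagate to every $k\ge3$. Theorem~\ref{thm:spectrum} should also give the exact exceptional row spectrum ($q^{k-1}/2$ entries equal to $2q$, the rest zero), via the local quadratic $y\mapsto\mu y^2+A(a)y$, which is two-to-one when $\mu A(a)\ne0$. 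Lemma~\ref{lem:ordinary} handles the ordinary rows.

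\emph{Step 3: the natural lift and the cost claims.} For the natural lift, the same computation of $A_F$ exhibits zeros. For $r\ge5$, $A_F$ should be a linearized polynomial with nontrivial kernel, so the spectrum at a zero $a$ degenerates: the local equation collapses and a whole residue class contributes. This yields $\Dexc\ge q^2$. The exact value at $k=3$ comes from counting the zeros of $A_F$ together with the $E$-contribution, and its parity in $r \bmod 4$ decides between $q^2$ and $2q^2$ (Corollary~\ref{cor:natural}). For $r=3$, the zeros are mild enough that $\Dexc=4q$. The degree claim is immediate: $3<3q/4$ and $4+2^{m-1}\le 3q/4$. The cost and one-digit claims follow from Proposition~\ref{prop:lowcost}: for $r\ge5$, an existing coefficient can absorb the $2X^3$ term as a digit change, or $X^3$ is already computed in the squaring schedule.

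\emph{Main obstacle.} I expect the main obstacle to be the explicit evaluation of $A_F$ for the natural lift. This requires tracking the $2$-adic carries of the $0/1$ coefficients under Teichmüller-type arithmetic. I would first reduce $A_F$ to a linearized polynomial plus a constant, and then use $\gcd(r,m)=1$ together with $m$ odd to decide when $A_{F^\sharp}$ can vanish, via a trace condition $\Tr(c)=1$.
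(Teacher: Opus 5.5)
Your outer structure is right: compute $\rho,\mu,Z$, compute $A_{F^\sharp}$, then apply Theorem~\ref{thm:S3} and the minimax table. However, several concrete claims are wrong, and they are the points the theorem turns on.

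\textbf{Step 1 and the invariant $Z$.} You never determine $Z$. Your sample cell ``$\mu\ne0$, $Z=\emptyset$'' is the one whose optimum is $q$ at $k=3$, so there $F^\sharp$ would not be optimal. The fact you need is $\Lambda\equiv0$, i.e.\ $Z=\F_q$ with $\rho=1$. It follows by summing $D_1X^{2^i}=1$ and $D_1X^{2^a+2^b}=X^{2^a}+X^{2^b}+1$ over the monomials of $p$ and adding $p'$; for $r\ge5$ this uses that $T$ has an odd number $r-2$ of terms. This fact does two jobs.
\begin{enumerate}
\item It gives the lower bound $2q$ at every level via Theorem~\ref{thm:M1}(ii).
\item For $r\ge5$, where $\mu=0$, it produces the spectrum. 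There the local map $y\mapsto A(a)y$ is bijective, and the entries $2q$ come from the two classes of each pair $\{a,a+1\}$ passing the gate for the same $\beta\bmod4$. So $n_q=(q-|Z|)q^{k-2}$ vanishes only because $|Z|=q$.
\end{enumerate}
Your explanation through a two-to-one quadratic covers only the case $\mu\ne0$.

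\textbf{The case $r=3$.} This case is misdescribed for both lifts.
\begin{enumerate}
\item The correction does not touch $A$. Write $2X^{4+2^{m-1}}=2(X^{2+u})^2$. By your own formula $A_{2G}=g'(a+\rho)+g'(a)$ with $g'=(4+2^{m-1})X^{3+2^{m-1}}=0$, so it cannot ``cancel the linear part of $A_F$''.
\item For the natural lift, $A_F=1+X+X^{u/2}+X^u+X^v$ is already zero-free, since its trace is $1$.
\item Since $\mu=1$, the defect of the natural lift is the overlap bit. From $E_F(0)=0$ and $A_F(0)=1$ you get $\chi_F(0)=0$, so $\nu\ge2$ and entries $4q$ occur. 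Theorem~\ref{thm:S3} gives $4q$ as the upper bound.
\item Your claim that ``the zeros are mild enough that $\Dexc=4q$'' cannot hold: any zero of $A_F$ forces $\Dexc\ge q^2$ at $k\ge4$ by Theorem~\ref{thm:M1}(iii).
\item The correction is an $E$-shift in the sense of Proposition~\ref{prop:Econtrol}. It adds $(D_1X^{2+u})^2=X^4+X^v+1$ to $E_F$.
\end{enumerate}
The real work, which you leave as ``check the bit'', is proving $\chi_{F^\sharp}\equiv1$. You must compute $E_F$ explicitly modulo $8$ and exhibit $\varphi$ with $\varphi^2+A\varphi=E_{F^\sharp}+A^2$, so that $\Tr(E_{F^\sharp}/A^2)=\Tr\bigl(1+(\varphi/A)^2+\varphi/A\bigr)=1$. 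This is the main obstacle. Evaluating $A_F$ is not: it is a routine computation modulo $4$ using $(X+1)^{2^j}\equiv X^{2^j}+1+2X^{2^{j-1}}$, with no Teichmüller arithmetic involved.

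\textbf{Cost claim.} $x^3$ is not in the squaring schedule. The one extra multiplication is exactly $x\cdot x^2$, respectively $x^4\cdot x^{2^{m-1}}$. The digit-change lift of Proposition~\ref{prop:lowcost} is a different lift, with coefficient $3$ at $X^{2^{r-2}+1}$, that has the same $A_F$. That suffices for the same spectra only because for $\mu=0$ the spectrum depends on $A_F$ and $Z$ alone.
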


\begin{maintheorem}[Minimax over all lifts]\label{thm:minimax}
The smallest possible value of $\Dexc(F)$ over all lifts $F$ of $p$ is
\begin{center}
\begin{tabular}{lcc}
\toprule
 & $k=3$ & $k\ge 4$\\ \midrule
 $\mu=0$, $Z=\emptyset$ & $q$ & $q$\\
 $\mu=0$, $Z\neq\emptyset$ & $2q$ & $2q$\\
 $\mu\neq0$, $Z=\emptyset$ & $q$ & $2q$\\
 $\mu\neq0$, $Z\neq\emptyset$ & $2q$ & $2q$\\ \bottomrule
\end{tabular}
\end{center}
\end{maintheorem}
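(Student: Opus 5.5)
The proof splits into a lower bound and a matching construction, both driven by the local expansion along a residue class. Fix an exceptional direction $\alpha$ with $\ol\alpha=\rho$ and a residue $a$. Write $x=x_0+2y$ with $\ol{x_0}=a$ and expand $F(x+\alpha)-F(x)$ in $2$-adic digits of $y$.

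Since $\ol\alpha=\rho\in\ker K$, the derivative $p'(a+\rho)+p'(a)$ vanishes. The first-order term in $y$ is therefore divisible by $2$, and its next digit is governed by $A_F(a)$. The quadratic term contributes the constant $\mu$. Reducing the equation $F(x+\alpha)-F(x)=\beta$ along the residue class yields the quadratic map $y\mapsto\mu y^2+A_F(a)y$ over $\F_q$.

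Solutions exist only for residues in $\{a,a+\rho\}$ with $p(a+\rho)+p(a)=\ol\beta$. There are two such residues, because the derivative of $p$ in direction $\rho$ is $2$-to-$1$. Over each residue class one then counts lifted solutions level by level: a Hensel-type count, using Lemmas~\ref{lem:A} and~\ref{lem:E} for well-definedness.

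For the \textbf{lower bound} I would prove the following per-row estimates.
\begin{itemize}[label={}]
\item \emph{Case $\mu=0$.} The map $y\mapsto A_F(a)y$ is either bijective (when $A_F(a)\neq0$) or zero. In the bijective case the solutions over one residue class number exactly $q^{k-2}\cdot(\text{something})$. Summing over $\beta$ and using that the row has total $q^k$ spread over at most $q^{k-1}/2$ admissible $\beta$, one gets an average entry $\ge 2q$. I expect this forces $\Dexc\ge q$ in general.
\item \emph{Points of $Z$.} These are exactly the $a$ where the residue constant term vanishes together with the first derivative correction. There the two classes $a$ and $a+\rho$ cannot be separated by $\ol\beta$ at the next digit, so their solution sets merge and the entry doubles to $2q$.
\item \emph{Case $\mu\neq0$.} The map $\mu y^2+A_F(a)y$ is $2$-to-$1$ when $A_F(a)\neq0$. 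At $k=3$ this doubling can be compensated, since the two image halves fall into different $\beta$. For $k\ge4$, the next digit involves $E_F$ and the trace condition $\chi_F(a)$, and a pigeonhole over the two residues forces some entry $\ge 2q$ regardless of $Z$.
\end{itemize}
Concretely, I would compute the entry $\delta_F(\alpha,\beta)$ as a sum over the two residues of $q^{k-2}\cdot\#\{\text{solutions of the local quadratic}\}$ times splitting factors. I would then show that some $\beta$ attains the claimed minimum in each case of the table.

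For the \textbf{upper bound} I would use the prescription result of Section~\ref{sec:invariants}. The higher digits of the coefficients can be chosen so that $A_F$ is any prescribed function, subject to the constraints coming from $p$. I would choose $A_F$ nowhere zero, and when $\mu\neq0$ choose $E_F$ so that the bits $\chi_F(a)$ separate the solution sets from $a$ and $a+\rho$. Plugging these into the same count gives every entry at most $q$, or at most $2q$ in the cases where $Z\neq\emptyset$ or ($\mu\neq0$, $k\ge4$).

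The \textbf{main obstacle} is the case $\mu\neq0$, $k\ge4$: proving that no choice of $A_F,E_F$ avoids $2q$. The $k=3$ optimum requires the two-to-one images from $a$ and $a+\rho$ to be disjoint modulo $8$. At the next level the second-order invariant shifts them by an amount whose trace is forced: the trace of $\mu E/A^2$ summed over the pair must be $1$ by a relation derived from $p(a+\rho)+p(a)$. I would attack this by computing the digit-$3$ term of $F(x+\alpha)-F(x)$ explicitly and exhibiting a collision.
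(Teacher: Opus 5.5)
There is a genuine gap. It sits in the row $\mu\neq0$, $Z=\emptyset$, the only row where the answer depends on $k$, and it affects both directions of that row.

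First, the construction at $k=3$ fails. You take $A_F$ nowhere zero. But if $\mu\neq0$ and $A_F(a)\neq0$, then $L_a(y)=\mu y^2+A_F(a)y$ is two-to-one (Lemma~\ref{lem:overlap}(i)). The two solutions $y$ and $y+A_F(a)/\mu$ of $L_a(y)=c_a$ belong to the same $\beta$. By Lemma~\ref{lem:local}(ii), the class of $a$ alone then contributes $2q$ to that entry; this is $n_{2q}=(q-s)q/2>0$ in Theorem~\ref{thm:S12}(i). So your remark that at $k=3$ the doubling is compensated because the two image halves fall into different $\beta$ is false, and your lift has $\Dexc=2q$, not $q$. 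The construction that reaches $q$ is the opposite one. Make $A_F\equiv0$ by Proposition~\ref{prop:Acontrol}, so that $L_a(y)=\mu y^2$ is bijective. With $Z=\emptyset$ the two classes of a pair pass the gate for different $\beta\bmod4$, and every nonzero entry equals $q$ (Theorem~\ref{thm:S12}(i) with $s=q$, $z_0=z$, $\nu=0$).

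Second, the lower bound for $\mu\neq0$, $k\ge4$ cannot work through the mechanism you propose. The invariant $\chi_F$ is defined only on $Z$, which is empty in the critical cell. The forced relation you predict is also false. By Proposition~\ref{prop:Etransl}, $\Tr\bigl(\mu(E_F(a)+E_F(a+\rho))/A^2\bigr)=\Tr(s+s^2)=0$ with $s=\mu\rho/A$, so the trace summed over a pair is always $0$. In fact $E_F$ and $\chi_F$ enter no lower bound; they can be prescribed freely (Proposition~\ref{prop:Econtrol}) and matter only for the construction.

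The missing idea is a dichotomy on $A_F$ alone.
\begin{itemize}
\item If $A_F(a)=0$ for some $a$, then $G'(x)\in4R_k$ for $\ol x=a$. For $h\in2^{k-2}R_k$ both $hG'(x)$ and $h^i$ ($i\ge2$) vanish once $k\ge4$. So $G$ is constant on the $q^2$ points $x+2^{k-2}R_k$, and $\Dexc\ge q^2$ (Theorem~\ref{thm:M1}(iii)). This is exactly why the $k=3$ optimizer $A_F\equiv0$ breaks down at $k\ge4$.
\item If $A_F$ has no zero, take $\beta=F(x_a+\alpha)-F(x_a)$, so that $c_a=0$. Then $L_a(y)=0$ has the two simple roots $0$ and $A_F(a)/\mu$. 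Each lifts by Hensel's lemma over $R_{k-2}$ to $q$ solutions (Theorem~\ref{thm:S3}), so some entry is at least $2q$.
\end{itemize}

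Finally, your averaging argument for $\Dexc\ge q$ miscounts. The values on one residue class lie in a single coset of $4R_k$ (Lemma~\ref{lem:gate}(i)), so at most $q\cdot q^{k-2}=q^{k-1}$ values of $\beta$ occur, not $q^{k-1}/2$. Your count would give $\Dexc\ge2q$ for every lift, contradicting the entries $q$ in the table. Pigeonhole on a single class gives $\Dexc\ge q$ directly.
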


\begin{maintheorem}[All minimizers]\label{thm:minimizers}
Let $k\ge4$. If $\mu=0$, a lift is optimal if and only if $A_F$ has no zero.
If $\mu\neq0$, a lift is optimal if and only if $A_F$ has no zero and $\chi_F\equiv1$ on $Z$.
For $k=3$ the optimal lifts are those with $A_F$ zero-free when $\mu=0$,
those with $A_F\equiv0$ when $\mu\neq0$ and $Z=\emptyset$,
and those with $\chi_F(a)=1$ at every $a\in Z$ with $A_F(a)\neq0$ when $\mu\neq0$ and $Z\neq\emptyset$.
\end{maintheorem}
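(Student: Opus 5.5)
The plan is to read off Theorem~\ref{thm:minimizers} from the same local analysis that gives Theorem~\ref{thm:minimax}: the exact structure of an exceptional row of $F$, in terms of $A_F$ and, when $\mu\ne0$, $\chi_F$.

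\textbf{Step 1: local reduction.} Fix an exceptional direction $\alpha$ with $\ol\alpha=\rho$ and a residue $a$. Write $x=x_0+2y$ with $\ol{x_0}=a$, and expand $F(x+\alpha)-F(x)$ with the local expansion of Section~\ref{sec:prelim}.

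\emph{First digit.} Modulo $2$ the difference is the constant $p(a+\rho)+p(a)$. This value agrees on $a$ and $a+\rho$ and is otherwise injective in $a$, so a fixed $\beta$ meets at most the two residues $a,a+\rho$.

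\emph{Second digit.} By (H) we have $p'(a+\rho)=p'(a)$. Hence the linear term in $y$ cancels modulo $4$, and the next digit is governed by the map $y\mapsto \mu y^2+A_F(a)y$ from Lemma~\ref{lem:A}.

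\emph{Counting.} If $A_F(a)\ne0$ (or $\mu\ne0$ with $A_F(a)=0$), this map is two-to-one or bijective on $\F_q$. Hensel-type lifting of the higher digits then gives exactly $2q^{k-2}\cdot(\text{const})$ solutions per residue class. If $A_F(a)=0$ and $\mu=0$, the class does not split, and one more digit must be spent; this produces entries of size at least $q^2$ at $k\ge4$.

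\textbf{Step 2: the $\mu=0$ case.} With $A_F$ zero-free, each residue contributes $q^{k-1}$ solutions to the relevant $\beta$, or none. The two residues $a$ and $a+\rho$ both contribute to the same $\beta$ exactly when a compatibility condition holds. By the definition of $Z$ and Lemma~\ref{lem:E}, that condition is whether the constant terms agree to second order, namely $a\in Z$. Every row attains $q$; when $Z\ne\emptyset$ some $\beta$ collects both residues and reaches $2q$. These are exactly the minimum values in Theorem~\ref{thm:minimax}.

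Conversely, a zero of $A_F$ makes the map degenerate (identically zero when $\mu=0$). The solutions over that residue then concentrate on a single $\beta$ with multiplicity exceeding the optimum at every $k\ge3$. This gives the stated criterion for all $k\ge3$ when $\mu=0$.

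\textbf{Step 3: the $\mu\ne0$ case, $k\ge4$.}

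\emph{Where $A_F(a)\ne0$.} The map $y\mapsto\mu y^2+A_F(a)y$ is two-to-one, so it contributes factor-two clumps. For $a\notin Z$ this yields entries $2q$ and no more. For $a\in Z$ the images over $a$ and $a+\rho$ are the same hyperplane coset. They coincide or are disjoint according to whether $E_F(a)$ lies in the image $\mu y^2+A_F(a)y$. After the substitution $y=(A_F(a)/\mu)z$, this is the trace condition $\Tr(\mu E_F(a)/A_F(a)^2)=0$. So $\chi_F(a)=1$ is exactly the condition that the two contributions land on disjoint $\beta$-sets, keeping entries at $2q$; $\chi_F(a)=0$ doubles them to $4q$.

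\emph{Where $A_F(a)=0$.} The map is $y\mapsto\mu y^2$, a bijection. The class does not split at the second digit, and the next digit behaves like a linear map whose kernel is forced by $k\ge4$. This gives entries at least $4q$, so $A_F$ must be zero-free.

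\textbf{Step 4: $k=3$ with $\mu\ne0$.} There is no further digit, so the solution set is exactly $\{x_0+2y\}$ with $y$ solving a single quadratic equation over $\F_q$.

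\emph{$Z=\emptyset$.} Take $A_F\equiv0$. Then $y\mapsto\mu y^2$ is a bijection, so each $\beta$ gets $q$ solutions and the value $q$ is reached. If instead $A_F(a)\ne0$ somewhere, the two-to-one map forces an entry $2q$.

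\emph{$Z\ne\emptyset$.} Where $A_F(a)=0$ the bijective residues overlap harmlessly and give $2q$. Where $A_F(a)\ne0$ one needs $\chi_F(a)=1$ exactly as in Step 3, giving the stated condition.

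\textbf{Main obstacle.} The delicate point is the overlap analysis at $a\in Z$: showing that the shift between the solution sets over $a$ and $a+\rho$ is exactly $E_F(a)$, modulo the image of the quadratic map. I would first verify this by computing $F(x+\alpha)-F(x)$ at $x$ and $x+\tau$ to order $4$, using the second difference that defines $E_F$.
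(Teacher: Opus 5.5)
Your plan matches the paper's proof. The paper reads the maximum off the $k=3$ spectrum (Theorem~\ref{thm:S12}), the Hensel-stable spectrum for zero-free $A_F$ (Theorem~\ref{thm:S3}) and the lower bound of Theorem~\ref{thm:M1}(iii). Your treatment of the gate via $Z$, the shift $c_{a+\rho}=c_a+E_F(a)$ and the trace test for $\chi_F$ is right in substance. The gap is how you handle zeros of $A_F$ at $k\ge4$, and this is exactly what separates the $k=3$ and $k\ge4$ answers when $\mu\ne0$.

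In Step~1 you let Hensel lifting apply whenever $y\mapsto\mu y^2+A_F(a)y$ is bijective, including the case $\mu\ne0=A_F(a)$. That is false. In characteristic $2$ the residue polynomial $c_a+A_F(a)y+\mu y^2$ has constant derivative $A_F(a)$, so for $A_F(a)=0$ every residue root is a multiple root, whatever $\mu$ is. Taken literally, Step~1 would give $\Dexc=q$ at every level for a lift with $A_F\equiv0$ when $\mu\ne0$ and $Z=\emptyset$. That contradicts the theorem (compare Example~\ref{ex:crosslevel}). Step~3 asserts the opposite, but it misdescribes the mechanism: the class does split at that digit into $q$ subclasses, since $y\mapsto\mu y^2$ is bijective; what fails is unique lifting. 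The bound ``at least $4q$'' is also never derived, and Step~1 makes the same unsupported claim for $\mu=0$ (``one more digit must be spent''). This point, not the overlap shift, is the delicate one; the shift is immediate from the definition of $E_F$ with $\tau=\alpha$.

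The missing ingredient is the argument of Theorem~\ref{thm:M1}(iii), which covers both values of $\mu$. If $A_F(a)=0$, then $G'(x)\equiv0\pmod4$ on the class of $a$. For $h\in2^{k-2}R_k$ this gives $G(x+h)-G(x)=hG'(x)+\sum_{i\ge2}h^iG^{[i]}(x)=0$, because $hG'(x)\in2^kR_k=0$ and $h^2\in2^{2k-4}R_k=0$ for $k\ge4$. Hence $G$ is constant on the $q^2$-point cosets $x+2^{k-2}R_k$, and some entry is at least $q^2>4q$. In your digit language: the digit of $\Phi_a$ after $c_a+L_a(y)$ does not depend on the next digit of $z$, and pigeonhole over the remaining digits gives $q^2$. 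With this in place your argument closes.

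Also correct the counts:
\begin{itemize}
\item With $A_F$ zero-free, a class contributes $q$ solutions to each of the $q^{k-2}$ values of $\beta$ passing its gate, not $q^{k-1}$ to one $\beta$.
\item The ``$2q^{k-2}\cdot(\mathrm{const})$'' of Step~1 should be $q\cdot\#L_a^{-1}(c_a)\in\{0,q,2q\}$ per class and per $\beta$.
\item At $k=3$ each residue root $y$ accounts for $q$ solutions, because each $\ol z$ has $q$ lifts modulo $4$.
\end{itemize}
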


\begin{maintheorem}[Exceptional spectra and certification modulo $8$]\label{thm:spectrum}
Let $n_c$ be the number of $\beta$ with $\delta_F(\alpha,\beta)=c$ for a fixed exceptional $\alpha$.
\begin{enumerate}[label=(\arabic*)]
\item If $A_F$ has no zero, then $\delta_F(\alpha,\beta)=\delta_{F\bmod 8}(\alpha\bmod 8,\beta\bmod 8)$.
 For $\mu=0$ we get $n_q=(q-|Z|)q^{k-2}$ and $n_{2q}=\tfrac12|Z|q^{k-2}$.
 For $\mu\neq0$ we get $n_{2q}=\tfrac12(q-\nu)q^{k-2}$ and $n_{4q}=\tfrac14\nu q^{k-2}$,
 where $\nu=\#\{a\in Z:\chi_F(a)=0\}$.
\item If $k=3$ and $\mu\neq0$, then for arbitrary $A_F$
 \[ n_q=(s-z_0)q,\quad n_{2q}=\tfrac12(q-s+z_0-\nu)q,\quad n_{4q}=\tfrac14\nu q, \]
 with $s=\#A_F^{-1}(0)$, $z_0=\#(Z\cap A_F^{-1}(0))$ and $\nu=\#\{a\in Z: A_F(a)\neq0,\ \chi_F(a)=0\}$.
\end{enumerate}
All other positive values have multiplicity zero.
\end{maintheorem}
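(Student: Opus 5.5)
The plan is a local analysis along residue classes. We fix an exceptional direction $\alpha$ with $\ol\alpha=\rho$ and split the equation $F(x+\alpha)-F(x)=\beta$ over the residue $a=\ol x$. Reducing modulo $2$ gives $p(a+\rho)+p(a)=\ol\beta$. Since $p$ is quadratic with $p'=\varepsilon+K$, $\ker K=\{0,\rho\}$, the map $a\mapsto p(a+\rho)+p(a)$ is affine, and its linear part is $K$-like with kernel $\{0,\rho\}$. So every $\ol\beta$ in its image has exactly the two residues $a,a+\rho$ as solutions, which is the two-classes fact already cited.

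Within a class we write $x=\hat a+2y$ and expand $D(x)=F(x+\alpha)-F(x)$ $2$-adically, following the local expansion of Section~\ref{sec:prelim}. The digit-$1$ term is $2(F'(\hat a+\alpha)-F'(\hat a))y$, which is $4A_F(a)y$ modulo $8$ by Lemma~\ref{lem:A}. The next term contributes $4\mu y^2$ after reduction, because the second derivative of the difference reduces to $\mu$. Modulo $8$, the condition on $\ol y$ is therefore $\mu\ol y^2+A_F(a)\ol y=c$ for a constant $c$ determined by $\beta$ and by the second digit of $D(\hat a)$.

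When $A_F(a)\neq0$ the derivative of $D$ along the class has valuation exactly $1$, and a Hensel-type argument shows that $D$ restricted to $\hat a+2R_k$ has constant fibre size once the image modulo $8$ is fixed. Each solution modulo $8$ then lifts to exactly $q^{k-3}$ solutions at each higher level. This gives $\delta_F(\alpha,\beta)=\delta_{F\bmod8}$ up to this uniform scaling. I would state the scaling carefully, since counts are taken in $R_k$ and the displayed equality is read after normalizing by $q^{k-3}$ through the precomputed count. Part (1) thus reduces to counting at $k=3$.

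Counting at $k=3$ uses the class sizes. For $\mu=0$ the map $y\mapsto A_F(a)y$ is bijective, so each class gives $q$ solutions for each compatible $\beta$. Whether the classes $a$ and $a+\rho$ hit the same $\beta$ modulo $4$ is decided by the second digit of $D(\hat a)-D(\widehat{a+\rho})$. Computing $F(x+2\tau)-2F(x+\tau)+F(x)$ shows that this digit vanishes exactly when $a\in Z$. So for $a\in Z$ the two classes merge and give $2q$, while otherwise they give $q$ each. Summing over pairs $\{a,a+\rho\}$ and over the lifted $\beta$ yields $n_q=(q-|Z|)q^{k-2}$ and $n_{2q}=\tfrac12|Z|q^{k-2}$.

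For $\mu\neq0$ the map $y\mapsto\mu y^2+A_F(a)y$ is two-to-one onto a hyperplane, so each class gives $2q$ on half the values. When $a\in Z$ the two hyperplanes (affine cosets) are compared. After substituting $y=A_Fu/\mu$ they coincide precisely when $\Tr(\mu E_F(a)/A_F(a)^2)=0$, using Lemma~\ref{lem:E}. Coincidence produces entries $4q$ on a quarter of the relevant values, and otherwise the entries are $2q$. This yields $n_{2q}$ and $n_{4q}$.

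For part (2) we are at $k=3$ and allow $A_F(a)=0$. The class equation becomes $\mu\ol y^2=c$, which is bijective since squaring is bijective, so the class contributes $q$ to a single $\beta$ modulo $8$ per second digit. Such classes pair with their partner $a+\rho$. For $a\in Z\cap A_F^{-1}(0)$ the zero-$A_F$ class overlaps with the partner's coset; this needs a small case analysis and produces the $z_0$ corrections. The remaining classes are handled as in part (1).

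Finally, we check that the totals $\sum_c c\,n_c=q^k$ and the support sizes are consistent, which gives "all other values have multiplicity zero". I expect the main obstacle to be the $\chi_F$ step, namely identifying the offset between the two affine hyperplanes exactly with $E_F(a)$ including the correct second-order cross terms, together with the $z_0$ bookkeeping in part (2).
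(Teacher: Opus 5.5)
Your $k=3$ analysis matches the paper's, and your remark that the fibre size over $\beta$ depends only on $\beta\bmod 8$ is correct. The matching parts are the gate digit decided by $\Lambda(a)$, the class equation $\mu y^2+A_F(a)y=c_a$, the offset $c_{a+\rho}=c_a+E_F(a)$ for pairs in $Z$, and the trace test for whether the two images coincide.

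The gap is the next step. You claim that each solution modulo $8$ lifts to exactly $q^{k-3}$ solutions, so that the identity in (1) holds only ``after normalizing by $q^{k-3}$''. That is false: $\delta_F(\alpha,\beta)=\delta_{F\bmod 8}(\alpha\bmod 8,\beta\bmod 8)$ holds literally. Your scaled version would make the entries $q^{k-2},2q^{k-2},4q^{k-2}$, not the values $q,2q,4q$ whose multiplicities you then state. Since each $\beta\bmod 8$ has $q^{k-3}$ lifts, a row would then sum to $q^{2k-3}$ rather than $q^k$. Your own closing check $\sum_c c\,n_c=q^k$ would fail.

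The correct count follows Lemma~\ref{lem:local}(i). For $\beta$ passing the gate, write the class of $a$ as $x_a+2z$ with $z$ modulo $2^{k-1}$. Then $G(x_a+2z)=4\Phi_a(z)$ vanishes if and only if $\Phi_a(z)\equiv0\pmod{2^{k-2}}$, which is a condition on $z$ modulo $2^{k-2}$ only.

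\begin{itemize}
\item The residue $c_a+A_F(a)y+\mu y^2$ of $\Phi_a$ has constant derivative $A_F(a)\ne0$ (characteristic $2$). So every residue root is simple, and Hensel over $R_{k-2}$ gives exactly one $z$ modulo $2^{k-2}$ per residue root.
\item Only the top digit of $z$ stays free. This gives $q$ solutions per root at every level, exactly as at $k=3$.
\item The number of roots depends on $c_a$ and $c_a+E_F(a)$, hence only on $\beta\bmod 8$. Also $A_F$ and $E_F$ depend only on $F\bmod 8$.
\end{itemize}

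The factor $q^{k-3}$ therefore enters only the multiplicities $n_c$, through the lifts of $\beta\bmod 8$, and never the entries. With this repair the rest of your plan works.

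In (2), the $z_0$ bookkeeping needs no real case analysis once you use that $A_F$ is $\rho$-periodic (Lemma~\ref{lem:A}(iii)). Both classes of a pair in $Z\cap A_F^{-1}(0)$ then have bijective $L_a$ and the same gate value. So each such pair gives $q$ values of $\beta$ with entry $2q$.
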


\begin{maincorollary}[Optimality across levels]\label{cor:crosslevel}
The criteria in Theorem~\ref{thm:minimizers} determine optimality at every higher level from the
coefficient data modulo $8$. A minimizer at level $k=3$ (that is, modulo $8$) remains optimal at all higher levels if and only if its
function $A_F$ is nowhere zero. Consequently, a compatible family minimizing the exceptional uniformity at
every level $k\ge3$ exists if and only if $\mu=0$ or $Z$ is nonempty.
\end{maincorollary}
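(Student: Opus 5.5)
The corollary follows by combining Theorems~\ref{thm:minimax}, \ref{thm:minimizers} and~\ref{thm:spectrum}(1). The plan is first to show that all the quantities entering the criteria depend only on $F\bmod 8$, then to compare the minimum at $k=3$ with the minimum at $k\ge4$, and finally to handle compatible families.

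\emph{Step 1: the criteria depend only on $F\bmod 8$.} By definition $A_F(a)$ is the residue of $\tfrac12(F'(x+\tau)-F'(x))$, so it depends only on $F'\bmod 4$, hence on $F\bmod 8$. Likewise $E_F(a)$ is the residue of $\tfrac14$ times a second difference, so it depends only on $F\bmod 8$. The invariants $\mu$ and $Z$ depend only on $p$. Hence $\chi_F$ and the whole criterion of Theorem~\ref{thm:minimizers} for level $k$ are functions of $F\bmod 8$. Reducing $F$ from level $k$ to level $3$ does not change $A_F$, $E_F$ or $\chi_F$. This uses the well-definedness Lemmas~\ref{lem:A} and~\ref{lem:E}, applied at both levels to the same representatives.

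\emph{Step 2: when a level-$3$ minimizer stays optimal.} Let $G$ be optimal at $k=3$ and let $F$ be any lift at level $k\ge4$ with $F\equiv G\pmod 8$. We compare the table of Theorem~\ref{thm:minimax} with the $k=3$ part of Theorem~\ref{thm:minimizers}.

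If $\mu=0$, optimality at both levels means exactly that $A_F$ has no zero, so the claim holds.

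If $\mu\ne0$ and $Z=\emptyset$, the $k=3$ minimizers are those with $A_G\equiv0$. Such a $G$ has zeros, and it is not optimal at level $k\ge4$, which requires $A_F$ zero-free. In this case no minimizer has zero-free $A$, so the equivalence holds vacuously in both directions.

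If $\mu\ne0$ and $Z\ne\emptyset$, a $k=3$ minimizer satisfies $\chi=1$ on $Z\cap\{A\ne0\}$. If moreover $A$ has no zero, this is $\chi\equiv1$ on $Z$, which is the level-$k\ge4$ criterion. Conversely, optimality at $k\ge4$ forces $A$ to be zero-free.

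This gives the ``if and only if''. As an alternative check in the zero-free case, Theorem~\ref{thm:spectrum}(1) gives $\delta_F=\delta_{F\bmod 8}$ directly, so $\Dexc$ is level-independent and equals the minimum, which Theorem~\ref{thm:minimax} shows is the same at $k=3$ and $k\ge4$ whenever $\mu=0$ or $Z\ne\emptyset$.

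\emph{Step 3: compatible families.} A compatible family $(F_k)_{k\ge3}$, with $F_{k+1}\equiv F_k$ modulo $2^k$, that is optimal at every level has a single mod-$8$ datum $G=F_3$. By Step~2 this forces $A_G$ to be zero-free, which is impossible when $\mu\ne0$ and $Z=\emptyset$, since there the $k=3$ minimizers have $A\equiv0$.

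Conversely, if $\mu=0$ or $Z\ne\emptyset$, Theorem~\ref{thm:minimizers} together with Theorem~\ref{thm:minimax} gives a lift $F$ at some level $k\ge4$ with $A_F$ zero-free, and $\chi_F\equiv1$ on $Z$ when $\mu\ne0$. Its reduction $F\bmod 8$ is then optimal at $k=3$, since the criterion is the same data. Taking any coefficient lift of $F\bmod 8$ to every level, for instance by lifting digits with zeros, gives a compatible family whose members all share the mod-$8$ data, hence all are optimal by Step~1.

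\emph{Main obstacle.} The step I expect to need care is the existence of a zero-free $A_F$ (with $\chi\equiv1$ on $Z$) in the $\mu\ne0$, $Z\ne\emptyset$ case. The plan is to take it from the attainment part of Theorem~\ref{thm:minimax}, i.e.\ from the prescribability results of Section~\ref{sec:invariants}, rather than reprove it here.
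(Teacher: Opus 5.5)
Your proposal is correct and follows essentially the paper's route: Step~1 is Lemma~\ref{lem:CL0}, which applies the reduction $R_k\to R_3$ to the representatives defining $A_F$, $E_F$ and $\chi_F$; Step~2 is the case analysis of Theorem~\ref{thm:CL}; and Step~3 is Corollary~\ref{cor:CL2}. The differences are cosmetic: you obtain the level-$3$ seed by reducing an optimal level-$k$ lift instead of building $F_3$ directly with $A\equiv1$ and $\chi\equiv1$ on $Z$, and in the case $\mu\ne0$, $Z=\emptyset$ the equivalence holds because both sides are false, not ``vacuously''.
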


\begin{maintheorem}[Penultimate layer and compatibility]\label{thm:layers}
Let $s=\#A_F^{-1}(0)$. For $k\ge4$ and $\alpha\in 2^{k-2}R_k\setminus 2^{k-1}R_k$, every row has $q-s$ entries $q^{k-1}$ and $s/2$
entries $2q^{k-1}$. Optimality of the $k=3$ middle layer $2R_3\setminus4R_3$ (all nonzero entries equal to $q^2$) requires $A_F\equiv0$ when $\mu\neq0$,
which forces $\Dexc\ge q^2$ at every level $k\ge4$.
When $\mu=0$ a zero-free $A_F$ optimizes all three at once.
\end{maintheorem}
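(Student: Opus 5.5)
We would prove Theorem~\ref{thm:layers} by the same local expansion along residue classes used for the exceptional directions, now with $\alpha=2^{k-2}\gamma$, $\gamma$ a unit. Put $x=x_0+y$ with $\ol{x_0}=a$ fixed. Since $\alpha^2=2^{2k-4}\gamma^2$ vanishes for $k\ge4$, Taylor expansion gives
\[
F(x+\alpha)-F(x)=2^{k-2}\gamma F'(x)+2^{2k-5}\gamma^2F''(x)+\cdots .
\]
Here the terms with $\alpha^j/j!$ for $j\ge2$ have valuation at least $k$ once $k\ge4$. The divided-derivative terms need care, but $2^{2k-4}/2$ has valuation $2k-5\ge k-1$ and only contributes in the top digit. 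So the difference is $2^{k-2}\gamma F'(x)+2^{k-1}c(a)$ for some $c(a)$ depending on the residue only, and it depends on $x$ only modulo $4$, via $F'(x)\bmod 4$.

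Write $F'(x_0+2z)\equiv F'(x_0)+2zF''(x_0)\pmod 4$. In characteristic $2$, $\ol{F''}=\ol{p''}=0$ for the quadratic $p$, so the $z$-dependence at digit $2$ comes from $\tfrac12$-divided pieces. We expect it to be governed exactly by $A_F$: $\tfrac12(F'(x+\tau)-F'(x))$ measures the first-order change of $F'$ along $\rho$.

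The value mod $2^{k-1}$ is $2^{k-2}\gamma p'(a)$. Since $p'$ is two-to-one with fibres $\{a,a+\rho\}$, each admissible top-two-digit value $\beta$ pulls back to a pair of residue classes. Within that pair the next digit is an affine function of the lift data, and the solution count is either $q^{k-1}$ (one residue class of $q^{k-1}$ points) or $2q^{k-1}$ (both classes). The key computation is to show that both classes land on the same $\beta$ exactly when $A_F(a)=0$, and otherwise on two distinct values.

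Counting then gives the row. The $q/2$ fibres $\{a,a+\rho\}$ with $A_F(a)\neq0$ contribute two entries $q^{k-1}$ each, and the $s/2$ fibres with $A_F\equiv0$ contribute one entry $2q^{k-1}$ each. Since $A_F(a+\rho)=A_F(a)$ up to sign by Lemma~\ref{lem:A}, this yields $q-s$ entries $q^{k-1}$ and $s/2$ entries $2q^{k-1}$, matching the claim. The main obstacle is this identification of the digit mismatch between $a$ and $a+\rho$ with $A_F(a)$; we would derive it directly from the definition of $A_F$ with $\tau$ the lift of $\rho$ realizing the difference between the two residue classes.

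For compatibility at $k=3$, the middle layer $2R_3\setminus4R_3$ is the penultimate layer. The same count, adjusted at $k=3$ where $\alpha^2/2$ now enters with a $\mu$-dependent quadratic term, shows that all nonzero entries equal $q^2$ exactly under a condition on $A_F$. When $\mu\neq0$ the extra term must be compensated, and we expect the condition to become $A_F\equiv0$.

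The lower bound then follows from Theorem~\ref{thm:spectrum} and the local analysis in Section~\ref{sec:spectra}. With $A_F\equiv0$ at $k\ge4$, the restriction to a residue class along an exceptional direction degenerates: the quadratic map $y\mapsto\mu y^2$ loses the linear term, so the solutions over a class do not split. This gives an entry of at least $q\cdot q^{k-3}\cdot q$, which we expect to be $\ge q^2$ from the layer-by-layer count.

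When $\mu=0$, a zero-free $A_F$ is optimal for the exceptional directions at every level by Theorem~\ref{thm:minimizers}. It gives $s=0$ in the penultimate layer, and in the $\mu=0$ case the $k=3$ middle-layer condition also reduces to $s=0$. So all three are optimized simultaneously.
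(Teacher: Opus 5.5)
Your penultimate-layer count follows the paper's route. The value depends only on the residue, only the two classes of a fibre $\{a,a+\rho\}$ can merge, and they merge iff $A_F(a)=0$. However, you leave an unresolved term that would break the argument. With Hasse derivatives the expansion $F(x+\alpha)=\sum_iF^{[i]}(x)\alpha^i$ is exact with $F^{[i]}\in R_k[X]$, and $\alpha^2=0$ for $k\ge4$. So $F(x+\alpha)-F(x)=\alpha F'(x)$ exactly. Your $F''$-term is $2^{2k-4}\gamma^2F^{[2]}(x)=0$, because $F''=2F^{[2]}$. You must say this: a residue-dependent top digit $2^{k-1}c(a)$ would change the merging criterion to $\ol\gamma A_F(a)=c(a)+c(a+\rho)$. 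Likewise there is no $z$-dependence inside a class to be ``governed by $A_F$'': $F'(x+2z)\equiv F'(x)\pmod 4$ by Lemma~\ref{lem:A}(i). What you call the main obstacle is one line, $\alpha F'(x_a+\tau)-\alpha F'(x_a)=2^{k-1}\gamma\,[A_F(a)]$, which vanishes iff $A_F(a)=0$.

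The compatibility part has two genuine gaps.

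First, the $k=3$ condition is only ``expected''. You need three facts: $F(x+2u)-F(x)\equiv2uF'(x)+4u^2F^{[2]}(x)\pmod 8$ depends only on $\ol x$; it is $2\ol u\,p'(a)$ modulo $4$; and the two classes of a fibre differ by $4(\ol uA_F(a)+\ol u^2\mu)$ modulo $8$. Optimality of the layer means this is nonzero for every $a$ and every $\ol u\in\F_q^*$. When $\mu\ne0$ and $A_F(a)\neq0$, the row $\ol u=A_F(a)/\mu$ kills it. It is the quantifier over all rows of the layer, not a ``compensation'', that forces $A_F\equiv0$.

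Second, your lower-bound argument fails, for three reasons:
\begin{itemize}
\item Theorem~\ref{thm:spectrum} does not apply: part (1) needs $A_F$ zero-free, and part (2) is only for $k=3$.
\item The claimed entry $q\cdot q^{k-3}\cdot q=q^{k-1}$ is false already at $k=4$. In Example~\ref{ex:crosslevel}, with $A\equiv0$ over $\GR(16,5)$, the sampled exceptional rows have all nonzero entries equal to $q^2=1024$, not $q^3$.
\item The mechanism ``$y\mapsto\mu y^2$ loses its linear term'' is equally present at $k=3$, where each class then contributes only $q$ (Lemma~\ref{lem:local}(ii)), the optimum. So it cannot be what produces $q^2$.
\end{itemize}
The missing idea is Theorem~\ref{thm:M1}(iii). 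If $A_F(a)=0$, then $G_\alpha'(x)=F'(x+\alpha)-F'(x)\in4R_k$ for $\ol x=a$. For $h\in2^{k-2}R_k$ one has $hG_\alpha'(x)\in2^kR_k=0$ and $h^i=0$ for $i\ge2$; this is exactly where $k\ge4$ enters. Hence $G_\alpha$ is constant on the $q^2$ points of $x+2^{k-2}R_k$. This needs only one zero of $A_F$ and nothing about $\mu$. The passage from the level-$3$ condition to level $k$ is $A_{F_k}=A_{F_3}$ (Lemma~\ref{lem:CL0}).
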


\begin{maincorollary}[Transition probabilities]\label{cor:crypto}
For $F^\sharp$ the maximal transition probability from a unit input difference is $2/q^{k-1}$, a factor
$q^{k-2}$ below the general bound of Rønjom and Sandrib. Differences keep their $2$-adic valuation.
Consider $N$ rounds $x\mapsto M\cdot S(x)+\kappa_i$ on $R_k^n$, where $S$ applies a lift $F$ of $p$ to each word and
$M$ is an additive automorphism of $R_k^n$ whose reduction $\ol M$ has differential branch number $B$. Take the
starting state and the round keys uniform and all independent, with the keys added in $R_k$.
\begin{enumerate}[label=(\alph*)]
\item If $F=F^\sharp$, a fixed characteristic whose input difference has a unit component has probability at most
$(2/q^{k-1})^{B\lfloor N/2\rfloor}$.
\item If $F=F^\sharp$ with $r\ge5$ and the input difference lies in $(2^sR_k)^n$ with a word outside $2^{s+1}R_k$,
$1\le s\le k-2$, the bound is $q^{(s+1-k)B\lfloor N/2\rfloor}$. For $s=k-1$ it is $(2/q)^{B\lfloor N/2\rfloor}$ for
every lift.
\item For every lift, every $N\ge2$ and every pair $(a,b)$ with $a\ne0$, the differential $a\to b$ has probability at
most $(2/q)^{B-1}$. If $F=F^\sharp$ and $a$ has a unit component, it has probability at most $(2/q^{k-1})^{B-1}$; if
$F=F^\sharp$ with $r\ge5$ and $a$ is in layer $s$ as in (b), $1\le s\le k-2$, at most $q^{(s+1-k)(B-1)}$.
\item For each such two-round differential with bound $\varepsilon$ from (c) and every $t>0$, at most a proportion
$\varepsilon/t$ of the key pairs gives it a fixed-key probability $\ge t$.
\end{enumerate}
\end{maincorollary}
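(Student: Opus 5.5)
The corollary collects several consequences, and I will treat them in the order stated. The first sentence follows from Theorem~\ref{thm:oneterm}: every unit row of $F^\sharp$ has entries at most $2q$ (exceptional rows exactly $2q$, ordinary rows at most $2$). Dividing by $|R_k|=q^k$ gives the transition probability $2/q^{k-1}$, which is $q^{k-2}$ times smaller than $2q^{k-1}\cdot q^{-k}=2/q$. Valuation preservation is Proposition~\ref{prop:valuation}. For the round structure I will use three facts:
\begin{itemize}
\item $M$ is additive, so it maps differences deterministically;
\item key addition in $R_k$ leaves differences unchanged;
\item under a uniform starting state and independent uniform keys, the inputs to the S-layer of each round are uniform and independent of the earlier rounds.
\end{itemize}
The standard Markov-cipher argument then writes the probability of a characteristic as the product of the wordwise transition probabilities $\delta_F(\alpha_j,\beta_j)/q^k$. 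This is the content of Proposition~\ref{prop:layerchar}, which I invoke.

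For (a) and (b) I run the wide-trail count. Since $F$ preserves valuation wordwise and $M$ is additive, I reduce modulo $2$ after dividing a characteristic's differences by $2^s$. Here $s$ is the minimal valuation among the words. The set of words attaining this minimal valuation is then the support of a nonzero difference over $\F_q$, propagated through $\ol M$, and I need to check that $M$ commutes with the division by $2^s$ modulo $2$.

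With this reduction, in each pair of consecutive rounds at least $B$ words of minimal valuation $s$ are active. Each such word contributes a factor equal to the layer-$s$ bound:
\begin{itemize}
\item $2/q^{k-1}$ for units;
\item $q^{s+1-k}$ for $1\le s\le k-2$, taken from the layer analysis of Section~\ref{sec:layers} for $r\ge5$ (entries $q^{s+1}$, giving probability $q^{s+1}/q^k$);
\item $2q^{k-1}/q^k=2/q$ on the top layer, for every lift.
\end{itemize}
Every other active word contributes a factor at most $1$. Grouping the $N$ rounds into $\lfloor N/2\rfloor$ disjoint pairs gives the stated exponents.

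The one subtle point is that the minimal valuation is the same in every round, because $M$ and $F$ preserve it. The layer fixed by the input therefore persists, and the layer bound applies in every pair.

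For (c), I bound a differential $a\to b$ over $N\ge2$ rounds by conditioning on the first two rounds. After the first S-layer and $M$, the difference entering round $2$ is $M\beta$, where $\beta$ is the output of round $1$. I sum over $\beta$ the probability of the transition $a\to\beta$ times the probability that round $2$ lands correctly. The later rounds only decrease the probability, so I bound them by $1$ after fixing the output of round $2$; equivalently, I take the maximum over the terminal data.

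The key inequality is that the total weight of active words in rounds $1$ and $2$ is at least $B$. I use one active word of round $1$ to absorb the sum over $\beta$, since the probabilities over $\beta$ sum to $1$ for each word. The remaining at least $B-1$ active words of minimal valuation each contribute a maximum entry bound. The main obstacle is making this precise without the factor $2^n$ that would come from summing over activity patterns. My plan is to fix the output difference of round $2$ and use the bound $\max_\beta\delta/q^k$ on the round-$2$ words. On the round-$1$ words outside one chosen word I also use max bounds, and on the chosen word I use the row sum. The branch number then guarantees at least $B-1$ max factors, each taken at the common minimal valuation. This is Propositions~\ref{prop:hull} and~\ref{prop:hullunit}.

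Part (d) is Markov's inequality applied to the random variable given by the fixed-key probability. Its expectation over the key pairs equals the averaged differential probability, which is at most $\varepsilon$ by (c); this is Proposition~\ref{prop:fixedkey}.
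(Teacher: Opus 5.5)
The opening claims and parts (a), (b) and (d) follow the paper. The product formula is Proposition~\ref{prop:markov}; you attribute it to Proposition~\ref{prop:layerchar}, which is the layer-$s$ count. The branch-number count on the layer-$s$ digit vectors, which stay nonzero because $F$ preserves valuations and $\ol M$ is invertible, is exactly the paper's argument.

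The gap is in (c). Citing Propositions~\ref{prop:hull} and~\ref{prop:hullunit} would close it, but the mechanism you describe for them fails.

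\textbf{The two-round bound.} You bound every factor by the maximum except the round-$1$ factor at one word $i_0$, which you absorb by its row sum. But the sum runs over the whole intermediate difference $x$. Let $I,J$ be the supports at the two ends. Two contributing $x$ with the same $x_{i_0}$ differ by some $e$ with $\mathrm{wt}(e)+\mathrm{wt}(Me)\le|I|+|J|-1$. The branch number forces $e=0$ only when $|I|+|J|=B$. In general, for instance with an MDS layer and all words active at both ends, many $x$ share the value $x_{i_0}$, and your bound picks up that multiplicity.

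The missing idea is to bound exactly $B-1$ factors by the maximum, indexed by $I'\subseteq U$ and $J'\subseteq V$ with $|I'|+|J'|=B-1$. Then $x\mapsto(x_{I\setminus I'},(Mx)_{J\setminus J'})$ is injective on contributing $x$, by the branch number transferred from $\ol M$ to $R_k^n$. The remaining factors are then bounded by the row sums on $I\setminus I'$ times the column sums $\sum_u\delta_F(u,c_j)/q^k=1$ on $J\setminus J'$. The column sums hold because $F$ is a permutation; they are indispensable and do not appear in your plan.

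The every-lift bound also needs $\delta_F(\alpha,\beta)\le2q^{k-1}$ for \emph{every} nonzero $\alpha$, because the solutions lie in at most two residue classes. You only quote this on the top layer.

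\textbf{From two to $N$ rounds.} This step fails as written. After conditioning on the difference $c$ after two rounds, bounding the later rounds by $1$ yields only $\sum_c\Pr[\Delta_2=c]=1$. The paper instead conditions on the difference $c'$ after $N-2$ rounds. By the Markov argument, the state entering the last two rounds is uniform and independent of $c'$ and of the last two keys. Moreover $c'\ne0$, and $c'$ has the same layer $s$ as $a$. So the $N$-round probability is an average of two-round probabilities, each within the bound.

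Conditioning on the first two rounds can be rescued only by proving $\sum_c\Pr[\Delta_N=b\mid\Delta_2=c]=1$, i.e.\ that the keyed $(N-2)$-round transition matrix is column-stochastic. It is, since each keyed round is a permutation, but ``later rounds only decrease the probability'' does not establish this.
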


\section{Preliminaries}\label{sec:prelim}

\subsection{Rings, lifts and direction classes}
Let $f\in\F_2[t]$ be irreducible of degree $m$ and $\tilde f$ its $0/1$ lift. Then
$R_k=(\Z/2^k)[t]/(\tilde f)$ and $\F_q=\F_2[t]/(f)$, with reduction $x\mapsto\ol x$. The \emph{natural lift} of
$p\in\F_q[X]$ replaces each coefficient by the $0/1$ lift of its coordinates in the basis $1,t,\dots,t^{m-1}$; it
depends on $f$. A nonzero
$\alpha\in R_k$ is \emph{ordinary} if $\ol\alpha\notin\{0,\rho\}$, \emph{exceptional} if $\ol\alpha=\rho$, and
lies in the layer $v_j$ if $\alpha\in2^jR_k\setminus2^{j+1}R_k$ ($1\le j\le k-1$). The layer $v_{k-1}$ is the
top layer and $v_{k-2}$ the penultimate layer. We write $\Delta_{\mathrm{cls}}(F)$ for the maximum of
$\delta_F(\alpha,\beta)$ over $\alpha$ in a class and all $\beta$, and $\Dunit=\max(\Delta_{\mathrm{ord}},\Dexc)$.

For $F\in R_k[X]$ we use Hasse derivatives $F^{[i]}$, defined by $F(x+h)=\sum_{i\ge0}F^{[i]}(x)h^i$ exactly in
$R_k$. Then $F^{[1]}=F'$ and $2F^{[2]}=F''$. For a function $g$ on $\F_q$ we write $D_\rho g(a)=g(a+\rho)+g(a)$.
Division by $2$ or $4$ is applied only after the numerator has been shown to lie in $2R_k$ or $4R_k$, and only
its residue is used.

\subsection{The hypothesis (H)}
\begin{definition}
A polynomial $p\in\F_q[X]$ satisfies (H) if it is reduced ($\deg p<q$), quadratic (only Dembowski--Ostrom
monomials $X^{2^i+2^j}$, linear monomials $X^{2^i}$ and a constant), an APN permutation of $\F_q$, and its
formal derivative is $p'=\varepsilon+K$ with $K$ linearized, $\ker K=\{0,\rho\}$, $\rho\ne0$ and
$\varepsilon\notin K(\F_q)$.
\end{definition}
For $m=3$ no polynomial satisfies (H): an exhaustive search over the reduced quadratic APN permutations of $\F_8$ finds
none with $\varepsilon\notin K(\F_8)$ and $\#\ker K=2$. This is why we assume $m\ge5$. Under (H), $p'$ has no zero and is
exactly two-to-one: $p'(a)=p'(b)$ if and only if $b\in\{a,a+\rho\}$. Every lift
of $p$ is a permutation of $R_k$, because its residue is a permutation and its derivative takes unit
values~\cite[Thm.~2]{RS},~\cite[Thm.~2.1]{BS}. Put $\Lambda(a)=p(a+\rho)+p(a)+\rho p'(a)$, so that
$Z=\Lambda^{-1}(0)$, and $z=|Z|$.

\begin{lemma}\label{lem:F0}
\begin{enumerate}[label=(\roman*)]
\item $p^{[2]}$ is affine, so $p^{[2]}(a+\rho)+p^{[2]}(a)=\mu$ for all $a$.
\item $\Lambda$ is $\rho$-periodic, so $Z$ is a union of pairs $\{a,a+\rho\}$.
\item $a\mapsto p(a+\rho)+p(a)$ is $\rho$-periodic and exactly two-to-one.
\end{enumerate}
\end{lemma}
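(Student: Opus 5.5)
We work directly with the quadratic structure of $p$, treating the three parts in order; (i) feeds into (ii) and (iii).

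\emph{Part (i).} Write $p=c+L+\sum_{i<j}c_{ij}X^{2^i+2^j}$ with $L$ linearized; for this we use only that $p$ is quadratic in the sense of (H). The Hasse derivative of a monomial is $(X^n)^{[2]}=\binom n2X^{n-2}$. Linear monomials $X^{2^i}$ with $2^i\ge2$ have $\binom{2^i}{2}$ even, except $X^2$, which gives $1$. For $n=2^i+2^j$ with $i<j$, Lucas' theorem gives $\binom n2\equiv1\pmod2$ only when $i=1$ (then $n-2=2^j$) or $i=0,j=1$ (then $n-2=1$). So $p^{[2]}$ is a constant plus a linear combination of $X^{2^j}$, i.e.\ affine over $\F_2$. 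Hence $p^{[2]}(a+\rho)+p^{[2]}(a)=p^{[2]}(\rho)+p^{[2]}(0)$ is independent of $a$.

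It remains to identify this constant with $\mu$. Expand $p(X+\rho)+p(X)=\sum_{i\ge1}p^{[i]}(X)\rho^i$. The $X^2$-coefficient comes from the terms $X^{2^i}\rho^{2^j}+X^{2^j}\rho^{2^i}$ with $2^i=2$ or $2^j=2$. Matching it with the computation of $p^{[2]}(\rho)+p^{[2]}(0)$ above is a bookkeeping step. The cleaner way is to note that for $p$ quadratic both sides equal $\sum_{j}c_{1j}\rho^{2^j}+c_{0,1}\rho$ (plus the contribution of $c_{1j}$ with $j<1$). I expect this identification to be the only fiddly point.

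\emph{Part (ii).} Since $p$ is quadratic, $D_\rho p(a)=p(a+\rho)+p(a)$ is affine in $a$. Its linear part is $B(a,\rho)$, the symmetric bilinear form $B(x,y)=p(x+y)+p(x)+p(y)+p(0)$. The formal derivative satisfies $p'(a)\rho=\sum_i(\text{coefficient})\dots$; more usefully, $p'(a)=\varepsilon+K(a)$ with $K$ linear. Then
\[
\Lambda(a+\rho)+\Lambda(a)=D_\rho p(a+\rho)+D_\rho p(a)+\rho K(\rho)=D_\rho D_\rho p(a)+\rho K(\rho).
\]
Here $D_\rho D_\rho p=0$ in characteristic $2$, and $K(\rho)=0$ because $\rho\in\ker K$. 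So $\Lambda$ is $\rho$-periodic, and $Z=\Lambda^{-1}(0)$ is a union of cosets $\{a,a+\rho\}$.

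\emph{Part (iii).} The map $g=D_\rho p$ is $\rho$-periodic because $g(a+\rho)+g(a)=D_\rho D_\rho p(a)=0$. Being affine, $g(a)=B(a,\rho)+p(\rho)+p(0)$, so $g$ is exactly $\#\ker$-to-one on its image, where $\ker=\{a:B(a,\rho)=0\}$ contains $\{0,\rho\}$. The APN property of $p$ gives $\#\{a: g(a)=g(b)\}\le2$ for each $b$. Since $\{0,\rho\}$ already lies in the kernel, the kernel is exactly $\{0,\rho\}$, and $g$ is exactly two-to-one.
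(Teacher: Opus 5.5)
Your proof is correct and follows the paper's argument: (i) uses the parity of $\binom e2$ to get $p^{[2]}=c_2+c_3X+\sum_{j\ge2}c_{2+2^j}X^{2^j}$, (ii) uses $K(\rho)=0$, and (iii) combines periodicity with the APN bound. In (iii) the bilinear-form language is not needed, since the paper simply notes that $a\ne a+\rho$ are two preimages and APN allows at most two. In (i) you also carry out the identification with $\mu$, which the paper leaves implicit, and it is right: both sides equal $c_{01}\rho+\sum_{j\ge2}c_{1j}\rho^{2^j}$, with no separate ``$j<1$'' term, because $c_{10}$ is the same coefficient as $c_{01}$.
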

\begin{proof}
(i) The second Hasse coefficient of $X^e$ is $\binom e2$. For $e=2^i+2^j$ with $i<j$ it is odd only for
$e=3$ or $i=1$, and for $e=2^i$ only for $e=2$. Hence $p^{[2]}=c_2+c_3X+\sum_{j\ge2}c_{2+2^j}X^{2^j}$ is affine.
(ii) $\Lambda(a+\rho)=p(a)+p(a+\rho)+\rho p'(a+\rho)=\Lambda(a)$ because $p'(a+\rho)=p'(a)$.
(iii) Periodicity is clear. Since $p$ is APN every value has at most two preimages, and periodicity gives
exactly the two preimages $a$ and $a+\rho$.
\end{proof}

\begin{lemma}[Ordinary directions]\label{lem:ordinary}
If $\alpha$ is ordinary, then $\delta_F(\alpha,\beta)\in\{0,2\}$.
\end{lemma}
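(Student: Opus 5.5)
We need to show that if $\ol\alpha\notin\{0,\rho\}$, then every $\beta$ has either $0$ or $2$ solutions $x\in R_k$ of $F(x+\alpha)-F(x)=\beta$. The plan has three steps: fix the residue first, then lift uniquely by Hensel, then count.

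\emph{Step 1: residues.} Reducing the equation modulo $2$ gives $p(a+\ol\alpha)+p(a)=\ol\beta$ with $a=\ol x$. Since $p$ is APN and $\ol\alpha\ne0$, this has either no solution or exactly two solutions $a$ and $a+\ol\alpha$. Both candidate residue classes come from the same unordered pair, so it suffices to show that each residue class $a$ solving the residue equation contains exactly one solution $x\in R_k$.

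\emph{Step 2: Hensel lifting inside one class.} Write $G(x)=F(x+\alpha)-F(x)-\beta$, a polynomial over $R_k$. Its derivative reduces to
\[
\ol{G'(x)}=p'(a+\ol\alpha)+p'(a).
\]
By (H), $p'(a)=p'(b)$ holds exactly when $b\in\{a,a+\rho\}$. Since $\ol\alpha\notin\{0,\rho\}$, the value $\ol{G'(x)}$ is nonzero, so $G'(x)$ is a unit for every $x$ in the class.

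Now run Hensel's lemma for Galois rings. Over a fixed residue class, write $x=x_0+2y$. Then
\[
G(x_0+2y)=G(x_0)+2yG'(x_0)+4(\cdots).
\]
Suppose we have determined $x$ modulo $2^j$ with $G(x)\equiv0\pmod{2^j}$. The condition modulo $2^{j+1}$ is linear in the next digit, with unit coefficient $\ol{G'}$, so the next digit is determined uniquely. Equivalently, $G$ restricted to $a+2R_k$ is injective modulo each power of $2$. It follows that each residue solution $a$ lifts to exactly one $x\in R_k$.

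The argument must be written cleanly in terms of the Hasse expansion, $G(x+h)=G(x)+hG'(x)+h^2(\cdots)$ exactly in $R_k$. Then $G(x+h)-G(x)=h\cdot(\text{unit})$ whenever $h\in2R_k$, because $h^2(\cdots)\in hR_k\cdot 2R_k$. This shows that $G$ restricted to a residue class is a bijection onto $G(x_0)+2R_k$.

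\emph{Step 3: counting.} Since $G(x_0)\in 2R_k$ exactly when $a$ solves the residue equation, each such class contributes exactly one solution. With zero or two residue solutions, this gives $\delta_F(\alpha,\beta)\in\{0,2\}$.

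The only delicate point is the bijection claim in Step 2. Multiplication by a unit is bijective on $2R_k$, but the higher-order term depends on $h$. To handle this, I would argue by injectivity plus finiteness. If $G(x_0+h)=G(x_0+h')$ with $h,h'\in2R_k$, then $(h-h')u=0$ for a unit $u$, obtained from the expansion around $x_0+h'$. Hence $h=h'$, so $G$ is injective on the finite set $x_0+2R_k$, and therefore bijective onto $G(x_0)+2R_k$.
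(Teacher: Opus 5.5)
Your proof is correct and follows the paper's argument. The APN property limits the residue roots to one pair $\{a,a+\ol\alpha\}$, hypothesis (H) makes $\ol{G'}=p'(a+\ol\alpha)+p'(a)=K(\ol\alpha)$ nonzero, and Hensel's lemma lifts each simple residue root to exactly one solution; your injectivity-and-counting argument on $x_0+2R_k$ spells out that Hensel step, and your remark that the residue roots come in pairs makes explicit why the count is $0$ or $2$ rather than merely at most $2$.
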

\begin{proof}
The residue $\ol G=p(X+\ol\alpha)+p(X)+\ol\beta$ of $G=F(X+\alpha)-F(X)-\beta$ has at most two roots by the
APN property. At a root $\ol G'(a)=p'(a+\ol\alpha)+p'(a)=K(\ol\alpha)\ne0$. Simple roots lift uniquely by
Hensel's lemma.
\end{proof}

\subsection{The local expansion along a residue class}
Fix an exceptional $\alpha$ and $\beta\in R_k$ and put $G(X)=F(X+\alpha)-F(X)-\beta$.

\begin{lemma}[The branch gate]\label{lem:gate}
Let $\ol x=a$.
\begin{enumerate}[label=(\roman*)]
\item $G(x+2u)\equiv G(x)\pmod 4$ for all $u\in R_k$.
\item $G(x+\alpha)\equiv G(x)+2[\Lambda(a)]\pmod 4$, where $[\cdot]$ is any lift.
\end{enumerate}
Hence, if $G\in4R_k$ on one residue class of a pair, it lies in $4R_k$ on the other class if and only if
$\Lambda(a)=0$.
\end{lemma}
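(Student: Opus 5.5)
Both parts come from the Hasse expansion $F(x+h)=\sum_i F^{[i]}(x)h^i$ in $R_k$, applied to $G(X)=F(X+\alpha)-F(X)-\beta$, keeping only the terms that survive modulo $4$.

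\emph{Part (i).} Expanding gives
\[
G(x+2u)=G(x)+2u\,G'(x)+4u^2G^{[2]}(x)+\dots,
\]
where the terms of order $i\ge2$ carry a factor $2^i$. It therefore suffices to show $G'(x)\in2R_k$. Since $G'(x)=F'(x+\alpha)-F'(x)$, this reduces modulo $2$ to
\[
p'(a+\rho)+p'(a)=K(\rho)=0,
\]
because $\ol\alpha=\rho\in\ker K$. Hence $2uG'(x)\in4R_k$ and $G(x+2u)\equiv G(x)\pmod 4$.

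\emph{Part (ii).} Put $H(x)=F(x+\alpha)-F(x)$, so that
\[
G(x+\alpha)-G(x)=H(x+\alpha)-H(x)=F(x+2\alpha)-2F(x+\alpha)+F(x).
\]
This is a second difference. Expanding $F(x+2\alpha)$ and $F(x+\alpha)$ around $x$, the second difference equals
\[
\sum_{i\ge1}F^{[i]}(x)\,\alpha^i(2^i-2).
\]
The $i=1$ term vanishes, and the terms with $i\ge3$ are divisible by $2(2^{i-1}-1)$. That factor is $2$ times an odd number, not a multiple of $4$, so the higher terms do not simply disappear. Modulo $4$ the second difference is therefore
\[
2\sum_{i\ge2}F^{[i]}(x)\alpha^i\pmod 4\quad\text{(using } 2^i-2\equiv 2 \pmod 4\text{ for } i\ge2).
\]
Now
\[
\sum_{i\ge2}F^{[i]}(x)\alpha^i=F(x+\alpha)-F(x)-\alpha F'(x),
\]
whose residue is $p(a+\rho)+p(a)+\rho p'(a)=\Lambda(a)$. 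Since $2y \bmod 4$ depends only on $\ol y$, this gives $G(x+\alpha)\equiv G(x)+2[\Lambda(a)]\pmod 4$.

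\emph{Consequence.} The class $a+\rho$ is reached from $x$ by adding $\alpha$. By part (i), $G \bmod 4$ is constant on a residue class, since any two points of the class differ by an element of $2R_k$. By part (ii) the values on the two classes of the pair differ by $2[\Lambda(a)]$. So if $G\in4R_k$ on one class, it lies in $4R_k$ on the other exactly when $\Lambda(a)=0$.

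The main obstacle is the bookkeeping of the $2$-adic factors $2^i-2$ together with the fact that the Hasse coefficients need not be units. Checking $2^i-2\equiv2\pmod 4$ for all $i\ge2$ settles it.
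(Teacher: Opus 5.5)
Your proof is correct and follows the paper's argument essentially step for step. Part (i) uses $G'(x)\in2R_k$, from the vanishing residue $p'(a+\rho)+p'(a)$. Part (ii) expands the second difference as $\sum_{i\ge1}(2^i-2)\alpha^iF^{[i]}(x)$ and reduces it modulo $4$ to twice $F(x+\alpha)-F(x)-\alpha F'(x)$, whose residue is $\Lambda(a)$.
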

\begin{proof}
(i) $G(x+2u)=G(x)+2uG'(x)+4(\cdots)$ and $G'(x)=F'(x+\alpha)-F'(x)\in2R_k$, because its residue is
$p'(a+\rho)+p'(a)=0$.
(ii) $G(x+\alpha)-G(x)=S(x):=F(x+2\alpha)-2F(x+\alpha)+F(x)=\sum_{i\ge1}(2^i-2)\alpha^iF^{[i]}(x)$. The
term $i=1$ vanishes and $2^i\equiv0\pmod4$ for $i\ge2$, so
$S(x)\equiv-2\bigl(F(x+\alpha)-F(x)-\alpha F'(x)\bigr)\pmod4$. The bracket has residue $\Lambda(a)$.
\end{proof}

\begin{lemma}[Local formula]\label{lem:local}
Let $x_a$ be a fixed lift of $a$ with $G(x_a)\in4R_k$, put $c_a=\ol{G(x_a)/4}$ and
$L_a(y)=\mu y^2+A_F(a)y$.
\begin{enumerate}[label=(\roman*)]
\item Choose $g_0,g_1\in R_k$ with $G(x_a)=4g_0$ and $G'(x_a)=2g_1$ (possible by Lemma~\ref{lem:A}(ii)) and put
$\Phi_a(z)=g_0+g_1z+G^{[2]}(x_a)z^2+\sum_{i\ge3}2^{i-2}G^{[i]}(x_a)z^i$. Then $G(x_a+2z)=4\Phi_a(z)$, so
$G(x_a+2z)=0$ if and only if $\Phi_a(z)\in2^{k-2}R_k$. This condition depends only on $z$ modulo $2^{k-2}$, and
$\Phi_a$ reduces modulo $2^{k-2}$ to a polynomial over $R_k/2^{k-2}R_k\cong R_{k-2}$ whose residue is
$c_a+A_F(a)y+\mu y^2$ with $y=\ol z$. The class of $a$ is $\{x_a+2z\}$ with $z$ running modulo $2^{k-1}$.
\item For $k=3$ the number of solutions in the class of $a$ is $q\cdot\#\{y\in\F_q:L_a(y)=c_a\}$.
\item If $\Lambda(a)=0$ and $x_{a+\rho}=x_a+\alpha$, then $c_{a+\rho}=c_a+E_F(a)$.
\end{enumerate}
\end{lemma}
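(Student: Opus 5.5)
Part (i) comes from a Taylor expansion of $G$ at $x_a$ with Hasse derivatives. Writing $G(x_a+2z)=\sum_{i\ge0}G^{[i]}(x_a)2^iz^i$, I substitute $G(x_a)=4g_0$ and $2G^{[1]}(x_a)z=2G'(x_a)z=4g_1z$; the existence of $g_1$ is exactly Lemma~\ref{lem:A}(ii), i.e.\ $G'(x_a)\in2R_k$, which also appears in the proof of Lemma~\ref{lem:gate}(i). The terms $i\ge2$ are $4\cdot 2^{i-2}G^{[i]}(x_a)z^i$. Together these give $G(x_a+2z)=4\Phi_a(z)$ with $\Phi_a$ as stated. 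Since multiplication by $4$ has kernel $2^{k-2}R_k$, we get $G(x_a+2z)=0$ iff $\Phi_a(z)\in2^{k-2}R_k$. Reducing the coefficients of $\Phi_a$ modulo $2^{k-2}$ gives a polynomial over $R_k/2^{k-2}R_k\cong R_{k-2}$, so the condition depends only on $z$ mod $2^{k-2}$. For the residue I reduce mod $2$: the terms $i\ge3$ carry $2^{i-2}$ and vanish, $\ol{g_0}=c_a$, $\ol{g_1}=\ol{G'(x_a)/2}=\ol{\tfrac12(F'(x_a+\alpha)-F'(x_a))}=A_F(a)$ by Lemma~\ref{lem:A}, and $\ol{G^{[2]}(x_a)}=p^{[2]}(a+\rho)+p^{[2]}(a)=\mu$ by Lemma~\ref{lem:F0}(i). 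The description of the class is clear: $x_a+2z$ with $z\in R_k/2^{k-1}R_k$ runs over the lifts of $a$, and $x_a+2z=x_a+2z'$ iff $z\equiv z'\pmod{2^{k-1}}$.

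For (ii), take $k=3$, so $2^{k-2}=2$. The condition $\Phi_a(z)\in2R_3$ depends only on $y=\ol z$ and reads $c_a+A_F(a)y+\mu y^2=0$, i.e.\ $L_a(y)=c_a$ in characteristic $2$. Here $z$ runs modulo $4$, so each residue $y$ has $|2R_3/4R_3|=q$ representatives. This gives $q\cdot\#\{y:L_a(y)=c_a\}$ solutions.

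For (iii), choose $x_{a+\rho}=x_a+\alpha$. Then
\[
G(x_a+\alpha)-G(x_a)=F(x_a+2\alpha)-2F(x_a+\alpha)+F(x_a).
\]
By Lemma~\ref{lem:gate}(ii) this difference lies in $4R_k$ when $\Lambda(a)=0$. So $G(x_{a+\rho})\in4R_k$, as the hypothesis of the lemma requires, and $c_{a+\rho}-c_a$ is the residue of the quotient by $4$. That residue is $E_F(a)$ by definition, with $\tau=\alpha$ and $x=x_a$; it does not depend on these choices by Lemma~\ref{lem:E}. Since signs do not matter in $\F_q$, $c_{a+\rho}=c_a+E_F(a)$.

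The main subtlety is the bookkeeping in (i): the division by $4$ must be legitimate, and the reduction of $\Phi_a$ modulo $2^{k-2}$ must be independent of the choices of $g_0,g_1$. Different choices of $g_0,g_1$ differ by elements of $2^{k-2}R_k$ and $2^{k-1}R_k$ respectively, which vanish modulo $2^{k-2}$. The remaining steps are routine once Lemmas~\ref{lem:A}, \ref{lem:E} and~\ref{lem:gate} are invoked.
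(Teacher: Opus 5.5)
Your proposal is correct and follows the paper's proof essentially step for step: the Hasse--Taylor expansion at $x_a$ with the residues of $g_1$ and $G^{[2]}(x_a)$ read off via Lemmas~\ref{lem:A} and~\ref{lem:F0}(i), the count of $q$ lifts modulo $4$ of each $\ol z$ for $k=3$, and the identity $G(x_a+\alpha)-G(x_a)=S(x_a)$ for (iii). You also make explicit the well-definedness bookkeeping for $g_0$, $g_1$ and the quotients by $4$, which the paper leaves implicit.
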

\begin{proof}
(i) The residue of $G'(x_a)/2$ is $A_F(a)$ by definition, and that of $G^{[2]}(x_a)$ is
$p^{[2]}(a+\rho)+p^{[2]}(a)=\mu$ by Lemma~\ref{lem:F0}(i).
(ii) For $k=3$ the value $G(x_a+2z)/4$ modulo $2$ depends on $\ol z$ only, and each $\ol z$ has $q$ lifts
$z$ modulo $4$.
(iii) $G(x_a+\alpha)-G(x_a)=S(x_a)\in4R_k$ when $\Lambda(a)=0$, and $\ol{S(x_a)/4}=E_F(a)$.
\end{proof}

\section{Lift invariants}\label{sec:invariants}

\begin{lemma}[$A_F$]\label{lem:A}
\begin{enumerate}[label=(\roman*)]
\item $F'(x+2u)\equiv F'(x)\pmod4$ for all $x,u$.
\item $F'(x+\tau)-F'(x)\in2R_k$ when $\ol\tau=\rho$, and $A_F(a)$ does not depend on the lifts $x$, $\tau$.
\item $A_F$ is $\rho$-periodic.
\end{enumerate}
\end{lemma}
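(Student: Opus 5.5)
The plan is to work entirely with the Hasse expansion $F(x+h)=\sum_{i\ge0}F^{[i]}(x)h^i$ and its analogue for the derivative, $F'(x+h)=\sum_{i\ge0}(i+1)F^{[i+1]}(x)h^i$. This identity follows by differentiating in $h$, or directly from $\binom{n}{i}(n-i)=(i+1)\binom{n}{i+1}$ applied monomialwise.

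For (i), put $h=2u$. The terms with $i\ge2$ carry $2^i\in4R_k$. The $i=1$ term is $2\cdot 2F^{[2]}(x)u=4F^{[2]}(x)u$. Hence $F'(x+2u)\equiv F'(x)\pmod 4$.

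For (ii), reduce modulo $2$. The residue of $F'(x+\tau)-F'(x)$ is $p'(a+\rho)+p'(a)$, where $a=\ol x$. Under (H) we have $p'=\varepsilon+K$ with $K$ additive, so this residue equals $K(\rho)=0$. Thus $F'(x+\tau)-F'(x)\in2R_k$.

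For independence of the lifts, first replace $x$ by $x+2u$, keeping $\tau$. By (i), both $F'(x+2u+\tau)$ and $F'(x+2u)$ change by multiples of $4$, so the difference is unchanged modulo $4$, and its half is unchanged modulo $2$.

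Next replace $\tau$ by $\tau+2v$. Applying (i) at the point $x+\tau$ gives $F'(x+\tau+2v)\equiv F'(x+\tau)\pmod4$. So the residue $A_F(a)$ depends only on $a$.

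The one step needing care is the halving. We only ever halve elements already known to lie in $2R_k$, and a congruence modulo $4$ between two such elements gives congruence modulo $2$ of their halves. This holds because $2y\in4R_k$ implies $y\in2R_k$ in the Galois ring: $2$ generates the maximal ideal, and $R_k$ is a free $\Z/2^k$-module.

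For (iii), pick lifts $x$ of $a$ and $\tau$ of $\rho$, and use $x+\tau$ as the lift of $a+\rho$ and $-\tau$ as the lift of $\rho$ (note $\ol{-\tau}=\rho$). By (ii) this choice is legitimate, and
\[
A_F(a+\rho)=\ol{\tfrac12\bigl(F'(x)-F'(x+\tau)\bigr)}=-A_F(a)=A_F(a)
\]
in characteristic $2$.

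I expect no real obstacle here. The only subtle points are the justification of the derivative expansion with integral coefficients, which is exact because the coefficients $(i+1)\binom{n}{i+1}$ are integers, and the halving argument in (ii).
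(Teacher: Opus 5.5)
Your proof is correct. For (i) and (ii) it is essentially the paper's argument: the Hasse expansion of $F'$ at $x+2u$, the residue $K(\rho)=0$, and invariance modulo $4$ under a change of either lift by (i). For (iii) the paper instead telescopes $A_F(a)+A_F(a+\rho)$ into the residue of $\tfrac12\bigl(F'(x+2\tau)-F'(x)\bigr)$ and shows it vanishes by (i), whereas you take $-\tau$ as the lift of $\rho$ and conclude directly from the lift-independence in (ii) and characteristic $2$; this is an equally valid, slightly shorter variant.
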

\begin{proof}
(i) $F'(x+2u)=F'(x)+2uF''(x)+\sum_{i\ge2}(2u)^i(F')^{[i]}(x)$ and $F''=2F^{[2]}$.
(ii) The residue of the difference is $p'(a+\rho)+p'(a)=0$. Changing $x$ to $x+2u$ or $\tau$ to $\tau+2v$
does not change either term modulo $4$ by (i).
(iii) $A_F(a)+A_F(a+\rho)$ is the residue of $(F'(x+2\tau)-F'(x))/2$, using $x+\tau$ as the lift of
$a+\rho$. By (i) the numerator lies in $4R_k$.
\end{proof}

\begin{proposition}[Prescribing $A_F$]\label{prop:Acontrol}
\begin{enumerate}[label=(\roman*)]
\item For $H\in R_k[X]$, the lift $F+2H$ has the same residue and $A_{F+2H}=A_F+D_\rho h$, where $h$ is the
residue function of $H'$.
\item Every function $h:\F_q\to\F_q$ is the residue function of $H'$ for some $H$.
\item $\{D_\rho h\}$ is the set of all $\rho$-periodic functions.
\end{enumerate}
Hence $A_F$ can be changed to any $\rho$-periodic function without changing the residue of $F$.
\end{proposition}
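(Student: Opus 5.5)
Part (i) is a direct computation from the definition of $A_F$. Write $G=F+2H$. For $\ol x=a$ and $\ol\tau=\rho$,
\[
\tfrac12\bigl(G'(x+\tau)-G'(x)\bigr)=\tfrac12\bigl(F'(x+\tau)-F'(x)\bigr)+H'(x+\tau)-H'(x).
\]
Reducing modulo $2$ gives $A_G(a)=A_F(a)+h(a+\rho)-h(a)$, where $h$ is the residue function of $H'$. In characteristic $2$ the last two terms equal $D_\rho h(a)$. The residue is unchanged because $2H$ reduces to $0$ coefficientwise. Well-definedness of $A_G$ comes from Lemma~\ref{lem:A}(ii), which applies since $G$ is again a lift of $p$.

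For (ii), note that every function $\F_q\to\F_q$ is a polynomial function of degree less than $q$. So it suffices to realize each monomial function $y\mapsto cy^j$ with $0\le j\le q-1$ as the residue of some $H'$, and then sum. The obvious choice $H=[c]X^{j+1}/(j+1)$ fails when $j+1$ is even. I would instead use exponents beyond $q$ together with the identity $y^{e}=y^{e'}$ on $\F_q$ whenever $e\equiv e'\pmod{q-1}$ and $e,e'\ge1$.

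Take $H=[c]X^{j+1+s(q-1)}$ with $s$ chosen so that $N=j+1+s(q-1)$ is odd. Since $q-1$ is odd, one of $s=0,1$ works. Then $H'=N[c]X^{N-1}$, and its residue function is $c\,y^{j+s(q-1)}$, which equals $c\,y^j$ on $\F_q$. Two edge cases need a check:
\begin{itemize}
\item For $j=0$ with $s=1$, the function is $y^{q-1}$, which differs from $1$ at $y=0$. But $j=0$ gives $N=1$, which is already odd, so $s=0$ is used and the problem does not arise.
\item For $j\ge1$, the exponent $j+s(q-1)$ is at least $1$, so the congruence identity applies at $y=0$ as well.
\end{itemize}
Summing these $H$ over the monomials gives any $h$.

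For (iii), $D_\rho h$ is always $\rho$-periodic, since $D_\rho h(a+\rho)=h(a)+h(a+\rho)$. Conversely, let $g$ be $\rho$-periodic. Choose a complement $W$ of $\{0,\rho\}$ in $\F_q$ as an $\F_2$-space, so every element is uniquely $w$ or $w+\rho$ with $w\in W$. Define $h(w)=0$ and $h(w+\rho)=g(w)$. Then:
\begin{itemize}
\item $D_\rho h(w)=g(w)$;
\item $D_\rho h(w+\rho)=g(w)=g(w+\rho)$, using periodicity.
\end{itemize}
So $D_\rho h=g$.

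The final sentence follows by combining the parts. Given a target $\rho$-periodic $g$, the difference $g-A_F$ is $\rho$-periodic by Lemma~\ref{lem:A}(iii). Pick $h$ with $D_\rho h=g-A_F$ by (iii), realize $h$ by some $H$ as in (ii), and apply (i) to $F+2H$.

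The only delicate step is (ii), which needs the parity adjustment of the exponent.
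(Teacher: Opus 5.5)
Your proof is correct and follows the paper's: parts (i) and (iii) and the final deduction use the same computation and the same pair-representative construction. In (ii), your parity-shifted monomials $[c]X^{j+1+s(q-1)}$ are exactly the terms $[s_e^2]X^{2e+1}$ the paper gets by interpolating the pointwise square root $s=h^{1/2}$, so only the verification differs: you use the congruence of exponents modulo $q-1$, while the paper uses Frobenius, $\sum_e s_e^2X^{2e}=s^2$.
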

\begin{proof}
(i) The change of the numerator is $2(H'(x+\tau)-H'(x))$. (ii) Let $s=h^{1/2}$ pointwise, interpolate
$s=\sum s_eX^e$ and put $H=\sum_e[s_e^2]X^{2e+1}$. Then $H'\equiv\sum_es_e^2X^{2e}=s^2\pmod2$. (iii) A
$\rho$-periodic $g$ is $D_\rho h$ for $h(a)=0$, $h(a+\rho)=g(a)$ on a set of pair representatives.
\end{proof}

\begin{lemma}[$E_F$]\label{lem:E}
Let $a\in Z$, $\ol x=a$, $\ol\tau=\rho$ and $S(x)=F(x+2\tau)-2F(x+\tau)+F(x)$.
\begin{enumerate}[label=(\roman*)]
\item $S(x)\in4R_k$.
\item $\ol{S(x)/4}$ does not depend on the lifts $x$ and $\tau$.
\end{enumerate}
\end{lemma}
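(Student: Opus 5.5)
The plan is to expand $S$ with Hasse derivatives, as in the proof of Lemma~\ref{lem:gate}(ii):
\[
S(x)=\sum_{i\ge1}(2^i-2)\tau^iF^{[i]}(x)\equiv -2\bigl(F(x+\tau)-F(x)-\tau F'(x)\bigr)+4\tau^2F^{[2]}(x)\pmod 8 .
\]
Here the $i=1$ term vanishes, the $i=2$ coefficient is $2$, and the terms with $i\ge3$ have $2^i-2\equiv-2\pmod 8$.

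For (i) I reduce modulo $4$. Then $S(x)\equiv-2\,T(x)$ with $T(x)=F(x+\tau)-F(x)-\tau F'(x)$. The residue of $T(x)$ is $\Lambda(a)=0$ because $a\in Z$, so $T(x)\in2R_k$ and $S(x)\in4R_k$.

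For (ii) I work modulo $8$ and write
\[
\ol{S(x)/4}=\ol{T(x)/2}+\rho^2\,p^{[2]}(a),
\]
where the sign of $T$ disappears in the residue field. The second term is independent of the lifts, so it suffices to show that $\ol{T(x)/2}$ is, i.e.\ that $T(x)\bmod 4$ is unchanged when $x\mapsto x+2u$ or $\tau\mapsto\tau+2v$. This step is where the hypotheses on the residue must be used; the alternative of differencing $S$ directly needs the same inputs.

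\emph{Changing $\tau$.} We have $T_{\tau+2v}(x)-T_\tau(x)=F(x+\tau+2v)-F(x+\tau)-2vF'(x)$. Modulo $4$ this is $2v\bigl(F'(x+\tau)-F'(x)\bigr)$, and by Lemma~\ref{lem:A}(ii) the bracket lies in $2R_k$, so the change is $\equiv0\pmod 4$.

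\emph{Changing $x$.} Modulo $4$,
\[
T(x+2u)-T(x)\equiv 2u\bigl(F'(x+\tau)-F'(x)\bigr)-\tau\bigl(F'(x+2u)-F'(x)\bigr).
\]
The first term is in $4R_k$ by Lemma~\ref{lem:A}(ii), and the second by Lemma~\ref{lem:A}(i).

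Hence $\ol{T(x)/2}$, and therefore $E_F(a)$, is well defined.

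The main obstacle is the mod-$8$ bookkeeping in the first display. One has to check that every term with $i\ge3$ really contributes exactly $-2\tau^iF^{[i]}$ modulo $8$, and that the residue of $F^{[2]}$ is $p^{[2]}$, which holds because the Hasse derivatives have integer coefficients. An alternative that avoids this expansion is to difference $S$ under each change of lift and apply Lemma~\ref{lem:A} directly, but the computation above seems cleaner.
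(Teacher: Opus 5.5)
Your proof is correct. Part (i) is the paper's argument; part (ii) takes a somewhat different route.

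\textbf{The paper's route.} It differences $S$ itself. Changing $x$ to $x+2u$ adds $2uS'(x)+4u^2S^{[2]}(x)+\sum_{i\ge3}(2u)^iS^{[i]}(x)$. The paper shows $S'(x)\equiv2\bigl(F'(x)-F'(x+\tau)\bigr)\equiv0\pmod4$ by Lemma~\ref{lem:A}, and $\ol{S^{[2]}(x)}=p^{[2]}(a)+p^{[2]}(a)=0$. So the change lies in $8R_k$. Changing $\tau$ to $\tau+2v$ alters $S$ modulo $8$ by $4v\bigl(F'(x+2\tau)-F'(x+\tau)\bigr)$, which vanishes because the bracket is even.

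\textbf{Your route.} You do the mod-$8$ work once, in the identity $S\equiv-2T+4\tau^2F^{[2]}\pmod8$. This isolates the second-order term explicitly and gives the closed form $E_F(a)=\ol{T(x)/2}+\rho^2p^{[2]}(a)$. After that you only need mod-$4$, first-order invariance of $T$, which again comes from Lemma~\ref{lem:A}(i),(ii).

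\textbf{What each buys.} Your route yields an explicit formula for $E_F$ as a by-product. It is the formula the paper derives separately, with $\tau=1$, in the proof of Proposition~\ref{prop:I678}(i) to compute $E_F$ for $P_{m,3}$. The paper's route needs no auxiliary quantity $T$. Its invariance checks are phrased directly in terms of $S$, mirroring the proof of Lemma~\ref{lem:A}.
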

\begin{proof}
(i) As in Lemma~\ref{lem:gate}(ii), $S(x)\equiv-2(F(x+\tau)-F(x)-\tau F'(x))\pmod 4$ and the bracket has residue
$\Lambda(a)=0$.
(ii) Changing $x$ to $x+2u$ adds $2uS'(x)+4u^2S^{[2]}(x)+\sum_{i\ge3}(2u)^iS^{[i]}(x)$. By
Lemma~\ref{lem:A}(i), $S'(x)\equiv2(F'(x)-F'(x+\tau))\equiv0\pmod4$, and $S^{[2]}(x)$ has residue
$p^{[2]}(a)+p^{[2]}(a)=0$. So the change lies in $8R_k$. Changing $\tau$ to $\tau+2v$ changes $S$ modulo $8$
by $4v(F'(x+2\tau)-F'(x+\tau))$, and $F'(x+2\tau)\equiv F'(x)\equiv F'(x+\tau)\pmod 2$.
\end{proof}

\begin{proposition}[Translation identity]\label{prop:Etransl}
For $a\in Z$, $E_F(a+\rho)+E_F(a)=\rho A_F(a)+\rho^2\mu=L_a(\rho)$.
\end{proposition}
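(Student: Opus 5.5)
The plan is to compute $E_F(a+\rho)+E_F(a)$ directly from the definition, using the freedom granted by Lemma~\ref{lem:E}(ii). By Lemma~\ref{lem:F0}(ii) we have $a+\rho\in Z$, so $E_F(a+\rho)$ is defined. Fix $x$ with $\ol x=a$ and $\tau$ with $\ol\tau=\rho$. We may use $x+\tau$ as the lift of $a+\rho$ with the same $\tau$. Since we are in characteristic $2$ on residues, the sum equals the residue of $\bigl(S(x+\tau)-S(x)\bigr)/4$. Here
\[
S(x+\tau)-S(x)=F(x+3\tau)-3F(x+2\tau)+3F(x+\tau)-F(x),
\]
which is the third forward difference $\Delta_\tau^3F(x)$. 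It lies in $4R_k$ because both $S$-values do (Lemma~\ref{lem:E}(i)). So it suffices to know this third difference modulo $8$.

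Next, expand by Hasse derivatives: $\Delta_\tau^3F(x)=\sum_{i\ge1}c_i\tau^iF^{[i]}(x)$ with $c_i=3^i-3\cdot2^i+3$. One checks $c_1=c_2=0$. Modulo $8$, $c_i\equiv6$ for odd $i\ge3$ and $c_i\equiv4$ for even $i\ge4$. The odd part is not obviously in $4R_k$ termwise, so I would regroup it using two auxiliary quantities, both vanishing modulo $2$:
\[
T=F(x+\tau)-F(x)-\tau F'(x)=\sum_{i\ge2}\tau^iF^{[i]}(x),\qquad
U=\tau\bigl(F'(x+\tau)-F'(x)\bigr)=\sum_{i\ge2}i\,\tau^iF^{[i]}(x).
\]
The residue of $T$ is $\Lambda(a)=0$, so $T\in2R_k$. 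By Lemma~\ref{lem:A}(ii), $U\in2R_k$ with $\ol{U/2}=\rho A_F(a)$.

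The aim is an identity modulo $8$ of the form
\[
\Delta_\tau^3F(x)\equiv 2U-2T+(\text{correction})\equiv 2U+2T+(\text{correction})\pmod 8,
\]
using $4T\in8R_k$. Then $\ol{2U/4}=\ol{U/2}=\rho A_F(a)$. The remaining even-index terms, together with the coefficient mismatch at $i=2$, should collapse modulo $8$ to $4\tau^2\bigl(F^{[2]}(x+\tau)-F^{[2]}(x)\bigr)$ plus terms in $8R_k$. By Lemma~\ref{lem:F0}(i), this has residue $4\rho^2\mu$ after division by $4$. Concretely, I would compare $c_i$ with the coefficients of $\tau^iF^{[i]}$ in $2U$, $2T$ and $4\tau^2\Delta_\tau F^{[2]}(x)=4\sum_i\binom i2\tau^iF^{[i]}(x)$. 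That is, I would verify the congruence
\[
c_i\equiv 2i+2+4\tbinom i2\pmod 8\quad(i\ge2)
\]
as an identity of integers modulo $8$, and absorb $2T$ via $T\in2R_k$. For $i=2$ the right side is $4+2+4=10\equiv2$, not $0$, so the bookkeeping has to be adjusted, for instance with the sign choice $\pm2T$. I expect this adjustment to work out once $T\in2R_k$ is used.

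The main obstacle is exactly this coefficient bookkeeping modulo $8$. Terms like $6\tau^3F^{[3]}$ are individually only in $2R_k$, so divisibility by $4$ has to come from the combination with $T$ and $U$, that is, from $\Lambda(a)=0$ and $p'(a+\rho)=p'(a)$. If the direct congruence on $c_i$ fails, I would fall back on a different organization. Write $\Delta_\tau^3F=\Delta_\tau^2(\Delta_\tau F)$ and apply to $g=\Delta_\tau F$ the expansion already used in Lemma~\ref{lem:gate}(ii), $\Delta_\tau^2g\equiv2\tau g'+4\tau^2g^{[2]}+\cdots$. Then use $g'=F'(x+\tau)-F'(x)=U/\tau$ and the affineness of $p^{[2]}$ to read off $\rho A_F(a)+\rho^2\mu$. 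The final equality with $L_a(\rho)=\mu\rho^2+A_F(a)\rho$ is immediate from the definition of $L_a$ in Lemma~\ref{lem:local}.
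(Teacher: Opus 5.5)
\textbf{Gap in the main route.} As written, your primary computation targets a false congruence. The identity $c_i\equiv 2i+2+4\binom i2\pmod 8$ fails not only at $i=2$ but at every $i\ge2$. For instance $c_3=6$, while the right side is $6+2+12\equiv4$. No sign choice can repair this, because, as you note yourself, $2T\equiv-2T\pmod 8$.

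A surviving $2T$ term would in fact be fatal. Modulo $8$ one has $S(x)\equiv-2T+4\tau^2F^{[2]}(x)$, so $\ol{T/2}=E_F(a)+\rho^2p^{[2]}(a)$. This is not a function of $A_F$ and $\mu$, since Proposition~\ref{prop:Econtrol} shifts $E_F$ while keeping $A_F$.

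The correct bookkeeping is
\[
c_i\equiv 2i+4+4\tbinom i2\,[i\ge3]\pmod 8\qquad(i\ge2).
\]
Note that $\tau^2\bigl(F^{[2]}(x+\tau)-F^{[2]}(x)\bigr)$ only contains the indices $i\ge3$, so it contributes nothing at $i=2$. Equivalently,
\[
\Delta_\tau^3F(x)\equiv 2U+4T+4\tau^2\bigl(F^{[2]}(x+\tau)-F^{[2]}(x)\bigr)\pmod 8 .
\]
The $T$-term drops out because $T\in2R_k$ gives $4T\in8R_k$, and the residue of the quotient by $4$ is then $\rho A_F(a)+\rho^2\mu$. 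With this correction your coefficient comparison becomes a valid, if more computational, alternative to the paper's argument.

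\textbf{The fallback.} Your fallback is the paper's proof, but the expansion you quote is not that of a second difference. By the formula in Lemma~\ref{lem:gate}(ii), $\Delta_\tau^2g(x)=\sum_{i\ge1}(2^i-2)\tau^ig^{[i]}(x)$, which has no first-order term. The expansion $2\tau g'(x)+4\tau^2g^{[2]}(x)+8(\cdots)$ belongs to $g(x+2\tau)-g(x)$.

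To bridge the two, use $\Delta_\tau^2g(x)=g(x+2\tau)-g(x)-2\Delta_\tau g(x)$. For $g=\Delta_\tau F$ the correction is $2S(x)\in8R_k$ by Lemma~\ref{lem:E}(i), so it vanishes modulo $8$.

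The paper avoids this detour with the exact telescoping $S(x+\tau)+S(x)=G(x+2\tau)-G(x)$, where $G=\Delta_\tau F$; the plus sign is harmless on residues. It then reads off $\ol{G'(x)/2}=A_F(a)$ and $\ol{G^{[2]}(x)}=\mu$. With either repair your argument is complete.
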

\begin{proof}
Put $G(X)=F(X+\tau)-F(X)$, so $S(X)=G(X+\tau)-G(X)$. The sum telescopes exactly:
$S(x+\tau)+S(x)=G(x+2\tau)-G(x)$. Modulo $8$, $G(x+2\tau)-G(x)\equiv2\tau G'(x)+4\tau^2G^{[2]}(x)$. Here
$G'(x)\in2R_k$ with $\ol{G'(x)/2}=A_F(a)$, and $\ol{G^{[2]}(x)}=\mu$. Both $a$ and $a+\rho$ lie in $Z$, and
$x+\tau$ is a lift of $a+\rho$ by Lemma~\ref{lem:E}(ii).
\end{proof}

\begin{lemma}[Overlap of images]\label{lem:overlap}
Put $A=A_F(a)$. (i) If $\mu\ne0$ and $A\ne0$, then $L_a$ is $\F_2$-linear and two-to-one with kernel
$\{0,A/\mu\}$, and $E\in L_a(\F_q)$ if and only if $\Tr(\mu E/A^2)=0$. (ii) If $\mu\ne0$ and $A=0$, then $L_a$
is bijective. (iii) If $\mu=0$ and $A\ne0$, then $L_a$ is bijective; if $\mu=A=0$, then $L_a\equiv0$. Moreover $\chi_F(a+\rho)=\chi_F(a)$ for $a\in Z$.
\end{lemma}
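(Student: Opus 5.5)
The plan is to treat the four claims separately. Each one reduces to elementary facts about the map $L_a(y)=\mu y^2+Ay$ over $\F_q$ of characteristic $2$, where $q=2^m$ with $m$ odd. The last claim, the periodicity of $\chi_F$, uses the earlier lemmas on the lift invariants.

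For (i), with $\mu\ne0$ and $A\ne0$: additivity of $L_a$ follows from the Frobenius identity $(y+y')^2=y^2+y'^2$. Its kernel consists of the $y$ with $y(\mu y+A)=0$, namely $\{0,A/\mu\}$, so $L_a$ is two-to-one onto an $\F_2$-hyperplane. To identify the image, I substitute $y=(A/\mu)w$ and get $L_a(y)=(A^2/\mu)(w^2+w)$. Since $\{w^2+w\}$ is exactly the trace-zero hyperplane (additive Hilbert 90), we have $E\in L_a(\F_q)$ if and only if $\Tr(\mu E/A^2)=0$.

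For (ii), with $A=0$: the map $L_a(y)=\mu y^2$ is a bijection, because squaring is bijective on $\F_q$ and $\mu\ne0$. For (iii), with $\mu=0$: $L_a=Ay$ is bijective when $A\ne0$ and identically zero when $A=0$. All of this is routine.

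For the periodicity $\chi_F(a+\rho)=\chi_F(a)$, with $a\in Z$:
\begin{enumerate}[label=(\alph*)]
\item Lemma~\ref{lem:F0}(ii) gives $a+\rho\in Z$, so $\chi_F(a+\rho)$ is defined.
\item Lemma~\ref{lem:A}(iii) gives $A_F(a+\rho)=A_F(a)=A\ne0$.
\item Hence $\chi_F(a+\rho)+\chi_F(a)=\Tr\bigl(\mu(E_F(a+\rho)+E_F(a))/A^2\bigr)$.
\item By Proposition~\ref{prop:Etransl} this argument equals $\mu L_a(\rho)/A^2$.
\item Since $L_a(\rho)$ lies in the image of $L_a$, part (i) shows that this trace is $0$.
\end{enumerate}
To verify the last step directly: $\mu(\rho A+\rho^2\mu)/A^2=w+w^2$ with $w=\mu\rho/A$, which has trace zero.

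The only step needing care is this final periodicity argument. It requires recognizing that the translation identity expresses the difference $E_F(a+\rho)+E_F(a)$ as a value of $L_a$, and then applying the trace criterion from (i).
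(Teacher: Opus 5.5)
Your proof is correct and follows essentially the same route as the paper: the substitution $y=(A/\mu)w$ together with $\{w^2+w\}=\ker\Tr$ for (i), and, for the periodicity, $A_F(a+\rho)=A_F(a)$ combined with the translation identity, so that the argument of the trace becomes $s+s^2$ with $s=\mu\rho/A$. You go slightly beyond the paper's text by checking that $a+\rho\in Z$ and $A_F(a+\rho)\ne0$, so that $\chi_F(a+\rho)$ is defined, and by observing that $L_a(\rho)$ lies in the image of $L_a$; both are harmless additions.
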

\begin{proof}
With $y=(A/\mu)t$ we get $L_a(y)=(A^2/\mu)(t^2+t)$, and $\{t^2+t\}=\ker\Tr$ (both have $q/2$ elements and
$\Tr(t^2+t)=0$). For the periodicity, $A_F(a+\rho)=A_F(a)$ and
$\Tr(\mu(E_F(a+\rho)+E_F(a))/A^2)=\Tr(s+s^2)=0$ with $s=\mu\rho/A$ by Proposition~\ref{prop:Etransl}.
\end{proof}

\begin{proposition}[Prescribing $E_F$]\label{prop:Econtrol}
For $J\in R_k[X]$ with residue function $j$, the lift $F+2J^2$ has the same residue and the same $A_F$, and
$E_{F+2J^2}(a)=E_F(a)+(D_\rho j(a))^2$ for $a\in Z$. Hence, keeping $A_F$, the function $E_F$ can be shifted on
$Z$ by any $\rho$-periodic function.
\end{proposition}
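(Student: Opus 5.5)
The plan is a direct computation of the three invariants for $F_1=F+2J^2$, first $A_{F_1}$ by Proposition~\ref{prop:Acontrol}(i), then $E_{F_1}$ on $Z$ by expanding the second difference, and finally a realization argument for the ``hence'' part. The residue of $F_1$ is clearly that of $F$, since $2J^2$ reduces to $0$.

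\textbf{Step 1: $A_F$ is unchanged.} I apply Proposition~\ref{prop:Acontrol}(i) with $H=J^2$. Then $H'=2JJ'$, so the residue function of $H'$ is $h=0$, and $A_{F_1}=A_F+D_\rho 0=A_F$.

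\textbf{Step 2: the shift of $E_F$.} Fix $a\in Z$ with lifts $x,\tau$. By definition
\[
E_{F_1}(a)=E_F(a)+\ol{\tfrac14\cdot 2\bigl(J(x+2\tau)^2-2J(x+\tau)^2+J(x)^2\bigr)},
\]
so I need the residue of $T/2$, where $T=J(x+2\tau)^2-2J(x+\tau)^2+J(x)^2$. I show $T\in 2R_k$ and compute $T$ modulo $4$.
\begin{itemize}
\item The middle term $2J(x+\tau)^2$ is $\equiv 2\,\ol{J(x+\tau)}^{\,2}$ in the sense that only its residue matters modulo $4$.
\item $J(x+2\tau)=J(x)+2\tau J'(x)+4(\cdots)$, so $J(x+2\tau)^2\equiv J(x)^2+4(\cdots)\equiv J(x)^2 \pmod 4$.
\end{itemize}
Hence $T\equiv 2J(x)^2-2J(x+\tau)^2 \pmod 4$, and
\[
\ol{T/2}=j(a)^2+j(a+\rho)^2=(D_\rho j(a))^2,
\]
using that squaring is additive in characteristic $2$ and that signs disappear there. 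This gives $E_{F_1}(a)=E_F(a)+(D_\rho j(a))^2$. The well-definedness of $E_{F_1}$ from Lemma~\ref{lem:E} justifies working with any fixed lifts.

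\textbf{Step 3: realizing every $\rho$-periodic shift.} Let $g$ be $\rho$-periodic on $Z$.
\begin{itemize}
\item Extend $g$ to a $\rho$-periodic function on all of $\F_q$, say by zero off $Z$; this is possible because $Z$ is a union of pairs by Lemma~\ref{lem:F0}(ii).
\item Take the pointwise square root $g^{1/2}$. It is still $\rho$-periodic, since Frobenius is a bijection.
\item By Proposition~\ref{prop:Acontrol}(iii), write $g^{1/2}=D_\rho j$ for some function $j$.
\item By interpolation, $j$ is the residue function of some $J\in R_k[X]$; lift its coefficients arbitrarily.
\end{itemize}
Then $F+2J^2$ keeps $A_F$ and shifts $E_F$ by $g$ on $Z$.

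The main obstacle is the bookkeeping modulo $4$ in Step~2, where one must make sure that no cross term contributes. The point is that $2\tau J'(x)$ enters $J(x+2\tau)^2$ only with a factor $2\cdot 2=4$, so it drops out modulo $4$.
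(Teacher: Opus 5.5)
Your proposal is correct and follows the paper's proof essentially step for step. $A_F$ is unchanged because $(J^2)'=2JJ'$ has zero residue, the congruence $J(x+2\tau)^2\equiv J(x)^2\pmod 4$ reduces the change of $E$ to the residue of $J(x)^2-J(x+\tau)^2$ (the paper gets this congruence from the Taylor expansion of $Q=J^2$ rather than by squaring $J(x+2\tau)$), and the shift statement follows from Proposition~\ref{prop:Acontrol}(iii) with bijectivity of squaring; your extension by zero off $Z$ via Lemma~\ref{lem:F0}(ii) just makes explicit a detail the paper leaves implicit.
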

\begin{proof}
Put $Q=J^2$. Since $Q'=2JJ'$, we have $Q(x+2\tau)\equiv Q(x)\pmod4$. The change of $E$ is the residue of
$(Q(x+2\tau)-2Q(x+\tau)+Q(x))/2\equiv Q(x)-Q(x+\tau)\pmod2$, which is $j(a)^2+j(a+\rho)^2$. The change of $A$ is
$D_\rho$ of the residue of $Q'$, which is $0$. The last sentence follows from
Proposition~\ref{prop:Acontrol}(iii) and bijectivity of squaring.
\end{proof}

\section{Exceptional spectra and lower bounds}\label{sec:spectra}

\begin{theorem}\label{thm:S12}
Let $k=3$ and fix an exceptional $\alpha$.
\begin{enumerate}[label=(\roman*)]
\item If $\mu\ne0$, then $n_q=(s-z_0)q$, $n_{2q}=(q-s+z_0-\nu)q/2$ and $n_{4q}=\nu q/4$.
\item If $\mu=0$ and $A_F$ has no zero, then $n_q=(q-z)q$ and $n_{2q}=(z/2)q$.
\end{enumerate}
\end{theorem}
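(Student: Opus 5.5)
The plan is to fix the exceptional direction $\alpha$ and compute $\delta_F(\alpha,\beta)$ for every $\beta\in R_3$ by reading $\beta$ one $2$-adic digit at a time. Write $G(X)=F(X+\alpha)-F(X)-\beta$.

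\emph{First digit.} The residue equation is $p(a+\rho)+p(a)=\ol\beta$. By Lemma~\ref{lem:F0}(iii) it has no solution unless $\ol\beta$ lies in the image of $a\mapsto p(a+\rho)+p(a)$. In that case its solutions are exactly one pair $\{a,a+\rho\}$. So each admissible $\ol\beta$ selects one pair, there are $q/2$ such $\ol\beta$, and $\delta_F(\alpha,\beta)$ is the number of solutions in the two classes of that pair.

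\emph{Second digit.} By Lemma~\ref{lem:gate}(i), $G$ modulo $4$ is constant on each residue class. Changing $\beta$ by $2b$ shifts this constant by $2\ol b$, so for a fixed class exactly one of the $q$ second digits makes $G\in4R_3$ there. Lemma~\ref{lem:gate}(ii) relates the two classes of the pair.
\begin{itemize}
\item If $a\notin Z$, the two classes are served by two different second digits, one each. The remaining $q-2$ digits give entry $0$.
\item If $a\in Z$, a single second digit makes both classes qualify at once, and the other digits give $0$.
\end{itemize}

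\emph{Third digit.} The third digit of $\beta$ shifts $c_a=\ol{G(x_a)/4}$ bijectively over $\F_q$. By Lemma~\ref{lem:local}(ii), a qualifying class contributes $q\cdot\#L_a^{-1}(c_a)$ solutions. For $a\in Z$ we choose $x_{a+\rho}=x_a+\alpha$. Lemma~\ref{lem:local}(iii) then gives $c_{a+\rho}=c_a+E_F(a)$. Since $A_F$ is $\rho$-periodic (Lemma~\ref{lem:A}(iii)), we have $L_{a+\rho}=L_a$. For $a\in Z$ the entry is therefore $q\bigl(\#L_a^{-1}(c)+\#L_a^{-1}(c+E_F(a))\bigr)$, with $c$ running over $\F_q$.

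\emph{Case analysis for (i), $\mu\ne0$.} We use Lemma~\ref{lem:overlap} pair by pair.
\begin{itemize}
\item $a\notin Z$, $A_F(a)=0$. $L_a$ is bijective, so each class gives $q$ entries equal to $q$. The pair gives $2q$ entries $q$.
\item $a\notin Z$, $A_F(a)\ne0$. $L_a$ is two-to-one onto an index-$2$ subgroup, so each class gives $q/2$ entries $2q$. The pair gives $q$ entries $2q$.
\item $a\in Z$, $A_F(a)=0$. The entry is $q+q=2q$ for all $q$ values of $c$.
\item $a\in Z$, $A_F(a)\ne0$, $\chi_F(a)=1$. Then $E_F(a)\notin L_a(\F_q)$, so exactly one of $c$, $c+E_F(a)$ is hit. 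This gives $q$ entries $2q$.
\item $a\in Z$, $A_F(a)\ne0$, $\chi_F(a)=0$. Then $E_F(a)\in L_a(\F_q)$, so both or neither of $c$, $c+E_F(a)$ are hit. This gives $q/2$ entries $4q$.
\end{itemize}
By Lemma~\ref{lem:overlap}, $\chi_F$ is $\rho$-periodic, and $A_F$ and $Z$ are periodic as well. So $s$, $z_0$, $z$ and $\nu$ are even and the pair counts are $(s-z_0)/2$, $(q-s-z+z_0)/2$, $z_0/2$, $(z-z_0-\nu)/2$ and $\nu/2$ in the order of the list above. Summing gives $n_q=(s-z_0)q$ and $n_{4q}=\nu q/4$. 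For $n_{2q}$ we get $\tfrac q2\bigl[(q-s-z+z_0)+z_0+(z-z_0-\nu)\bigr]=\tfrac12(q-s+z_0-\nu)q$.

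\emph{Case (ii), $\mu=0$ with $A_F$ zero-free.} Now $L_a$ is bijective for every $a$.
\begin{itemize}
\item A pair not in $Z$ gives $2q$ entries $q$. There are $(q-z)/2$ such pairs, so $n_q=(q-z)q$.
\item A pair in $Z$ gives $q$ entries $q+q=2q$. There are $z/2$ such pairs, so $n_{2q}=(z/2)q$.
\end{itemize}

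\emph{Expected obstacle.} The bookkeeping itself is routine. The delicate step is justifying the Lemma~\ref{lem:gate}(ii) dichotomy, that the second digits for the two classes coincide exactly when $a\in Z$. The same choice of lifts $x_{a+\rho}=x_a+\alpha$ must serve both there and in Lemma~\ref{lem:local}(iii), so that the relative shift $E_F(a)$ is the invariant of Lemma~\ref{lem:E}. I would also check that the map from digits of $\beta$ to $c_a$ is a bijection independent of the choice of $x_a$.
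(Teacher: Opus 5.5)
Your proposal is correct and follows essentially the same route as the paper's proof. In both, the gate lemma fixes one value of $\beta \bmod 4$ per residue class, and the two classes of a pair share it exactly when $\Lambda(a)=0$. The last digit of $\beta$ then runs bijectively through $c_a$, each class contributes $q\cdot\#L_a^{-1}(c_a)$, and the same five-way split by $Z$, $A_F$ and $\chi_F$, together with their $\rho$-periodicity, gives the pair counts. Your bookkeeping with $(s-z_0)/2$, $(q-s-z+z_0)/2$, $z_0/2$, $(z-z_0-\nu)/2$ and $\nu/2$ pairs simply spells out the paper's final counting step.
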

\begin{proof}
If $\ol\beta$ is not a value of $D_\rho p$ then $\delta_F(\alpha,\beta)=0$. If $\ol\beta=D_\rho p(a)$ only the
pair $\{a,a+\rho\}$ contributes (Lemma~\ref{lem:F0}(iii)). By Lemma~\ref{lem:gate}(i) the class of $a$
passes the gate for exactly one value of $\beta\bmod4$. The remaining digit of $\beta$ runs through $q$
values and is in bijection with $c_a$. The class of $a$ contributes $q\cdot\#L_a^{-1}(c_a)$
(Lemma~\ref{lem:local}(ii)).

If $\Lambda(a)\ne0$ the two classes pass for different $\beta\bmod4$ (Lemma~\ref{lem:gate}(ii)). For each
class: if $\mu\ne0=A(a)$ we get $q$ values $\beta$ with entry $q$. If $\mu\ne0\ne A(a)$ we get $q/2$ values
with entry $2q$. If $\mu=0\ne A(a)$ we get $q$ values with entry $q$.

If $\Lambda(a)=0$ both classes pass for the same $\beta\bmod4$, and $c_{a+\rho}=c_a+E_F(a)$
(Lemma~\ref{lem:local}(iii)). The entry is $q(\#L_a^{-1}(c)+\#L_a^{-1}(c+E))$, since $L_{a+\rho}=L_a$ by Lemma~\ref{lem:A}(iii). If $\mu\ne0=A$ we get $q$ values
with entry $2q$. If $\mu\ne0\ne A$ and $\chi=1$ the images are disjoint and we get $q$ values with entry $2q$.
If $\chi=0$ the images coincide and we get $q/2$ values with entry $4q$. If $\mu=0\ne A$ we get $q$ values
with entry $2q$.

Counting residues outside and inside $Z$ gives the formulas. For (i): $s-z_0$ residues outside $Z$ with
$A=0$ give $n_q$. The value $2q$ comes from $q-z-s+z_0$ residues outside $Z$ with $A\ne0$ (each $q/2$),
$z_0/2$ pairs in $Z$ with $A=0$, and $(z-z_0-\nu)/2$ pairs in $Z$ with $\chi=1$ (each $q$). The value $4q$
comes from $\nu/2$ pairs with $\chi=0$ (each $q/2$). As a check, $q\,n_q+2q\,n_{2q}+4q\,n_{4q}=q^3$.
\end{proof}

\begin{theorem}[Stability modulo $8$]\label{thm:S3}
If $A_F$ has no zero, then $\delta_F(\alpha,\beta)=\delta_{F_3}(\alpha_3,\beta_3)$ for every exceptional
$\alpha$ and every $\beta$, where the index $3$ denotes reduction modulo $8$. Consequently, for every
$k\ge3$, $n_q=(q-z)q^{k-2}$ and $n_{2q}=(z/2)q^{k-2}$ if $\mu=0$, and $n_{2q}=(q-\nu)q^{k-2}/2$ and
$n_{4q}=\nu q^{k-2}/4$ if $\mu\ne0$.
\end{theorem}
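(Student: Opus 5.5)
Since $A_F$ has no zero, I will show that each residue class which passes the gate contributes a count that depends only on data modulo $8$. So fix an exceptional $\alpha$ and $\beta$, put $G=F(X+\alpha)-F(X)-\beta$, and use Lemma~\ref{lem:local}(i).

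Let $a$ be a residue with $G(x_a)\in4R_k$. The solutions in the class of $a$ are $x_a+2z$ with $z$ running modulo $2^{k-1}$ and $\Phi_a(z)\in2^{k-2}R_k$. This condition depends only on $z$ modulo $2^{k-2}$, so each solution $z$ of $\bar\Phi_a(z)=0$ over $R_{k-2}$ yields exactly $q$ solutions $x$ (the top digit of $z$ is free). Over $R_{k-2}$ the residue of $\Phi_a$ is $c_a+A_F(a)y+\mu y^2$. Its formal derivative has residue $A_F(a)+2\mu y=A_F(a)\ne0$ in characteristic $2$. Hence every root of the residue polynomial is simple and lifts uniquely to $R_{k-2}$ by Hensel's lemma. (For $k-2=1$ there is nothing to lift.)

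The class of $a$ therefore contributes $q\cdot\#\{y:L_a(y)=c_a\}$ solutions, exactly as in Lemma~\ref{lem:local}(ii) for $k=3$. This number depends on $c_a$, $A_F(a)$ and $\mu$. Both $c_a=\overline{G(x_a)/4}$ and the gate condition $G(x_a)\in4R_k$ depend only on $F,\alpha,\beta$ modulo $8$, and $A_F$ is likewise determined modulo $4$. Classes failing the gate contribute nothing at either level. Only the residues $a$ with $D_\rho p(a)=\bar\beta$ can contribute, and this condition is also read modulo $8$. Summing over classes gives $\delta_F(\alpha,\beta)=\delta_{F_3}(\alpha_3,\beta_3)$.

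The point that needs care is that $x_a$ must be chosen compatibly. I will take any lift $x_a\in R_k$; its reduction modulo $8$ is a valid choice for $F_3$, and the value of $c_a$ does not depend on the choice within the class, by Lemma~\ref{lem:gate}(i). The factor $q$ is the same at both levels: at level $k$ it counts lifts of $z$ from $2^{k-2}$ to $2^{k-1}$, and at $k=3$ it counts $z$ modulo $4$ over $\bar z$. So the entries agree.

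For the spectra, the map $\beta\mapsto\beta_3$ is $q^{k-3}$-to-one, since $|R_k|/|R_3|=q^{k-3}$. Each entry value at level $3$ is therefore repeated $q^{k-3}$ times. Multiplying the multiplicities of Theorem~\ref{thm:S12} by $q^{k-3}$ gives the stated counts, using $s=z_0=0$ when $A_F$ is zero-free. The main obstacle is the case $k=3$ in the Hensel step, together with the compatibility of $\alpha_3$ being exceptional. Both are immediate once the dependence on $z$ modulo $2^{k-2}$ is made explicit.
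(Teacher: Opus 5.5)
Your argument is correct and is essentially the paper's proof: Hensel-lift the simple residue roots of $c_a+A_F(a)y+\mu y^2$ over $R_{k-2}$, obtain $q$ solutions per root at every level with compatible base points, and multiply the level-$3$ multiplicities of Theorem~\ref{thm:S12} by $q^{k-3}$. Two small corrections are needed. First, Lemma~\ref{lem:gate}(i) only makes the gate condition independent of $x_a$, while $c_a$ itself changes by $L_a(\overline{u})$ under $x_a\mapsto x_a+2u$; this is harmless, since you use the same base point at both levels and $L_a$ is additive. Second, for the spectra you also need $E_F=E_{F_3}$ on $Z$, so that $\nu$ is the same for $F$ and $F_3$, as the paper notes at the end of its proof.
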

\begin{proof}
By Lemma~\ref{lem:local}(i), the solutions in the class of $a$ correspond to the $z$ modulo $2^{k-1}$ with
$\Phi_a(z)\equiv0\pmod{2^{k-2}}$, and the residue $c_a+A y+\mu y^2$ of $\Phi_a$ has constant derivative
$A=A_F(a)\ne0$. Every residue root is simple and lifts by Hensel's lemma over $R_{k-2}$ to exactly one root
modulo $2^{k-2}$, which has $q$ lifts modulo $2^{k-1}$. So each residue root gives $q$ solutions. The number of residue
roots depends only on $c_a$ (and on $c_a+E_F(a)$ when $\Lambda(a)=0$), hence only on $\beta\bmod8$. The same
count holds at $k=3$. Each $\beta\bmod 8$ has $q^{k-3}$ lifts. The invariants $A_F$ and $E_F$ depend only on $F$
modulo $8$.
\end{proof}

\begin{theorem}[Lower bounds]\label{thm:M1}
For every lift and every $k\ge3$:
(i) $\Dexc\ge q$; (ii) $\Dexc\ge2q$ if $Z\ne\emptyset$; (iii) $\Dexc\ge q^2$ if $k\ge4$ and $A_F$ has a zero;
(iv) $\Dexc\ge2q$ if $k\ge4$ and $\mu\ne0$.
\end{theorem}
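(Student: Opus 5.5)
The plan is a pigeonhole count for each of the four bounds. Fix an exceptional $\alpha$ and write $D(x)=F(x+\alpha)-F(x)$, so that $\delta_F(\alpha,\beta)=\#D^{-1}(\beta)$. If $D$ maps a set $S\subseteq R_k$ into a set $T$ of values, some $\beta$ has $\delta_F(\alpha,\beta)\ge |S|/|T|$. In each part I take for $S$ one or two residue classes, and bound $|T|$ using the branch gate (Lemma~\ref{lem:gate}) and the local formula (Lemma~\ref{lem:local}).

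For (i), take $S$ to be the residue class of some $a$, so $|S|=q^{k-1}$. By Lemma~\ref{lem:gate}(i), $D(x_a+2u)\equiv D(x_a)\pmod 4$. Hence $D(S)$ lies in a single coset of $4R_k$, which has $q^{k-2}$ elements, and some entry is at least $q$.

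For (ii), pick $a\in Z$ and take $S$ to be the union of the classes of $a$ and $a+\rho$, so $|S|=2q^{k-1}$. By Lemma~\ref{lem:gate}(ii) with $\Lambda(a)=0$, both classes give the same value of $D$ modulo $4$. So again $D(S)$ lies in one coset of $4R_k$, and some entry is at least $2q^{k-1}/q^{k-2}=2q$.

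For (iii), let $A_F(a)=0$ and $k\ge4$. Write the class of $a$ as $x_a+2z$, so that $D(x_a+2z)-D(x_a)=4\Phi_a(z)$ as in Lemma~\ref{lem:local}(i), with $\beta=D(x_a)$. It suffices to show that $\Phi_a$, viewed modulo $2^{k-2}$, takes at most $q^{k-3}$ values. Then the $q^{k-1}$ elements of the class map into at most $q^{k-3}$ values of $D$, which gives an entry at least $q^2$. The key point is that $A_F(a)=0$ makes $g_1\in2R_k$. Also $G^{[2]}(x_a)z^2$ changes by $4(\cdots)$ under $z\mapsto z+2w$, and the terms with $i\ge3$ carry a factor $2$ and change by multiples of $4$. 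Hence $\Phi_a(z+2w)\equiv\Phi_a(z)\pmod 4$, so $\Phi_a(z)\bmod 4$ depends only on $\ol z$. The value modulo $2^{k-2}$ is therefore determined by at most $q$ choices modulo $4$ together with $q^{k-4}$ higher digits, which is at most $q^{k-3}$ values. This step is where I expect the main difficulty: the congruence of the higher Hasse terms modulo $4$ has to be checked carefully, in particular that $2G^{[3]}(x_a)\bigl((z+2w)^3-z^3\bigr)\in4R_k$. Once that is verified the count is immediate.

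For (iv), let $k\ge4$ and $\mu\ne0$. If $A_F$ has a zero, (iii) gives $\Dexc\ge q^2\ge2q$. Otherwise Theorem~\ref{thm:S3} applies. In the case $\mu\ne0$ it shows that every positive entry of an exceptional row equals $2q$ or $4q$, and since the row sums to $q^k>0$ some entry is positive. Hence $\Dexc\ge2q$.
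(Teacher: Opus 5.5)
Your proposal is correct. Parts (i) and (ii) are the paper's pigeonhole arguments. Part (iv) uses the same ingredient as the paper, namely Theorem~\ref{thm:S3} once $A_F$ is zero-free. The paper picks $\beta$ with $c_a=0$, so that $L_a$ has the two roots $0$ and $A_F(a)/\mu$, each giving $q$ solutions. You instead read the bound off the full spectrum together with the row sum $q^k$.

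The real difference is in (iii). The paper argues by translation invariance. For $\ol x=a$ and $h\in2^{k-2}R_k$,
\[
D(x+h)-D(x)=hD'(x)+\sum_{i\ge2}h^iD^{[i]}(x)=0,
\]
because $D'(x)\in4R_k$ and $h^2=0$ for $k\ge4$. So $D$ is constant on each of the $q^{k-3}$ cosets of $2^{k-2}R_k$ inside the class.

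You bound the image instead. Since $\Phi_a\bmod 4$ depends only on $\ol z$, the values of $D$ on the class lie in at most $q$ cosets of $16R_k$. That gives at most $q\cdot q^{k-4}=q^{k-3}$ values for $q^{k-1}$ points, and pigeonhole finishes. The step you flagged as the main difficulty is immediate: $(z+2w)^i-z^i\in2R_k$, and $2^{i-2}$ is even for $i\ge3$.

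The paper's route is shorter and proves more. Every fibre of $D$ on that class is a union of cosets of size $q^2$, so every entry the class contributes to is at least $q^2$, not just the largest. Your route stays inside the local expansion of Lemma~\ref{lem:local}(i) and yields only an averaging bound, which is all the theorem needs. In both arguments the hypothesis $k\ge4$ enters exactly once: as $h^2=0$ in the paper, and as the existence of a digit modulo $4$ in $R_k/2^{k-2}R_k$ in yours.
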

\begin{proof}
(i) By Lemma~\ref{lem:gate}(i) the $q^{k-1}$ differences on a residue class lie in one coset of $4R_k$, which
has $q^{k-2}$ elements. (ii) If $\Lambda(a)=0$ the two classes of the pair, $2q^{k-1}$ points, land in the same
coset of $4R_k$. (iii) Let $A_F(a)=0$ and $\ol x=a$. Then $G_\alpha'(x)\equiv0\pmod4$. For $h\in2^{k-2}R_k$,
$G_\alpha(x+h)-G_\alpha(x)=hG_\alpha'(x)+\sum_{i\ge2}h^iG_\alpha^{[i]}(x)$. The first term lies in
$2^{k-2}\cdot4R_k=0$, and $h^i\in2^{2k-4}R_k=0$ for $k\ge4$. So the difference is constant on the $q^2$ points
of $x+2^{k-2}R_k$. (iv) By (iii) we may assume $A_F$ has no zero. Choose $\beta$ with $c_a=0$. Then
$L_a(y)=0$ has two roots, and by Theorem~\ref{thm:S3} each gives $q$ solutions.
\end{proof}

\section{Minimax, all minimizers and optimality across levels}\label{sec:minimax}

\begin{proof}[Proof of Theorem~\ref{thm:minimax}]
The lower bounds are Theorem~\ref{thm:M1}; for $k=3$ and $\mu\ne0$ only (i) and (ii) are used. For attainment:
if $\mu=0$, make $A_F\equiv1$ (Proposition~\ref{prop:Acontrol}); Theorems~\ref{thm:S12} and~\ref{thm:S3} give
the value $q(1+[Z\ne\emptyset])$. If $\mu\ne0$ and $k=3$, make $A_F\equiv0$; Theorem~\ref{thm:S12}(i) with
$s=q$, $z_0=z$, $\nu=0$ gives the value $q(1+[Z\ne\emptyset])$. If $\mu\ne0$ and $k\ge4$, make $A_F\equiv1$ and
then shift $E_F$ on $Z$ by $d=A_F^2(1+\chi_F)/\mu$ (Proposition~\ref{prop:Econtrol}). This $d$ is
$\rho$-periodic by Lemma~\ref{lem:overlap}, and since $\Tr(1)=1$ for odd $m$ it makes $\chi_F\equiv1$. Then
Theorem~\ref{thm:S3} gives $n_{2q}=q^{k-1}/2$ and the value $2q$.
\end{proof}

\begin{proof}[Proof of Theorem~\ref{thm:minimizers}]
Let $k\ge4$. If $A_F$ has a zero then $\Dexc\ge q^2>4q$ by Theorem~\ref{thm:M1}(iii). If $A_F$ has no zero,
Theorem~\ref{thm:S3} gives the full spectrum: for $\mu=0$ the maximum is the minimax value, and for $\mu\ne0$
it is $2q$ if $\nu=0$ and $4q$ otherwise.
Let $k=3$ and $\mu=0$. If $A_F(a)=0$, a $\beta$ with $c_a=0$ gives $q\cdot q=q^2$ solutions in the class of
$a$, more than $2q$. If $A_F$ has no zero, Theorem~\ref{thm:S12}(ii) gives the minimax value.
Let $k=3$ and $\mu\ne0$. By Theorem~\ref{thm:S12}(i) the maximum is $4q$ if $\nu>0$, otherwise $2q$ if
$n_{2q}>0$, otherwise $q$. If $Z=\emptyset$, then $n_{2q}=(q-s)q/2$ vanishes if and only if $A_F\equiv0$. If
$Z\ne\emptyset$, the value $2q$ is attained if and only if $\nu=0$.
\end{proof}

\begin{example}
A minimizer at $k=3$ need not satisfy the middle-layer condition of Theorem~\ref{thm:O3} in Section~\ref{sec:layers}.
For $m=5$ (field elements written as integers, bit $i$ = coefficient of $t^i$, $t^5=t^2+1$) let
$F_1=c_0+2X^2$, where $c_0$ is a lift with $A\equiv1$ of a representative with $\rho=21$, $\mu=28$ and
$Z=\{12,25\}$ (listed in Online Resource~1). Then $A_{F_1}\equiv1$ and $\chi_{F_1}\equiv1$, and all $1024$ exceptional
rows of $F_1$ consist of $512$ entries equal to $2q=64$.
\end{example}

\paragraph{Optimality across levels.}
Let $k\ge4$, let $F_k\in R_k[X]$ be a lift and let $F_3$ be its coefficientwise reduction modulo $8$, a
\emph{compatible} lift at level $3$. Every $F_3$ has compatible lifts at every level.

\begin{lemma}\label{lem:CL0}
$A_{F_k}=A_{F_3}$, $E_{F_k}=E_{F_3}$ on $Z$, $\chi_{F_k}=\chi_{F_3}$, and $\Lambda$ and $\mu$ coincide.
\end{lemma}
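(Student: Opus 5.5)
The plan is to reduce everything to one observation: each invariant is the residue of an expression in $F$ that is only read modulo $8$. Write $\pi:R_k\to R_3$ for reduction modulo $8$. It is a ring homomorphism compatible with reduction to $\F_q$: for $x\in R_k$ the residue of $\pi(x)$ equals $\ol x$. Applied coefficientwise to polynomials, it commutes with formal differentiation and with Hasse derivatives, so $\pi(F_k(x))=F_3(\pi(x))$ and $\pi(F_k'(x))=F_3'(\pi(x))$. The invariants $\Lambda$, $\mu$ and $Z$ are defined from $p$ alone, so they coincide trivially; the content of the lemma is the statement about $A$, $E$ and $\chi$.

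For $A_F$, fix $a$ and lifts $x,\tau\in R_k$ with $\ol x=a$ and $\ol\tau=\rho$. Put $N=F_k'(x+\tau)-F_k'(x)\in 2R_k$ (Lemma~\ref{lem:A}(ii)). Then $\pi(N)=F_3'(\pi x+\pi\tau)-F_3'(\pi x)$, and $\pi x,\pi\tau$ are lifts of $a,\rho$ in $R_3$. Since $A_{F_3}$ does not depend on the choice of lifts (Lemma~\ref{lem:A}(ii) at level $3$), it suffices to check one thing. If $N=2w$ with $w\in R_k$, then $\pi(N)=2\pi(w)$ and $\ol{\pi(w)}=\ol w$. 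Hence $\ol{N/2}$ computed in $R_k$ equals $\ol{\pi(N)/2}$ computed in $R_3$. The point to state carefully is that the residue of $N/2$ is well defined even though $w$ is only determined modulo $2^{k-1}$. This holds because $2$-torsion in $R_k$ is $2^{k-1}R_k\subseteq 2R_k$ for $k\ge2$, so any two choices of $w$ have the same residue.

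The same argument handles $E_F$. For $a\in Z$ put $S=F_k(x+2\tau)-2F_k(x+\tau)+F_k(x)=4v$ (Lemma~\ref{lem:E}(i)). Then $\pi(S)$ is the corresponding expression for $F_3$ at the lifts $\pi x,\pi\tau$, and $\pi(S)=4\pi(v)$. The residue of $S/4$ is well defined, since the $4$-torsion $2^{k-2}R_k$ lies in $2R_k$ when $k\ge3$. Its value is $\ol v=\ol{\pi(v)}$. By Lemma~\ref{lem:E}(ii) at level $3$, this gives $E_{F_k}(a)=E_{F_3}(a)$.

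Finally, $\chi_F$ is defined from $\mu$, $E_F$ and $A_F$ through $\Tr(\mu E_F(a)/A_F(a)^2)$ on $\{a\in Z: A_F(a)\ne0\}$. Since $A$ and $E$ agree at the two levels, so do the domain of definition and the values, which gives $\chi_{F_k}=\chi_{F_3}$.

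The only step needing care, rather than a genuine obstacle, is the well-definedness of the divided residues in $R_3$ versus $R_k$. In $R_3$, division by $4$ determines $v$ only modulo $2$. That is exactly enough for its residue and agrees with $\ol v$, because $\pi$ maps $v$ to a valid choice of quotient.
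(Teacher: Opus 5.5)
Your proposal is correct and follows essentially the same route as the paper. The paper also applies the reduction homomorphism $R_k\to R_3$ to the representatives in the definitions of $A_F$ and $E_F$, uses that it maps $2g\mapsto 2\psi(g)$ and $4c\mapsto 4\psi(c)$, and invokes the lift-independence in Lemmas~\ref{lem:A} and~\ref{lem:E}. Your explicit check that the divided residues are well defined, because the $2$- and $4$-torsion lies in $2R_k$, spells out a point the paper leaves implicit.
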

\begin{proof}
Reduction $\psi:R_k\to R_3$ is a ring homomorphism compatible with the residue maps. Apply $\psi$ to the
representatives in the definitions of $A_F$ and $E_F$. Then $\psi(2g)=2\psi(g)$ and $\psi(4c)=4\psi(c)$, and by
Lemmas~\ref{lem:A} and~\ref{lem:E} the residues do not depend on the representatives.
\end{proof}

\begin{theorem}\label{thm:CL}
Let $k\ge4$ and suppose $F_3$ minimizes $\Dexc$ at level $3$. Then $F_k$ minimizes $\Dexc$ at level $k$ if and
only if $A_{F_3}$ has no zero. In particular, if $A_{F_3}$ has no zero, every compatible lift of $F_3$ is
optimal at every level $k\ge4$.
\end{theorem}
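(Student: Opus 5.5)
The plan is to transport the invariants from level $3$ to level $k$ with Lemma~\ref{lem:CL0}, and then compare the level-$3$ minimizer criteria of Theorem~\ref{thm:minimizers} with the level-$k$ criteria ($k\ge4$) from the same theorem. By Lemma~\ref{lem:CL0}, $A_{F_k}=A_{F_3}$, $\chi_{F_k}=\chi_{F_3}$, and $\mu$, $Z$ are the same. The statement then becomes a purely combinatorial check on the four cases of the minimax table (Theorem~\ref{thm:minimax}).

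\emph{Necessity.} Suppose $A_{F_3}$ has a zero. Then $A_{F_k}$ has a zero, and Theorem~\ref{thm:M1}(iii) gives $\Dexc(F_k)\ge q^2$. This exceeds the level-$k$ minimum, which is at most $2q<q^2$ since $m\ge5$. So $F_k$ is not optimal, and this direction does not use that $F_3$ is a minimizer.

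\emph{Sufficiency.} Suppose $A_{F_3}$ is zero-free and $F_3$ is optimal at level $3$.
\begin{itemize}
\item If $\mu=0$, Theorem~\ref{thm:minimizers} says that optimality at level $k\ge4$ is exactly ``$A_F$ zero-free''. This holds for $F_k$.
\item If $\mu\ne0$ and $Z\ne\emptyset$, the level-$3$ criterion gives $\chi_{F_3}(a)=1$ at every $a\in Z$ with $A_{F_3}(a)\neq0$, which here is all of $Z$. Hence $\chi_{F_k}\equiv1$ on $Z$. Together with $A_{F_k}$ zero-free, this is the level-$k$ criterion.
\item If $\mu\ne0$ and $Z=\emptyset$, level-$3$ optimality forces $A_{F_3}\equiv0$, which contradicts zero-freeness. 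This case is therefore vacuous.
\end{itemize}
Alternatively, one can argue directly with Theorem~\ref{thm:S3}: when $A_F$ has no zero, $\delta_{F_k}(\alpha,\beta)=\delta_{F_3}(\alpha_3,\beta_3)$, so $\Dexc(F_k)=\Dexc(F_3)$. It then suffices to check that the level-$3$ and level-$k$ minima agree in every case where a zero-free $A_F$ can be a level-$3$ minimizer. By the table they differ only when $\mu\ne0$ and $Z=\emptyset$, and that case was just shown to be vacuous.

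The final sentence of the theorem follows because every compatible lift $F_k$ reduces to the same $F_3$, so the criterion applies uniformly in $k$. The only delicate point is the case $\mu\ne0$, $Z=\emptyset$, where the $k=3$ and $k\ge4$ minima differ. I expect the main care to lie in confirming that the level-$3$ minimizers there (with $A_F\equiv0$) all have zeros, so that the ``if and only if'' is not broken. Beyond that the argument is bookkeeping.
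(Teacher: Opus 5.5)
Your proof is correct and follows essentially the same route as the paper. Both transport $A$, $\chi$, $\mu$ and $Z$ with Lemma~\ref{lem:CL0}, get necessity from the $k\ge4$ bound $\Dexc\ge q^2$ when $A_F$ has a zero, and for sufficiency read off $\chi\equiv1$ on $Z$ from the level-$3$ criterion; the case $\mu\ne0$, $Z=\emptyset$ drops out because a level-$3$ minimizer there has $A\equiv0$. Your alternative argument via Theorem~\ref{thm:S3} is also valid: it gives $\Dexc(F_k)=\Dexc(F_3)$ for zero-free $A$, and you then only compare the minimax table entries.
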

\begin{proof}
By Lemma~\ref{lem:CL0} the invariants agree. Necessity is Theorem~\ref{thm:minimizers} for $k\ge4$. Conversely,
let $A$ have no zero. The level-$k$ criterion asks in addition, when $\mu\ne0$, that $\chi=1$ on $Z$. The level-$3$
criterion for $\mu\ne0$ gives $\chi(a)=1$ at every $a\in Z$ with $A(a)\ne0$, which is all of $Z$. When
$\mu\ne0$ and $Z=\emptyset$, a level-$3$ minimizer has $A\equiv0$, so both sides of the equivalence fail.
\end{proof}

\begin{corollary}\label{cor:CL2}
A compatible family $(F_k)_{k\ge3}$ minimizing $\Dexc$ at every level exists if and only if $\mu=0$ or
$Z\ne\emptyset$.
\end{corollary}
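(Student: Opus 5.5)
The plan is to prove the two directions separately, using Theorem~\ref{thm:CL} for the passage to higher levels and Theorem~\ref{thm:minimizers} at $k=3$. By \emph{compatible family} I mean a sequence with $F_k\equiv F_{k+1}$ coefficientwise modulo $2^k$. Every member then reduces modulo $8$ to the same lift $F_3$, and by Lemma~\ref{lem:CL0} all members share $A_F$, $E_F$, $\chi_F$.

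For sufficiency, I will build one lift $F_3$ that is optimal at level $3$ and has $A_{F_3}$ nowhere zero; Theorem~\ref{thm:CL} then makes every compatible family over it optimal. Such a family exists, for instance the coefficientwise $0/1$ digit extensions of $F_3$. There are two cases.
\begin{itemize}
\item If $\mu=0$, use Proposition~\ref{prop:Acontrol} to make $A_{F_3}\equiv1$. By Theorem~\ref{thm:minimizers} for $k=3$ and $\mu=0$, a zero-free $A_F$ is exactly the optimality criterion, so $F_3$ is optimal.
\item If $\mu\ne0$ and $Z\ne\emptyset$, first make $A_{F_3}\equiv1$. Then shift $E_F$ on $Z$ by the $\rho$-periodic function $d=A_F^2(1+\chi_F)/\mu$ of Proposition~\ref{prop:Econtrol}, as in the proof of Theorem~\ref{thm:minimax}; this keeps $A_F$ and gives $\chi_{F_3}\equiv1$ on $Z$. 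Theorem~\ref{thm:minimizers} at $k=3$ with $Z\ne\emptyset$ asks only for $\chi=1$ where $A\ne0$, which here means everywhere on $Z$, so $F_3$ is optimal.
\end{itemize}

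For necessity, assume $\mu\ne0$ and $Z=\emptyset$, and suppose a compatible family is optimal at every level. At level $3$, Theorem~\ref{thm:minimizers} forces $A_{F_3}\equiv0$. At any $k\ge4$, Lemma~\ref{lem:CL0} gives $A_{F_k}=A_{F_3}\equiv0$, so $A_{F_k}$ has a zero. Theorem~\ref{thm:M1}(iii) then gives $\Dexc(F_k)\ge q^2>2q$, while the minimax value for $k\ge4$ is $2q$ by Theorem~\ref{thm:minimax}. So no family is optimal at level $4$, a contradiction. This is also the last sentence of the proof of Theorem~\ref{thm:CL}.

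The only step needing care is the sufficiency case $\mu\ne0$, $Z\ne\emptyset$. We must check that the $E_F$-shift of Proposition~\ref{prop:Econtrol} is applied to a level-$3$ lift, which needs a polynomial $J$ with prescribed residue $D_\rho j=d^{1/2}$ on $Z$. We must also check that it lifts compatibly, which is automatic once $F_3$ is fixed, because the higher digits play no role in any invariant.
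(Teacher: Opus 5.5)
Your proof is correct and follows the paper's argument: sufficiency builds a level-$3$ minimizer with $A\equiv1$ (and with $\chi\equiv1$ on $Z$ when $\mu\ne0$) and passes it up via Theorem~\ref{thm:CL}. Necessity uses Lemma~\ref{lem:CL0} to show that the $A\equiv0$ forced at level $3$ persists at every level. The only cosmetic difference is that you derive the failure at $k\ge4$ directly from Theorem~\ref{thm:M1}(iii) and the minimax value, whereas the paper cites the $k\ge4$ criterion of Theorem~\ref{thm:minimizers}, which is proved in exactly that way.
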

\begin{proof}
If $\mu=0$, take $F_3$ with $A\equiv1$ and apply Theorem~\ref{thm:CL}. If $\mu\ne0$ and $Z\ne\emptyset$, take
$F_3$ with $A\equiv1$ and $\chi\equiv1$ on $Z$, which minimizes at level $3$, and apply Theorem~\ref{thm:CL}.
If $\mu\ne0$ and $Z=\emptyset$, level $3$ requires $A\equiv0$ and level $k\ge4$ requires $A$ zero-free, for the
same function $A$ by Lemma~\ref{lem:CL0}.
\end{proof}

Theorem~\ref{thm:CL} and Corollary~\ref{cor:CL2} together give Corollary~\ref{cor:crosslevel}. The minimax table
gives the least value at each level. The two results above say when the same coefficients reach these values
at all levels at once, and we keep the two statements separate.

\begin{example}\label{ex:crosslevel}
For $m=5$ let $c_1$ be the lift with $A\equiv0$ of a representative with $\rho=28$, $\mu=17$ and $Z=\emptyset$. The same
integer coefficients give, over $\GR(8,5)$, $1024$ exceptional rows equal to ``$32$ appears $1024$ times''
(optimal value $q$). Over $\GR(16,5)$ every sampled exceptional row (four rows; the lower bound $q^2$ holds for all rows by
Theorem~\ref{thm:M1}(iii)) is ``$1024$ appears $1024$ times'', whereas the optimum is $64$. Lifts with $A$ zero-free behave as predicted by Theorem~\ref{thm:CL}.
\end{example}

\section{The family \texorpdfstring{$P_{m,r}$}{P(m,r)} and one-term optimal lifts}\label{sec:family}

Let $m\ge5$ be odd and $3\le r<m$ odd with $\gcd(r,m)=1$. Put $T_r=\sum_{i<r}X^{2^i}$ and, as
in~\cite[Thm.~3.1]{Fukui2},
\[
P_{m,r}=\sigma^{m-2}\circ X^3\circ T_r,\qquad\sigma(x)=x^2,
\]
reduced modulo $X^q-X$. With $u=2^{m-2}$, $v=2^{m-1}$, $w=2^{r-2}$ and $T=\sum_{i=0}^{r-3}X^{2^i}$ its reduced
form is
\[
p=T+X^w+X^wT+X^{w+u}+X^{w+v}+X^uT+X^{u+v},
\]
and for $r=3$ it is $p=X+X^2+X^3+X^{1+u}+X^{2+u}+X^{2+v}+X^{u+v}$. The natural lift $F$ reads the same
expression with integer coefficients $1$. Its degree is $u+v=3q/4$.

\begin{proposition}[The residue is APN]\label{prop:resAPN}
Put $Y(x)=T(x)+x^u+x^v$. On $\F_q$ we have $p(x)=Y(x)^3$, the map $Y$ is additive, and $Y(x)=0$ only for
$x=0$. Hence $p$ is an APN permutation.
\end{proposition}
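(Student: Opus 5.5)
We have to show three things on $\F_q$: $p=Y^3$, $Y$ is additive, and $\ker Y=\{0\}$. The plan is to compute $T_r$ composed with the Frobenius power first and then expand the cube. Since $\sigma^{m-2}$ is a field automorphism, it commutes with cubing, so
\[
P_{m,r}(x)=\bigl(\sigma^{m-2}(T_r(x))\bigr)^3 .
\]
Now $\sigma^{m-2}(x^{2^i})=x^{2^{i+m-2}}$. Reducing exponents via $x^{2^m}=x$, the term $i=0$ gives $x^{u}$ and the term $i=1$ gives $x^{v}$. For $2\le i\le r-1$ the term gives $x^{2^{i-2}}$, and these are exactly $T(x)+x^{w}$ with $w=2^{r-2}$.

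This yields the additive map $Y_0(x)=T(x)+x^w+x^u+x^v$, and $P_{m,r}=Y_0^3$ as functions. The paper's $Y=T+x^u+x^v$ appears to omit the $x^w$ term. I would check against the reduced form of $p$: expanding $Y_0^3=Y_0^2\cdot Y_0$ with $Y_0^2=T(x)^2+x^{2w}+x^{v}+x$ should reproduce the seven listed monomial groups. If it does not, I would rather read $T$ as including the $X^w$ term where it appears inside $Y$, since the listed expression for $r=3$ contains $X^2=X^w$. Either way the proof uses only that $Y$ is a composition of the linearized $T_r$ with a Frobenius power, so additivity is immediate. The identity $p=Y^3$ as functions is then immediate too: $p$ is by definition the reduction of $\sigma^{m-2}\circ X^3\circ T_r$ modulo $X^q-X$, and the expansion above is only a consistency check.

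For injectivity, $Y=\sigma^{m-2}\circ T_r$ and $\sigma$ is bijective, so it suffices that $\ker T_r=\{0\}$ on $\F_q$. If $T_r(x)=0$, then $0=T_r(x)^2+T_r(x)=x^{2^r}+x$, so $x\in\F_{2^r}\cap\F_{2^m}=\F_{2^{\gcd(r,m)}}=\F_2$. For $x=1$ we get $T_r(1)=r\cdot1=1$ since $r$ is odd. Hence $x=0$, and $Y$ is an additive bijection of $\F_q$.

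Finally, $X^3$ is the Gold function with $\gcd(1,m)=1$. It is APN on $\F_q$, and it is a permutation because $\gcd(3,2^m-1)=1$ for odd $m$. APN and permutation properties are preserved under composition with additive bijections: $\delta_{g\circ Y}(\alpha,\beta)=\delta_g(Y(\alpha),\beta)$. So $p=X^3\circ Y$ is an APN permutation. The only step needing care is the exponent bookkeeping in the first paragraph; the rest is standard.
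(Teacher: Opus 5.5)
There is a genuine error, and it sits in exactly the step you flagged as needing care. For $2\le i\le r-1$ the term $\sigma^{m-2}(x^{2^i})=x^{2^{i-2}}$ has $i-2$ running over $0,\dots,r-3$. These terms therefore add up to $T(x)=\sum_{i=0}^{r-3}x^{2^i}$, not to $T(x)+x^w$. The exponent $w=2^{r-2}$ would require $i=r$, and $x^{2^r}$ is not a term of $T_r$.

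Hence $\sigma^{m-2}\circ T_r=T+x^u+x^v$ is precisely the $Y$ of the statement; equivalently $Y^4=T^4+x+x^2=T_r$, which is how the paper identifies it. Nothing is omitted, and your fallback of reading $T$ as containing $X^w$ inside $Y$ is not just unnecessary but wrong. Your identity $P_{m,r}=Y_0^3$ with $Y_0=Y+x^w$ fails at every $x\ne0$: there $Y_0(x)\ne Y(x)$, and cubing is injective on $\F_q$ for odd $m$. For $r=3$ you can see it directly: $\sigma^{m-2}(x+x^2+x^4)=x^u+x^v+x$, with no $x^2$ term.

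The $X^w$ monomials of $p$ (including $X^2$ when $r=3$) come from squaring, not from $Y$. From $(x^u)^2=x^v$, $(x^v)^2=x$ and $T^2=T+x+x^w$ you get $Y^2=T+x^w+x^v$. Then $Y^3=(T+x^w+x^v)(T+x^u+x^v)$ reduces to the seven displayed groups, once the two copies of $x^vT$ cancel and $x^{2v}=x$ cancels the $x$ coming from $T^2$.

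This expansion is what the paper's proof carries out, and it is more than a consistency check. It certifies the displayed monomials of $p$, on which Proposition~\ref{prop:I123} and the later computations of $A_F$ and $E_F$ for the natural lift rely. Doing it would also have exposed the slip.

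Once $\sigma^{m-2}\circ T_r$ is correctly identified with $T+x^u+x^v$, the rest of your argument is sound and essentially the paper's:
\begin{itemize}
\item injectivity follows from $T_r^2+T_r=x^{2^r}+x$ and $T_r(1)=1$;
\item APN and bijectivity follow from $X^3$ composed with an additive bijection.
\end{itemize}
The paper verifies the APN count directly from $p(t+c)-p(t)=dY(t)^2+d^2Y(t)+d^3$ rather than quoting the Gold exponent, which is equivalent.
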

\begin{proof}
On $\F_q$ we have $(x^u)^2=x^v$ and $(x^v)^2=x$, and $T^2=T+x+x^w$. Expanding $(T+x^u+x^v)^3$ and using these
relations in characteristic $2$ gives the reduced form of $p$. Additivity is clear. Further
$Y^4=T^4+x+x^2$, and $T^4+x+x^2=T_r(x)$. If $T_r(x)=0$ then $T_r+T_r^2=x+x^{2^r}$ gives $x^{2^r}=x$, so
$x\in\F_{2^{\gcd(r,m)}}=\F_2$. Since $T_r(1)=r=1$, we get $x=0$. For $c\ne0$ and $d=Y(c)\ne0$,
$p(t+c)-p(t)=dY(t)^2+d^2Y(t)+d^3$. Two solutions $t,t_0$ of $p(t+c)-p(t)=b$ satisfy
$d(Y(t)-Y(t_0))(Y(t)+Y(t_0)+d)=0$, so $t\in\{t_0,t_0+c\}$. Bijectivity of $p$ follows from that of $Y$
and of $x\mapsto x^3$ for odd $m$.
\end{proof}

This is the route of~\cite[Thm.~3.1]{Fukui2} (the cube is APN, $T_r$ is invertible, and the outer map is a
power of Frobenius), with $\sigma^{m-2}$ absorbed into $Y$. The inner power map is the cube; $r$ only indexes the inner linear map
$T_r$ (an earlier version of our notes wrongly described $P_{m,r}$ as linearly equivalent to $X^{2^r+1}$).

\begin{proposition}\label{prop:I123}
(i) $\rho=1$: $p'=1+X^w+X^u$ for $r\ge5$ and $p'=1+X^2+X^u$ for $r=3$, with $\ker K=\{0,1\}$ and $1\notin K(\F_q)$.
(ii) $\mu=0$ for $r\ge5$ and $\mu=1$ for $r=3$.
(iii) $\Lambda\equiv0$, so $Z=\F_q$.
\end{proposition}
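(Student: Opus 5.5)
The plan is to read off $p'$ from the reduced form, extract $\mu$ from the second Hasse derivative, and verify $\Lambda\equiv0$ through the factorisation $p=Y^3$ of Proposition~\ref{prop:resAPN}. Throughout, $w=2^{r-2}\ge2$, $u=2^{m-2}$ and $v=2^{m-1}$ are even.

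\emph{Step (i).} I will differentiate the reduced form termwise. Over $\F_2$ the monomial $X^e$ contributes $X^{e-1}$ to $p'$ exactly when $e$ is odd. In $p=T+X^w+X^wT+X^{w+u}+X^{w+v}+X^uT+X^{u+v}$ the odd exponents are $1$ (from $T$), $w+1$ (from $X^wT$) and $u+1$ (from $X^uT$). Hence $p'=1+X^w+X^u$ for $r\ge5$. For $r=3$ the odd exponents are $1$, $3$ and $1+u$, so $p'=1+X^2+X^u$; this is the same formula with $w=2$.

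Thus $\varepsilon=1$ and $K(x)=x^w+x^u$. If $K(x)=0$, then $x^{2^{r-2}}=x^{2^{m-2}}$. Squaring twice gives $x^{2^r}=x^{2^m}=x$, so $x\in\F_{2^{\gcd(r,m)}}=\F_2$. Since $K(1)=0$, we get $\ker K=\{0,1\}$ and $\rho=1$. Finally, $\Tr(x^w)=\Tr(x^u)=\Tr(x)$ gives $\Tr\circ K\equiv0$, while $\Tr(1)=m=1$ because $m$ is odd. Hence $1\notin K(\F_q)$.

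\emph{Step (ii).} By Lemma~\ref{lem:F0}(i) and its proof, $p^{[2]}=c_2+c_3X+\sum_{j\ge2}c_{2+2^j}X^{2^j}$. With $\rho=1$ this gives
\[\mu=p^{[2]}(a+1)+p^{[2]}(a)=c_3+\sum_{j\ge2}c_{2+2^j}.\]
So I only need to count the monomials $X^3$ and $X^{2+2^j}$ with $j\ge2$.

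For $r\ge5$ there is no $X^3$. The term $X^wT$ contains $X^{w+2}$, because $r-3\ge1$ and $w\ge8$, and $X^uT$ contains $X^{u+2}$. No other term has this shape: $w+u$ and $w+v$ are sums of two powers of $2$ that are both larger than $2$. Hence $\mu=1+1=0$.

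For $r=3$ the relevant monomials are $X^3$, $X^{2+u}$ and $X^{2+v}$. Hence $\mu=1+1+1=1$.

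\emph{Step (iii).} This is the only step needing an idea rather than bookkeeping, and the plan is to avoid expanding $p(X+1)$ directly. By Proposition~\ref{prop:resAPN}, $p=Y^3$ with $Y$ additive. Put $d=Y(1)=T(1)+1+1=r-2\equiv1$. Then
\[p(a+1)+p(a)=dY(a)^2+d^2Y(a)+d^3=Y(a)^2+Y(a)+1.\]

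It remains to show that $Y^2+Y+1=p'$ as functions on $\F_q$. Using $T^2=T+x+x^w$, $(x^u)^2=x^v$ and $(x^v)^2=x$, we get
\[Y^2=T^2+x^v+x=T+x^w+x^v.\]
Therefore $Y^2+Y=x^w+x^u$, and so $Y^2+Y+1=1+x^w+x^u=p'(x)$. It follows that $\Lambda(a)=p(a+1)+p(a)+p'(a)=0$ for every $a$, so $Z=\F_q$.

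For $r=3$ the same computation applies with $w=2$. The identity $T^2=T+x+x^w$ must be read with $T=X$ in this case: then $T^2=x^2=T+x+x^2$, so it still holds.

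The main point to double-check is this identity $T^2=T+x+x^w$ for all $r$, since both (iii) and the case $r=3$ rest on it.
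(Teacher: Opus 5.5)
Your proof is correct. Parts (i) and (ii) follow the paper's argument: you read off the odd exponents, use $\Tr\circ K\equiv0$ against $\Tr(1)=1$, and count the monomials $X^3$ and $X^{2+2^j}$ through the affine form of $p^{[2]}$. Raising $x^{2^{r-2}}=x^{2^{m-2}}$ to the fourth power instead of applying an inverse Frobenius makes no difference.

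Part (iii) is where you differ from the paper. The paper stays with the reduced form. It uses $D_1X^{2^i}=1$ and $D_1X^{2^a+2^b}=X^{2^a}+X^{2^b}+1$, sums these over all monomials of $p$, and checks that adding $p'$ cancels everything. The oddness of the number $r-2$ of terms of $T$ enters the bookkeeping of the constants and of the blocks $X^wT$ and $X^uT$.

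You instead use the factorization $p=Y^3$ with $Y$ additive from Proposition~\ref{prop:resAPN}. This gives $D_1p=Y^2+Y+1$ as soon as $Y(1)=1$, which is the one place the same parity fact is needed. You then need only the single identity $Y^2+Y=X^w+X^u=K$ on $\F_q$.

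Your route is shorter and explains why $\Lambda$ vanishes identically: the linear part of $p'$ is exactly $Y^2+Y$. It is the mechanism used for cubes of linearized maps in the proof of Theorem~\ref{thm:repfamily}, specialized to $P_{m,r}$, where all the relevant constants equal $1$. The paper's check, in exchange, needs nothing beyond the reduced form and is purely mechanical.

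Both arguments tacitly use that the exponents listed in the reduced form are pairwise distinct. In (ii) you should also say explicitly that the remaining monomials of $X^wT$, $X^uT$ and $X^{u+v}$ are not of the form $X^{2+2^j}$; this is immediate from their binary expansions.
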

\begin{proof}
(i) Only monomials with odd exponent contribute to $p'$. Further, $x^w=x^u$ implies $x^{2^{m-r}}=x$ after applying Frobenius,
so $x\in\F_{2^{\gcd(r,m)}}=\F_2$. Also $K(\F_q)\subseteq\ker\Tr$ while $\Tr(1)=m=1$.
(ii) By Lemma~\ref{lem:F0}(i) only the exponents $2$, $3$ and $2+2^j$ contribute to $p^{[2]}$. This gives
$p^{[2]}=1+X^w+X^u$ for $r\ge5$ and $1+X+X^u+X^v$ for $r=3$.
(iii) In characteristic $2$, $D_1X^{2^i}=1$ and $D_1X^{2^a+2^b}=X^{2^a}+X^{2^b}+1$. Summing these over the
monomials of $p$ and adding $p'$ gives $0$ (for $r\ge5$ the number of terms of $T$ is $r-2$, which is odd).
\end{proof}

\begin{proposition}\label{prop:I45}
Modulo $4$ and on $\F_q$:
(i) for $r\ge5$, $A_F=L+L^2$ with $L=X^{w/2}+X^{u/2}$, and $A_{F+2X^3}=L+L^2+1$, which has no zero;
(ii) for $r=3$, $A_F=1+X+X^{u/2}+X^u+X^v$, which has no zero, and $A$ is unchanged by $2X^{4+v}$.
\end{proposition}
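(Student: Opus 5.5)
The plan is to compute $A_F$ for the natural lift directly from the definition in Lemma~\ref{lem:A}, using $\rho=1$ from Proposition~\ref{prop:I123}(i). Then I would deduce the effect of the correction terms from Proposition~\ref{prop:Acontrol}(i), and finish with a trace argument for zero-freeness.

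First, by Lemma~\ref{lem:A}(ii) $A_F(a)$ does not depend on the lifts $x$ and $\tau$. So I take $\tau=1$ and $x$ the Teichmüller lift of $a$, which satisfies $x^{q}=x$ exactly in $R_k$. Then
\[
A_F(a)=\ol{\tfrac12\sum_{e}e\bigl((x+1)^{e-1}-x^{e-1}\bigr)},
\]
where $e$ runs over the exponents of $p$, all with coefficient $1$. Expand each term binomially as $\sum_{i\ge1}e\binom{e-1}{i}x^{e-1-i}$ and reduce the integer coefficients modulo $4$. For exponents of the form $2^a+2^b$ or $2^a$, Lucas/Kummer-type congruences for $\binom{n}{i}\bmod 4$ leave only few surviving terms. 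Since $x$ is Teichmüller, $x^j$ depends only on $j\bmod (q-1)$ (for $j\ge1$), so like powers can be collected. The total coefficient of each collected power is even, because the residue of the whole sum is $p'(a+1)+p'(a)=0$. Dividing by $2$ and reducing then gives a linearized-plus-constant expression in $a$. Squaring relations such as $(a^{w/2})^2=a^w$ and $a^{2u}=a^{v}$ should put it in the claimed shape: $L+L^2$ with $L=a^{w/2}+a^{u/2}$ when $r\ge5$, and $1+a+a^{u/2}+a^u+a^v$ when $r=3$.

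Second, the corrections. For $F+2X^3$, Proposition~\ref{prop:Acontrol}(i) gives $A_{F+2X^3}=A_F+D_1h$, where $h$ is the residue function of $3X^2$, namely $a^2$. Since $D_1a^2=1$, this adds the constant $1$. For $F+2X^{4+v}$, the derivative $(4+v)X^{3+v}$ has even coefficient because $m\ge5$. Hence $h=0$ and $A_F$ is unchanged.

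Third, zero-freeness.
\begin{itemize}
\item For $r\ge5$: $\Tr(L+L^2)=0$ while $\Tr(1)=m\equiv1$, so $L+L^2+1$ never vanishes.
\item For $r=3$: substitute $y=a^{2^{m-3}}$, a bijection, so that $a^{u/2}=y$, $a^u=y^2$, $a^v=y^4$ and $a=y^8$. The expression becomes $1+y+y^2+y^4+y^8$. A zero would force $\Tr(y+y^2+y^4+y^8)=4\Tr(y)=0$ to equal $\Tr(1)=1$, which is a contradiction.
\end{itemize}

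The main obstacle is the first step. It requires careful bookkeeping of the binomial coefficients modulo $4$ across the roughly $r+3$ monomials, especially the cross terms of $X^wT$ and $X^uT$ and the reductions of exponents modulo $q-1$, so that the evenness cancellation becomes visible. I would first handle a single monomial $X^{2^a+2^b}$ in general, obtaining its contribution $\tfrac12(\cdots)$ as an explicit sum of at most three powers plus a ``carry'' term. I would then sum these contributions over the reduced form of $p$ and cross-check the result at $m=5$.
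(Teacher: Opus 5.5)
Your route is the paper's: take $\tau=1$, compute $F'(x+1)-F'(x)$ modulo $4$ monomial by monomial, get the effect of $2X^3$ from Proposition~\ref{prop:Acontrol}(i) via $D_1(X^2)=1$, and finish with a trace argument. Your two deviations are harmless. For $2X^{4+v}$ you apply Proposition~\ref{prop:Acontrol}(i) with $H'=(4+v)X^{3+v}\in2R_k[X]$. This is valid and more direct than the paper's appeal to Proposition~\ref{prop:Econtrol} with $J=X^{2+u}$. The substitution $y=a^{2^{m-3}}$ is unnecessary, since $\Tr(a^{2^i})=\Tr(a)$ gives $\Tr(1+a+a^{u/2}+a^u+a^v)=1+4\Tr(a)=1$ directly.

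The gap is that the formulas for $A_F$, which are the actual content of the proposition, are only announced (``should put it in the claimed shape''), not derived. The computation is much shorter than you fear. The factor $e$ in $F'=\sum_e eX^{e-1}$ kills every monomial with $4\mid e$. Since $w,u,v\equiv0\pmod4$, for $r\ge5$ only $X,X^2,X^{w+1},X^{w+2},X^{u+1},X^{u+2}$ survive. So of the cross terms $X^{w+2^i}$, $X^{u+2^i}$ only $i\le1$ matter, and
\[
F'\equiv1+2X+X^w+2X^{w+1}+X^u+2X^{u+1}\pmod4.
\]
For $r=3$ one gets $F'\equiv1+2X+3X^2+X^u+2X^{u+1}+2X^{v+1}\pmod4$. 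Two congruences finish the job:
\[
(X+1)^{2^j}\equiv X^{2^j}+1+2X^{2^{j-1}}\pmod4\quad(j\ge1,\ \text{by induction}),
\]
\[
2(X+1)^{2^j+1}\equiv2(X^{2^j}+1)(X+1)\pmod4.
\]
They give, modulo $4$, the polynomial congruences $F'(X+1)-F'(X)\equiv2(X^{w/2}+X^w+X^{u/2}+X^u)$ for $r\ge5$ and $F'(X+1)-F'(X)\equiv2(1+X+X^{u/2}+X^u+X^v)$ for $r=3$.

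You need no Teichm\"uller lift and no reduction of exponents modulo $q-1$, because every exponent stays below $q$. The coefficients are already even at the polynomial level, because $p'(X+1)+p'(X)=0$ in $\F_2[X]$. This is cleaner than inferring evenness from the vanishing of the residue function. With this computation filled in, your argument is complete.
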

\begin{proof}
We use $(X+1)^{2^j}\equiv X^{2^j}+1+2X^{2^{j-1}}\pmod 4$ for $j\ge1$ (induction on $j$). For $r\ge5$,
$F'\equiv1+2X+X^w+2X^{w+1}+X^u+2X^{u+1}\pmod4$, and substituting $X+1$ gives the difference
$2(X^{w/2}+X^w+X^{u/2}+X^u)$. The term $2X^3$ adds $D_1(X^2)=1$ by Proposition~\ref{prop:Acontrol}(i). Since
$\Tr(L+L^2)=0$ and $\Tr(1)=1$, the function $L+L^2+1$ has no zero. For $r=3$ the same computation gives
$A_F=1+X+X^{u/2}+X^u+X^v$, whose trace is $1+4\Tr(X)=1$. The correction $2J^2$ with $J=X^{2+u}$ keeps $A$ by
Proposition~\ref{prop:Econtrol}.
\end{proof}

\begin{proposition}\label{prop:I678}
Let $r=3$, $Y_0=X^{u/2}$, $U=X^u$, $V=X^v$.
(i) $E_F=X+X^2+Y_0+XY_0+X^2Y_0+X^2U+V+Y_0V$.
(ii) For $F^\sharp=F+2X^{4+v}$, $E_{F^\sharp}=E_F+X^4+V+1$. With $\varphi=1+X^2+XV+Y_0V+UV$ and
$A=1+X+Y_0+U+V$ we have $\varphi^2+A\varphi=E_{F^\sharp}+A^2$ on $\F_q$. Hence $\chi_{F^\sharp}\equiv1$.
(iii) $E_F(0)=0$ and $A_F(0)=1$, so $\chi_F(0)=0$, and the natural lift has $\nu\ge2$.
\end{proposition}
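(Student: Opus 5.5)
We compute $E_F$ directly from its definition, with $a=\ol x$, $\tau=1$ (here $\rho=1$). By Lemma~\ref{lem:E}(ii) the residue of $\tfrac14\bigl(F(x+2)-2F(x+1)+F(x)\bigr)$ does not depend on the lift of $x$. Since $F$ is the natural lift with $0/1$ coefficients and $Z=\F_q$ (Proposition~\ref{prop:I123}(iii)), $E_F$ is additive over the monomials modulo $8$. The only catch is that an individual monomial need not give a numerator in $4R_k$; only the sum does. So we will expand everything modulo $8$ as integer polynomials and divide by $4$ at the end.

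\emph{Monomial expansions modulo $8$.} For the Frobenius-type exponents we will use a refinement of the congruence in Proposition~\ref{prop:I45}. By induction on $j$,
\[
(X+1)^{2^j}\equiv X^{2^j}+1+2X^{2^{j-1}}+c_jX^{2^{j-2}}+\cdots\pmod 8 .
\]
Expanding $(X+1)^{2^j}$ and $(X+2)^{2^j}$ modulo $8$ for $j\ge2$ gives
\[
(X+2)^{2^j}\equiv X^{2^j}+2^{j+1}X^{2^j-1}+\cdots\equiv X^{2^j}\pmod 8 \quad\text{for }j\ge2 .
\]
Products such as $X^{2+u}$, $X^{1+u}$, $X^{2+v}$ and $X^{u+v}$ are then handled by multiplying the truncated expansions of their two factors. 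Monomials with small exponents ($X$, $X^2$, $X^3$) are computed by hand.

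\emph{Assembling $E_F$.} Summing the contributions of the seven monomials of $p=X+X^2+X^3+X^{1+u}+X^{2+u}+X^{2+v}+X^{u+v}$ gives a numerator that should lie in $4R_k$. We divide by $4$ and reduce modulo $2$. On $\F_q$ we then simplify using $U^2=V$, $V^2=X$, $Y_0^2=U$. This should give (i).

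\emph{Part (ii).} The correction $2X^{4+v}$ equals $2J^2$ with $J=X^{2+u}$, since $2(2+u)=4+2u=4+v$. Proposition~\ref{prop:Econtrol} then shifts $E$ by $(D_1 j)^2$ with $j=X^{2+u}$. Over $\F_q$ we have $D_1X^{2+u}=X^2+X^u+1$, whose square is $X^4+X^v+1$. This gives $E_{F^\sharp}=E_F+X^4+V+1$.

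The identity $\varphi^2+A\varphi=E_{F^\sharp}+A^2$ is a polynomial identity on $\F_q$. We verify it by expanding $\varphi^2$ and $A\varphi$ and reducing exponents with $U^2=V$, $V^2=X$, $Y_0^2=U$, $x^q=x$.

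Given the identity, $\chi_{F^\sharp}\equiv1$ follows. With $\mu=1$ and $A$ zero-free (Proposition~\ref{prop:I45}(ii)), put $t=\varphi/A$. Then
\[
\mu E/A^2=(\varphi^2+A\varphi)/A^2+1=t^2+t+1,
\]
so its trace is $\Tr(1)=1$ because $m$ is odd.

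\emph{Part (iii).} Evaluating (i) at $x=0$ gives $E_F(0)=0$, since every term has a positive power of $X$. Also $A_F(0)=1$ from Proposition~\ref{prop:I45}(ii), hence $\chi_F(0)=\Tr(0)=0$. By Lemma~\ref{lem:overlap}, $\chi_F$ is $\rho$-periodic, so $\chi_F(1)=0$ as well, and therefore $\nu\ge2$.

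\emph{Main obstacle.} The hard step is (i): the modulo-$8$ bookkeeping for $X^{1+u},X^{2+u},X^{2+v},X^{u+v}$. Cross terms between the $2$-adic digits of the two factors are easy to drop. I would first derive exact modulo-$8$ formulas for $(X+1)^{2^j}$ and $(X+2)^{2^j}$, then treat each product uniformly, and check the final polynomial against a small case ($m=5$) numerically.
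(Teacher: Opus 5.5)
Your proposal is correct and follows the paper's proof. Parts (ii) and (iii) are argued exactly as there: Proposition~\ref{prop:Econtrol} with $J=X^{2+u}$, the identity checked as a polynomial identity in powers of $Y_0$, $\Tr(t^2+t+1)=\Tr(1)=1$, and $\rho$-periodicity of $\chi_F$. Part (i) is the same direct expansion of $F(x+2)-2F(x+1)+F(x)$.

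The paper organizes (i) through $S(x)\equiv2[F(x)-F(x+1)+F'(x)]+4F^{[2]}(x)\pmod 8$, that is, $E_F=\ol{(F(X+1)-F(X)-F'(X))/2}+p^{[2]}$. This shows $F(x+1)$ is only needed modulo $4$, so your planned mod-$8$ formula for $(X+1)^{2^j}$ is unnecessary. Its inaccuracy is also harmless: for $j\ge2$ the coefficient of $X^{2^{j-1}}$ modulo $8$ is $\binom{2^j}{2^{j-1}}\equiv6$, not $2$, but this term only enters through $2F(x+1)$.
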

\begin{proof}
(i) Modulo $8$, $S(x)\equiv2[F(x)-F(x+1)+F'(x)]+4F^{[2]}(x)$, so
\[E_F=\ol{(F(X+1)-F(X)-F'(X))/2}+p^{[2]}.\]
The numerator is even because $\Lambda\equiv0$, and expanding it
modulo $4$ gives the stated form. (ii) $D_1(X^{2+u})=X^2+U+1$, so $(D_1j)^2=X^4+V+1$ by
Proposition~\ref{prop:Econtrol}. The identity is an identity in $\F_2[t]$ after the substitution $Y_0=t$,
$U=t^2$, $V=t^4$, $X=t^8$, which is valid on $\F_q$ because $V^2=X$. It gives
\[\Tr(\mu E_{F^\sharp}/A^2)=\Tr\bigl(1+(\varphi/A)^2+\varphi/A\bigr)=1.\]
(iii) Direct evaluation, and $\chi$ is $\rho$-periodic
(Lemma~\ref{lem:overlap}).
\end{proof}

\begin{proof}[Proof of Theorem~\ref{thm:oneterm}]
The correction is divisible by $2$, so the residue is unchanged and $F^\sharp$ is a permutation. The correction
has degree $3$ or $q/2+4$, both below $3q/4$, so $F^\sharp$ keeps the degree $3q/4$ of $F$. For $r\ge5$: $\mu=0$, $Z=\F_q$, and $A_{F^\sharp}$ has no zero
(Propositions~\ref{prop:I123} and~\ref{prop:I45}). Theorem~\ref{thm:S3} gives $n_{2q}=q^{k-1}/2$ and
$n_q=0$. For $r=3$: $\mu=1$, $Z=\F_q$, $A$ has no zero and $\chi\equiv1$ (Proposition~\ref{prop:I678}),
so $\nu=0$ and $n_{2q}=q^{k-1}/2$. Ordinary directions are covered by Lemma~\ref{lem:ordinary} and
Proposition~\ref{prop:resAPN}. Optimality is the row $Z\ne\emptyset$ of Theorem~\ref{thm:minimax}. For the
natural lift with $r=3$, $A_F$ has no zero, so Theorem~\ref{thm:S3} bounds every entry by $4q$, and
Proposition~\ref{prop:I678}(iii) gives $n_{4q}>0$, so $\Dexc=4q$. For $r\ge5$,
$A_F=L+L^2$ vanishes where $L\in\F_2$. Theorem~\ref{thm:M1}(iii) and the case $k=3$, $\mu=0$ in the proof of
Theorem~\ref{thm:minimizers} give $\Dexc\ge q^2$.

Cost: we count operations in the \emph{termwise evaluation model}, in which the squares $x^{2^i}$ ($1\le i\le m-1$)
are computed once, each monomial is formed from them by ring multiplications, multiplied by its coefficient when
the coefficient is not $1$, and the terms are added. The squares are already computed for $F$. For $r\ge5$,
$x^3=x\cdot x^2$; for $r=3$, $x^{4+v}=x^4\cdot x^{2^{m-1}}$. Each costs one ring multiplication. Doubling and adding cost
two additions. Other evaluation circuits of the same polynomial (factored forms, shared products, hardware) have their
own costs, which this count does not address.
\end{proof}

\begin{corollary}[The natural lift at $k=3$]\label{cor:natural}
Let $r\ge5$, let $F$ be the natural lift of $P_{m,r}$ over $R_3=\GR(8,m)$ and fix an exceptional $\alpha$. Then
$n_{2q}=q(q-2)/2$, and either $n_{q^2}=2$ (if $r\equiv1\pmod4$) or $n_{2q^2}=1$ (if $r\equiv3\pmod4$). All other
positive values have multiplicity zero. In particular $\Dexc(F)=q^2$ if $r\equiv1\pmod4$ and $\Dexc(F)=2q^2$ if
$r\equiv3\pmod4$.
\end{corollary}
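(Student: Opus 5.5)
The plan is to apply the $k=3$ analysis in the proof of Theorem~\ref{thm:S12} directly. That case split already covers $\mu=0$ with $A_F$ allowed to vanish; it just was not written out. Here $\rho=1$, $\mu=0$ and $\Lambda\equiv0$, so $Z=\F_q$ (Propositions~\ref{prop:I123}). Hence every pair $\{a,a+1\}$ passes the gate of Lemma~\ref{lem:gate} for one common value of $\beta\bmod4$. Distinct pairs correspond to distinct residues $\ol\beta$ (Lemma~\ref{lem:F0}(iii)), so the row splits into independent contributions, one from each pair. Within a pair, Lemma~\ref{lem:local}(ii),(iii) gives the entry as
\[
q\bigl(\#L_a^{-1}(c)+\#L_a^{-1}(c+E_F(a))\bigr),\qquad L_a(y)=A_F(a)y .
\]

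\textbf{Pairs with $A_F(a)\ne0$.} Here $L_a$ is bijective, so each of the $q$ values of $c$ gives the entry $2q$.

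\textbf{Pairs with $A_F(a)=0$.} Here $L_a\equiv0$, so the entry is $q^2\bigl([c=0]+[c=E_F(a)]\bigr)$. If $E_F(a)\ne0$ this gives two entries $q^2$. If $E_F(a)=0$ it gives a single entry $2q^2$. All other entries from this pair are zero.

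\textbf{Locating the zeros of $A_F$.} By Proposition~\ref{prop:I45}(i), $A_F=L+L^2$ with $L(x)=x^{2^{r-3}}+x^{2^{m-3}}$. So $A_F(a)=0$ exactly when $L(a)\in\F_2$.
\begin{itemize}
\item The map $L$ is additive. Its kernel is $\{0,1\}$, because $x^{2^{r-3}}=x^{2^{m-3}}$ forces $x\in\F_{2^{\gcd(r,m)}}=\F_2$, as in Proposition~\ref{prop:I123}(i).
\item $\Tr(L(x))=\Tr(x)+\Tr(x)=0$, while $\Tr(1)=1$ for odd $m$. So $L=1$ has no solution.
\end{itemize}
Therefore $A_F^{-1}(0)=\{0,1\}$, a single pair. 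The remaining $(q-2)/2$ pairs each contribute $q$ entries $2q$, which gives $n_{2q}=q(q-2)/2$. As a check, the total mass $q^3$ leaves exactly $2q^2$ for the pair $\{0,1\}$.

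\textbf{Deciding whether $E_F(0)$ vanishes.} Whether that pair contributes two entries $q^2$ or one entry $2q^2$ depends only on $E_F(0)$. Proposition~\ref{prop:Etransl} gives $E_F(1)+E_F(0)=A_F(0)=0$, so $E_F(0)=E_F(1)$ and it is enough to evaluate at $0$. I would reuse the mod-$8$ identity from the proof of Proposition~\ref{prop:I678}(i). It needs only $\rho=1$ and $\Lambda\equiv0$, so it holds for $r\ge5$ as well, and it reads
\[
E_F=\ol{(F(X+1)-F(X)-F'(X))/2}+p^{[2]}.
\]
Evaluate at $X=0$ as an integer computation in $\Z/4$:
\begin{itemize}
\item $F(0)=0$.
\item $F'(0)=1$, the coefficient of $X$, which comes from $T$.
\item $p^{[2]}(0)=1$ by Proposition~\ref{prop:I123}(ii).
\item $F(1)$ is the number of monomials of the natural lift, read off from the reduced form of $p$.
\end{itemize}
The sets $T$, $X^wT$ and $X^uT$ each have $r-2$ terms, and there are $4$ further monomials. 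I must check that these exponents are pairwise distinct, which is the step I would verify most carefully. If they are, $F(1)=3r-2$ and
\[
E_F(0)\equiv\tfrac{3r-3}{2}+1\pmod2 .
\]
This is $1$ for $r\equiv1\pmod4$ and $0$ for $r\equiv3\pmod4$, for example $r=5$ gives $7$ and $r=7$ gives $10$. That yields $n_{q^2}=2$ in the first case and $n_{2q^2}=1$ in the second, and hence the stated values of $\Dexc(F)$.

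The main obstacle is this last parity computation. It is sensitive to the exact monomial count of the natural lift and to the validity of the mod-$8$ formula for $E_F$. I would confirm both against a small case, say $m=7$, $r=5$.
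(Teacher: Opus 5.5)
Your proposal is correct and follows the paper's proof essentially step by step. You use the same pair-by-pair count at $k=3$ with $L_a(y)=A_F(a)y$, the same identification $A_F^{-1}(0)=\{0,1\}$ via $\ker L=\F_2$ and $\Tr L=0\ne\Tr(1)$, and the same parity of $E_F(0)$ driven by the monomial count $3r-2$; the exponents you worried about are indeed pairwise distinct, since their binary supports are distinct subsets of $\{0,\dots,r-2\}\cup\{m-2,m-1\}$. The one small difference is cosmetic: the paper evaluates $E_F(0)=\ol{(F(2)-2F(1)+F(0))/4}$ directly from $F(2)\equiv6\pmod 8$, while you use the equivalent Taylor-rearranged identity from Proposition~\ref{prop:I678}(i), which carries the same coefficients of $X$ and $X^2$ through $F'(0)=1$ and $p^{[2]}(0)=1$.
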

\begin{proof}
By Propositions~\ref{prop:I123} and~\ref{prop:I45}, $\rho=1$, $\mu=0$, $Z=\F_q$ and $A_F=L+L^2$ with
$L=X^{w/2}+X^{u/2}$. If $L(a)=0$, raising $a^{2^{r-3}}=a^{2^{m-3}}$ to the power $8$ gives $a^{2^r}=a^{2^m}=a$, so
$a\in\F_{2^{\gcd(r,m)}}=\F_2$. The value $L(a)=1$ is impossible, since $\Tr L(a)=\Tr(a)+\Tr(a)=0$ while $\Tr(1)=1$. Hence
$A_F(a)=0$ exactly for $a\in\{0,1\}$.

Fix a pair $\{a,a+1\}$ and $\beta$ with $\ol\beta=D_1p(a)$; only this pair contributes (Lemma~\ref{lem:F0}(iii)).
Since $\Lambda\equiv0$, both classes pass the gate for the same $\beta\bmod4$ (Lemma~\ref{lem:gate}), and
$c_{a+1}=c_a+E_F(a)$ (Lemma~\ref{lem:local}(iii)). If $a\notin\{0,1\}$, then $L_a(y)=A_F(a)y$ is bijective, and by
Lemma~\ref{lem:local}(ii) each class contributes $q$ for each of the $q$ values of $c_a$. This gives $q$ values of
$\beta$ with entry $2q$ for each of the $(q-2)/2$ such pairs. For the pair $\{0,1\}$, $L_0\equiv0$, so the class of
$a$ contributes $q\cdot q=q^2$ if $c_a=0$ and nothing otherwise. With the lifts $x=0$, $\tau=1$ in Lemma~\ref{lem:E},
$E_F(0)$ is the residue of $(F(2)-2F(1)+F(0))/4$. The natural lift has $3r-2$ monomials, one with exponent $1$, one
with exponent $2$ and all others with exponent at least $4$, so $F(1)=3r-2$, $F(2)\equiv6\pmod8$ and $F(0)=0$. Thus
$F(2)-2F(1)+F(0)\equiv10-6r\pmod8$, and $E_F(0)=(5-3r)/2\bmod2$, which is $1$ for $r\equiv1\pmod4$ and $0$ for
$r\equiv3\pmod4$. In the first case $c_0=0$ and $c_0=1$ each give one entry $q^2$; in the second case $c_0=0$ gives one
entry $2q^2$. As a check, $2q\cdot q(q-2)/2+2q^2=q^3$.
\end{proof}

Corollary~\ref{cor:natural}, multiplicities included, is formalized (Appendix~\ref{app:verification}). It agrees
with the full computation of all $16384$ exceptional rows of the natural lift of $P_{7,5}$
($n_{2q}=8064$, $n_{q^2}=2$). For $r\equiv3\pmod4$ we checked sampled rows of the natural lift of $P_{9,7}$ over
$\GR(8,9)$ (Appendix~\ref{app:verification}).

For $r\ge5$ the lift $F^\sharp$ also optimizes the $k=3$ middle layer and makes the penultimate layer optimally flat
(Theorem~\ref{thm:layers}, $\mu=0$). For $r=3$ we have $\mu=1$, so an optimal middle layer
would need $A\equiv0$, whereas $A_{F^\sharp}$ has no zero (Theorem~\ref{thm:minimizers}).

\section{The lower layers}\label{sec:layers}

\begin{theorem}[Middle layer, $k=3$]\label{thm:O3}
Let $k=3$ and $\alpha=2u$ with $u$ a unit. Then $F(x+\alpha)-F(x)$ depends only on $\ol x$, and residues $a$ and $b$ give the same value
if and only if $b=a$, or $b=a+\rho$ and $\ol u A_F(a)+\ol u^2\mu=0$. Hence all nonzero entries of every row in
$v_1$ equal $q^2$ if and only if $A_F\equiv0$ when $\mu\ne0$, and $A_F$ has no zero when $\mu=0$.
\end{theorem}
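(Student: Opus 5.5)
The plan is to expand $G(x)=F(x+2u)-F(x)$ in Hasse derivatives over $R_3=\GR(8,m)$. We have
\[
F(x+2u)-F(x)=2uF'(x)+4u^2F^{[2]}(x)+8(\cdots),
\]
and the last term vanishes, so $G(x)=2uF'(x)+4u^2F^{[2]}(x)$. Replacing $x$ by $x+2w$ changes $F'(x)$ modulo $4$ by nothing (Lemma~\ref{lem:A}(i)). It changes $F^{[2]}(x)$ modulo $2$ by nothing either, since $F^{[2]}(x+2w)\equiv F^{[2]}(x)\pmod 2$. Hence $G$ depends only on $\ol x$.

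Next, compare two residues $a,b$. If $\ol G$ agrees, then $p'(a)\ol u$ and $p'(b)\ol u$ coincide after dividing by $2$: the residue of $G(x)/2$ is $\ol u\,p'(a)$. By (H), $p'(a)=p'(b)$ forces $b\in\{a,a+\rho\}$. Conversely, for $b=a+\rho$ with lifts $x$ and $x+\tau$, I compute
\[
\bigl(G(x+\tau)-G(x)\bigr)/4=u\,\bigl(F'(x+\tau)-F'(x)\bigr)/2+u^2\bigl(F^{[2]}(x+\tau)-F^{[2]}(x)\bigr).
\]
Its residue is $\ol u A_F(a)+\ol u^2\mu$, using the definition of $A_F$ and Lemma~\ref{lem:F0}(i). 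Since $G(x+\tau)-G(x)\in4R_3$ already, equality in $R_3$ means this residue vanishes. This gives the first claim.

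For the row structure, each residue class has $q^2$ points, all mapping to one value. So an entry is $q^2$ or $2q^2$, and every value is $q^2$ exactly when, for every unit $u$ and every $a$,
\[
\ol u A_F(a)+\ol u^2\mu\neq0.
\]
Here $\ol u$ ranges over all of $\F_q^*$, since the direction ranges over all of $v_1$ and all rows are considered. If $\mu=0$, the condition reads $\ol uA_F(a)\ne0$ for all $\ol u$, which means $A_F$ has no zero. If $\mu\ne0$, the expression vanishes exactly at $\ol u=A_F(a)/\mu$. To avoid this for every nonzero $\ol u$ we need $A_F(a)/\mu\notin\F_q^*$, that is, $A_F(a)=0$. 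So the condition is $A_F\equiv0$.

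I expect the main obstacle to be the careful check that the residue of $G(x)/2$ is $\ol u p'(a)$ and that the $b\neq a$ comparison only involves the first-order term. The stated form "if and only if" suggests that the middle-layer entries are indexed over all units $u$. I would also confirm that entries are only $q^2$ or $2q^2$, because $D_\rho$-type collisions beyond pairs are excluded by (H).
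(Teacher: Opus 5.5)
Your proposal is correct and follows essentially the same route as the paper: the Hasse expansion $F(x+2u)-F(x)=2uF'(x)+4u^2F^{[2]}(x)$ in $R_3$, the comparison modulo $4$ through $\ol u\,p'(a)$ and (H) to reduce to the pair $\{a,a+\rho\}$, the difference $4(\ol uA_F(a)+\ol u^2\mu)$ across that pair, and finally letting $\ol u$ run over $\F_q^*$. One point you use implicitly: in $R_3$, halving an element of $2R_3$ is well defined modulo $4R_3$, and quartering an element of $4R_3$ is well defined modulo $2R_3$, so the residues you compare are well defined.
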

\begin{proof}
$F(x+2u)-F(x)=2uF'(x)+4u^2F^{[2]}(x)+8(\cdots)$. Both terms depend only on $\ol x$ modulo $8$
(Lemma~\ref{lem:A}(i)). Modulo $4$ the value is the lift of $2\ol up'(a)$, so equal values force
$b\in\{a,a+\rho\}$. For $b=a+\rho$ and $x_b=x_a+\tau$ the difference is $\equiv4(\ol uA_F(a)+\ol u^2\mu)\pmod 8$.
As $\ol u$ runs over $\F_q^*$, the condition ``never zero'' means $A=0$ if $\mu\ne0$ and $A\ne0$ if $\mu=0$.
\end{proof}

\begin{theorem}[Penultimate layer]\label{thm:M4}
Let $k\ge4$ and $\alpha\in2^{k-2}R_k\setminus2^{k-1}R_k$. Every row has $q-s$ entries $q^{k-1}$ and $s/2$ entries
$2q^{k-1}$. In particular the penultimate layer is optimally flat (all nonzero entries equal to $q^{k-1}$) if and
only if $A_F$ has no zero.
\end{theorem}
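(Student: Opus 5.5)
Write $\alpha=2^{k-2}u$ with $u$ a unit and put $h=2^{k-2}$, so $\alpha=hu$. The plan is to expand $F(x+\alpha)-F(x)$ by Hasse derivatives and truncate using $h^2=2^{2k-4}$. For $k\ge4$ we have $2k-4\ge k$, so $h^i=0$ for $i\ge2$. Hence exactly
\[
  F(x+\alpha)-F(x)=huF'(x).
\]
This value depends only on $x$ modulo $2^2$ (since $h\cdot 4=0$). By Lemma~\ref{lem:A}(i), $F'(x)\bmod 4$ depends only on $\ol x$, so the difference is constant on each residue class. Each class has $q^{k-1}$ elements, so every nonzero entry is a multiple of $q^{k-1}$.

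Next I would decide when two residues $a\ne b$ give the same value. The value is $2^{k-2}\,u F'(x_a)$ computed modulo $2^k$, i.e.\ determined by $uF'(x_a)\bmod 4$.
\begin{itemize}
\item Modulo $2$ it is $\ol u\,p'(a)$. By (H), equality forces $b\in\{a,a+\rho\}$.
\item For $b=a+\rho$, take $x_b=x_a+\tau$. The values then differ by $2^{k-2}u\bigl(F'(x_a+\tau)-F'(x_a)\bigr)=2^{k-1}u\cdot\tfrac12(\cdots)$, whose relevant digit is $\ol u\,A_F(a)$.
\end{itemize}
So the two classes collide if and only if $A_F(a)=0$, independently of $u$.

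It remains to count, using that $A_F$ is $\rho$-periodic (Lemma~\ref{lem:A}(iii)), so its zero set is a union of $s/2$ pairs. Distinct values come from different residues not paired, and $a\mapsto \ol u p'(a)$ separates non-paired residues. Each zero pair therefore gives one $\beta$ with entry $2q^{k-1}$, and each of the $q-s$ remaining residues gives its own $\beta$ with entry $q^{k-1}$. As a check, $(q-s)q^{k-1}+(s/2)\cdot2q^{k-1}=q^k$. The optimal-flatness statement follows immediately, since entries are at least $q^{k-1}$ (the class constancy), with equality throughout exactly when $s=0$.

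The main obstacle is the careful bookkeeping in the collision step. One must check that the difference $2^{k-2}u\bigl(F'(x_a+\tau)-F'(x_a)\bigr)$ reads off exactly the residue $\ol u A_F(a)$ in the top digit, and that this digit is independent of the chosen lifts. The independence comes from Lemma~\ref{lem:A}(ii) together with the fact that only $F'$ modulo $4$ matters.
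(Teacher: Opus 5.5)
Your proposal is correct and follows the paper's proof: both use $\alpha^2=0$ to get $F(x+\alpha)-F(x)=\alpha F'(x)$, which depends only on $F'(x)\bmod 4$ and hence on $\ol x$. Both then reduce collisions to $b\in\{a,a+\rho\}$ via $p'$ and to $A_F(a)=0$ for the pair. You also spell out the counting via $\rho$-periodicity of $A_F$, which the paper leaves implicit.
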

\begin{proof}
$\alpha^2=0$, so $F(x+\alpha)-F(x)=\alpha F'(x)$, which depends only on $F'(x)\bmod4$ and hence only on $\ol x$.
Two residues give the same value if and only if $F'(x_a)\equiv F'(x_b)\pmod4$. This forces $b\in\{a,a+\rho\}$,
and for $b=a+\rho$ it holds if and only if $A_F(a)=0$.
\end{proof}

\begin{corollary}\label{cor:M5}
(i) If $\mu\ne0$, middle-layer optimality at $k=3$ ($A\equiv0$) and exceptional optimality at $k\ge4$ ($A$
zero-free) are incompatible. A lift with $A\equiv0$ has $\Dexc\ge q^2$ and a penultimate layer whose nonzero entries
all equal $2q^{k-1}$, at every $k\ge4$.
(ii) If $\mu=0$, a zero-free $A$ gives the middle layer at $k=3$, exceptional optimality at all $k$ and an
optimally flat penultimate layer at all $k\ge4$.
(iii) If $\mu\ne0$ and $Z=\emptyset$, the optimal exceptional value itself rises from $q$ at $k=3$ to $2q$ at
$k\ge4$. This change of value is a separate phenomenon from the incompatibility in (i).
\end{corollary}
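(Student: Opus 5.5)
The three parts of Corollary~\ref{cor:M5} follow by combining Theorem~\ref{thm:O3}, Theorem~\ref{thm:M4}, Theorem~\ref{thm:M1}(iii) and the minimizer criteria. No new local analysis should be needed. The one point to keep in view is that $A_F$ is the same function at every level, by Lemma~\ref{lem:CL0}, so conditions on $A$ at different levels can be compared directly.

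For (i), take $\mu\ne0$. Theorem~\ref{thm:O3} says middle-layer optimality at $k=3$ holds exactly when $A_F\equiv0$. Theorem~\ref{thm:minimizers} for $k\ge4$ says exceptional optimality requires $A_F$ to have no zero. Since $q\ge32$, the field has at least one point, and a function that is identically zero has a zero, so the two conditions are incompatible. For a lift with $A\equiv0$, Theorem~\ref{thm:M1}(iii) gives $\Dexc\ge q^2$ at every $k\ge4$. Theorem~\ref{thm:M4} with $s=q$ gives $q-s=0$ entries equal to $q^{k-1}$ and $q/2$ entries equal to $2q^{k-1}$, so every nonzero penultimate entry is $2q^{k-1}$.

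For (ii), take $\mu=0$ and $A$ zero-free. Theorem~\ref{thm:O3} gives the optimal middle layer at $k=3$, because for $\mu=0$ the criterion there is exactly that $A$ has no zero. Theorem~\ref{thm:minimizers} for $k\ge4$, together with its $k=3$, $\mu=0$ clause, gives exceptional optimality at all levels. This is also Theorem~\ref{thm:CL}. Theorem~\ref{thm:M4} with $s=0$ gives the optimally flat penultimate layer. Existence of such a lift comes from Proposition~\ref{prop:Acontrol}, by taking $A\equiv1$.

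For (iii), read off the row $\mu\ne0$, $Z=\emptyset$ of Theorem~\ref{thm:minimax}: the optimal value is $q$ at $k=3$ and $2q$ at $k\ge4$. The statement that this is a separate phenomenon is a remark rather than a claim to prove. To justify it, I would observe that the rise in the optimum is caused by Theorem~\ref{thm:M1}(iv), which involves the $A$-zero-free minimizers. The incompatibility in (i), by contrast, concerns the middle layer and holds whether or not $Z$ is empty.

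I expect the only delicate step to be making sure the ``$A\equiv0$'' lift in (i) really lies at level $k\ge4$ with the same $A$. Lemma~\ref{lem:CL0} and Proposition~\ref{prop:Acontrol} handle this.
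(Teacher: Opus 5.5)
Your proposal is correct and follows the same route as the paper, which gives no separate proof: it reads the corollary off Theorems~\ref{thm:O3} and~\ref{thm:M4}, together with Theorem~\ref{thm:M1}(iii), the minimizer criteria of Theorem~\ref{thm:minimizers} and the minimax table, with Lemma~\ref{lem:CL0} ensuring that $A_F$ is the same function at every level. Your justification of the remark in (iii) is a reasonable reading of a point the paper leaves unargued: the incompatibility in (i) holds for every $Z$, whereas the rise from $q$ to $2q$ occurs only when $Z=\emptyset$.
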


This proves Theorem~\ref{thm:layers}.

\paragraph{Intermediate layers.}
For $1\le j\le k-3$ the layers $v_j$ lie between the middle layer of $k=3$ and the penultimate layer. Two further
invariants of $p$ enter: $c_3=[X^3]p$ and the affine map $\pi=p^{[2]}$ of Lemma~\ref{lem:F0}(i). For $\alpha=2^ju$
with $u$ a unit put
\begin{gather*}
w_\alpha=\pi+[j=1]\,\ol u\,c_3,\qquad B_\alpha=A_F+[j=1]\,\ol u\,\mu,\\
s_\alpha=\#B_\alpha^{-1}(0),\qquad d=k-j-2 .
\end{gather*}

\begin{theorem}[Intermediate layers]\label{thm:inter}
Let $k\ge4$, $1\le j\le k-3$ and $\alpha=2^ju$ with $u$ a unit.
\begin{enumerate}[label=(\roman*)]
\item Every nonzero entry of the row of $\alpha$ is a multiple of $q^{j+1}$.
\item If $c_3=0$ and $\pi$ has no zero, the row has $(q-s_\alpha)q^{d}$ entries $q^{j+1}$, $\frac{s_\alpha}2q^{d}$ entries
$2q^{j+1}$, and no other nonzero entries.
\item All nonzero entries of all rows of $v_j$ equal $q^{j+1}$ if and only if $c_3=0$, $\pi$ has no zero, and $A_F$ has
no zero (when $j\ge2$ or $\mu=0$), respectively $A_F\equiv0$ (when $j=1$ and $\mu\ne0$).
\end{enumerate}
\end{theorem}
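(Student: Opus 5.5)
The plan is to study $G(x)=F(x+\alpha)-F(x)$ in three steps: show that it is constant on large cosets, reduce it to a map $H$ modulo $2^{k-j}$, and then run the gate and Hensel arguments of Lemmas~\ref{lem:gate} and~\ref{lem:local} with the roles of $A_F$ and $\mu$ taken over by $w_\alpha$ and $B_\alpha$.

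\emph{Step 1: part (i).} We write $G(x)=\sum_{i\ge1}\alpha^iF^{[i]}(x)$. Since $2F^{[2]}=F''$, the derivative satisfies $G'(x)=F'(x+\alpha)-F'(x)\in2^{j+1}R_k$, and $G^{[2]}(x)=F^{[2]}(x+\alpha)-F^{[2]}(x)\in2^jR_k$. Take $h\in2^{k-j-1}R_k$. Then $hG'(x)\in2^kR_k=0$ and $h^2G^{[2]}(x)\in2^{2k-j-2}R_k=0$ because $j\le k-2$. The higher terms vanish as well, using $h^i\in 2^{i(k-j-1)}R_k$ and $G^{[i]}\in2^jR_k$. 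Hence $G$ is constant on the cosets $x+2^{k-j-1}R_k$. These cosets have $q^{j+1}$ elements, so every nonzero entry is a multiple of $q^{j+1}$.

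\emph{Step 2: reduction and the two local invariants.} Since $G\in2^jR_k$, only $\beta\in2^jR_k$ occur. We put $H(x)=uF'(x)+2^ju^2F^{[2]}(x)+2^{2j}u^3F^{[3]}(x)+\cdots$, so that $G=2^jH$, and $G(x)=\beta$ is equivalent to $H(x)\equiv\beta/2^j\pmod{2^{k-j}}$.

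The residue of $H$ is $\bar u\,p'(a)$. By (H), solutions therefore lie only in a pair $\{a,a+\rho\}$.

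Along a class, $H'(x)=uF''+2^ju^2(F^{[2]})'+\cdots$ lies in $2R_k$. The residue of $H'/2$ is $\bar u\,\pi(a)+[j=1]\bar u^2c_3$, because $(p^{[2]})'=c_3$ for the affine $\pi$. Up to the unit factor $\bar u$, this residue is $w_\alpha(a)$.

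Across the pair, with $\bar\tau=\rho$, we get $H(x+\tau)-H(x)\equiv2\bigl(\bar uA_F(a)+[j=1]\bar u^2\mu\bigr)\pmod4$ by the definitions of $A_F$ and $\mu$ (Lemma~\ref{lem:F0}(i)). This is $2\bar u\,B_\alpha(a)$.

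Two facts follow. First, $H(x+2z)\equiv H(x)\pmod 4$, so the image of one class lies in a single coset of $4R$ modulo $2^{k-j}$, which has $q^{k-j-2}=q^d$ elements. Second, the two classes of a pair land in the same coset exactly when $B_\alpha(a)=0$.

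\emph{Step 3: part (ii) by Hensel.} If $c_3=0$ and $\pi$ has no zero, then $w_\alpha$ has no zero. As in Theorem~\ref{thm:S3}, the equation for the next digit is then linear with nonzero coefficient in $\bar z$, and it lifts uniquely. So each class maps onto its whole coset of $4R$, and each value has exactly $q^{k-1}/q^d=q^{j+1}$ preimages in the class.

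Counting per pair gives the spectrum. A pair with $B_\alpha\neq0$ gives $2q^d$ values with entry $q^{j+1}$. A pair with $B_\alpha=0$ gives $q^d$ values with entry $2q^{j+1}$. Summing over pairs gives $(q-s_\alpha)q^d$ entries $q^{j+1}$ and $\tfrac{s_\alpha}{2}q^d$ entries $2q^{j+1}$, and the total check is $q^k$.

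\emph{Step 4: part (iii), the main obstacle.} Sufficiency comes from (ii) once we check when $s_\alpha=0$ for all units $u$.
\begin{itemize}[label=--]
\item For $j\ge2$ we have $B_\alpha=A_F$, so the condition is that $A_F$ has no zero.
\item For $j=1$ we have $B_\alpha=A_F+\bar u\mu$. If $\mu=0$ this is again $A_F$. If $\mu\ne0$, then $A_F(a)\ne0$ would vanish at $\bar u=A_F(a)/\mu$, so the condition is $A_F\equiv0$.
\end{itemize}
This matches the case split of Theorem~\ref{thm:O3}.

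For necessity we must show two things. If $B_\alpha(a)=0$ for some $u$, some entry is at least $2q^{j+1}$. If $w_\alpha(a)=0$ for some $a$ and $u$, some entry exceeds $q^{j+1}$, since the next-digit equation then degenerates and whole additional cosets collapse. The second point is the delicate one, and I would attempt it first. When $w_\alpha(a)=0$, I would show that $H$ is constant modulo $8$ on $x_a+2R$ digitwise, or at least that the Hensel step fails to be injective, giving an entry of at least $q^{j+2}$. The argument would mimic Theorem~\ref{thm:M1}(iii) with $h\in2^{k-j-2}R_k$.

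It also remains to check that "$c_3=0$ and $\pi$ zero-free" is exactly the condition that $w_\alpha$ is zero-free for all units $u$. For $j\ge2$ this is immediate. For $j=1$ I expect it to follow from $\pi(a)=\bar u c_3$ being solvable for some unit $\bar u$ whenever $c_3\ne0$. This solvability needs a short argument about the image of the affine map $\pi$, and I expect that argument to be the main remaining gap.
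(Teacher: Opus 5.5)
\textbf{There is a genuine gap: the necessity of $c_3=0$ in (iii).} Your parts (i), (ii) and the sufficiency half of (iii) are correct and follow the paper. Your proof of (i), via constancy of $G$ on the cosets of $2^{k-j-1}R_k$, is the same count as the paper's $q^{j+1}N_a(g)$.

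\textbf{Where your plan breaks.} You detect non-flatness only through zeros of $B_\alpha$ or $w_\alpha$. You then rely on ``$c_3=0$ and $\pi$ zero-free'' being equivalent to ``$w_\alpha$ zero-free for every unit $u$''.

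For $j\ge2$ this equivalence is not immediate. There $w_\alpha=\pi$ does not involve $c_3$, so the equivalence would need ``$\pi$ zero-free $\Rightarrow c_3=0$'', which (H) does not give you. A row with $c_3\ne0$ and $\pi$ zero-free is therefore covered neither by (ii) nor by your necessity argument.

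For $j=1$ the equivalence does hold, and the step you call the main gap is easy. By Lemma~\ref{lem:F0}(i), $\pi$ is affine with $X$-coefficient $c_3$, hence non-constant, so $\ol u=\pi(a)/c_3$ at any $a$ with $\pi(a)\ne0$ gives $w_\alpha(a)=0$. But your next claim then fails. If $w_\alpha(a)=0\ne c_3$ and $d=1$ (so $k=4$), the next-digit map is, up to the unit $\ol u$, $y\mapsto c_3y^2$. This is a bijection of $\F_q$, so the class of $a$ contributes only entries $q^{2}$. Your Theorem~\ref{thm:M1}(iii)-type argument with $h\in2^{k-j-2}R_k$ needs $h^2G^{[2]}(x)=0$. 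That holds when $c_3=0$ (then $G^{[2]}(x)\in2^{j+1}R_k$) or when $d\ge2$, but not in this case.

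\textbf{The missing idea} is to look at a residue where $w_\alpha$ does \emph{not} vanish.
\begin{enumerate}
\item If $c_3\ne0$, then for every unit $u$ the affine map $w_\alpha$ has nonzero linear part, so $w_\alpha(a)\ne0$ for some $a$.
\item At that $a$, the next-digit map $y\mapsto w_\alpha(a)y+c_3y^2$ (up to $\ol u$) is $\F_2$-linear with kernel $\{0,w_\alpha(a)/c_3\}$, hence two-to-one.
\item Its roots are simple, so each lifts uniquely by Hensel's lemma. Every value in the image is therefore hit exactly twice, and the row has an entry at least $2q^{j+1}$.
\end{enumerate}
This works for every $j$, every $d\ge1$ and every $u$, and it is how the paper argues.

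Once this case is removed, the rest of your plan goes through. For $c_3=0$ and $\pi(a)=0$, your coset argument (or the paper's pigeonhole on $\Phi_{x_a}\equiv0\pmod 2$) gives an entry at least $q^{j+2}$. Otherwise (ii) applies, and flatness means $s_\alpha=0$ for all $u$.
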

\begin{proof}
Put $n=k-j$. Then $F(x+\alpha)-F(x)=\alpha T(x)$ with $T=\sum_{i\ge1}\alpha^{i-1}F^{[i]}$, and $\alpha y=\alpha y'$ if and
only if $y\equiv y'\pmod{2^n}$. Hence $\delta_F(\alpha,\beta)=0$ unless $\beta=\alpha\gamma$, and then
$\delta_F(\alpha,\alpha\gamma)=q^j\,\#\{y\in R_k/2^nR_k:T(y)\equiv\gamma\}$. Since $\ol T=p'$ is two-to-one on the pairs
$\{a,a+\rho\}$, solutions lie in one pair. The derivative $T'=2F^{[2]}+3\alpha F^{[3]}+\cdots$ has coefficients in
$2R_k$; with $W=T'/2$ we get $T(x+2z)=T(x)+4\Phi_x(z)$, where $\Phi_x(z)=W(x)z+T^{[2]}(x)z^2+\sum_{r\ge3}2^{r-2}T^{[r]}(x)z^r$
has residue $w_\alpha(a)y+c_3y^2$ for $\ol x=a$ (the residue of $W$ is $\pi+[j=1]\ol uc_3$ and that of $T^{[2]}$ is
$p^{[3]}=c_3$; the only exponent $2^a+2^b$ with $\binom e3$ odd is $3$). On the class of $a$, $T(x_a+2z)=\gamma$ means
$\gamma\equiv T(x_a)\pmod4$ and $\Phi_{x_a}(z)\equiv g\pmod{2^{d}}$ with $g=(\gamma-T(x_a))/4$; the second condition
depends on $z$ modulo $2^d$, and each such $z$ has $q$ lifts modulo $2^{n-1}$. So the class contributes
$q^{j+1}N_a(g)$ with $N_a(g)=\#\{z\bmod2^d:\Phi_{x_a}(z)\equiv g\}$, which proves (i).
Moreover $T(x+\tau)-T(x)\equiv2[A_F(a)]+[j=1]\,2[\ol u\mu]=2[B_\alpha(a)]\pmod4$, so the values of $\gamma$
reached from the two classes of a pair lie in the same residue class modulo $4$ if $B_\alpha(a)=0$ and in different
residue classes modulo $4$ otherwise; in the second case the two classes reach disjoint sets of values.

(ii) Here $w_\alpha=\pi$ has no zero and $c_3=0$, so $\Phi_{x_a}$ has unit derivative and every $g$ has exactly one
solution by Hensel's lemma: $N_a\equiv1$ on all $q^d$ values of $g$. So each class of a pair reaches every $\gamma$ of
its residue class modulo $4$, each with the entry $q^{j+1}$. A pair with $B_\alpha\ne0$ therefore gives $2q^d$
entries $q^{j+1}$. For a pair with $B_\alpha=0$ both classes reach every $\gamma$ of the same residue class modulo $4$, so the
pair gives $q^d$ entries $2q^{j+1}$.

(iii) If $c_3\ne0$, then $w_\alpha$ is affine with nonzero linear part, so $w_\alpha(a)\ne0$ for some $a$, and
$\#\{y:w_\alpha(a)y+c_3y^2=\ol g\}\in\{0,2\}$ gives entries $2q^{j+1}$ or more. If $c_3=0$ and $\pi(a)=0$, then
$\Phi_{x_a}\equiv0\pmod2$, so its $q^d$ values fall into at most $q^{d-1}$ classes and some $N_a(g)\ge q$. Otherwise (ii)
applies, and the row is flat if and only if $s_\alpha=0$. For $j\ge2$ or $\mu=0$, $B_\alpha=A_F$. For $j=1$ and
$\mu\ne0$, $B_\alpha$ has no zero for every unit $\ol u$ if and only if $A_F(a)/\mu\notin\F_q^*$ for all $a$, that is
$A_F\equiv0$.
\end{proof}

\begin{corollary}\label{cor:interfamily}
Let $r\ge5$ in Theorem~\ref{thm:oneterm}. For every $k\ge3$ and every $1\le j\le k-2$, every row of $v_j$ of $F^\sharp$
has all nonzero entries equal to $q^{j+1}$, the least possible value. The natural lift has, in every row of $v_j$,
$(q-2)q^{k-j-2}$ entries $q^{j+1}$ and $q^{k-j-2}$ entries $2q^{j+1}$.
\end{corollary}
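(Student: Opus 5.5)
The corollary has three parts: the $k=3$ middle layer, the penultimate layer $j=k-2$, and the intermediate layers $1\le j\le k-3$ for $k\ge4$. In each part I check the hypotheses of the relevant layer theorem using the invariants of $P_{m,r}$ computed in Section~\ref{sec:family}. For $r\ge5$, Proposition~\ref{prop:I123} gives $\rho=1$, $\mu=0$ and $Z=\F_q$. Proposition~\ref{prop:I45} gives $A_{F^\sharp}=L+L^2+1$, which has no zero. It also gives $A_F=L+L^2$, and the proof of Corollary~\ref{cor:natural} shows this vanishes exactly on $\{0,1\}$, so $s=2$ for the natural lift and $s=0$ for $F^\sharp$.

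The case $j=k-2$ is Theorem~\ref{thm:M4} for $k\ge4$, or Theorem~\ref{thm:O3} for $k=3$. For $k=3$ and $\mu=0$, the theorem says $F^\sharp$ has all entries $q^2=q^{j+1}$. For the natural lift, the pair $\{0,1\}$ merges: the two classes give the same value for every $u$, since $\ol uA_F(0)=0$. Every other class gives its own value. The row therefore has $q-2$ entries $q^2$ and one entry $2q^2$, which matches $(q-2)q^{0}$ and $q^0$. For $k\ge4$, Theorem~\ref{thm:M4} gives $q-s$ entries $q^{k-1}$ and $s/2$ entries $2q^{k-1}$, and this is again the claim.

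For $1\le j\le k-3$ I apply Theorem~\ref{thm:inter}(ii). This needs $c_3=0$ and a zero-free $\pi=p^{[2]}$. The reduced form of $p$ for $r\ge5$ has no monomial $X^3$, because $w=2^{r-2}\ge8$ and $T$ contains only powers of $2$. So $c_3=0$, and then $w_\alpha=\pi$ and $B_\alpha=A_F$, since $\mu=0$. Proposition~\ref{prop:I123}(ii) gives $\pi=1+X^w+X^u$, which is $p'$ up to the constant term. It has no zero because $1\notin K(\F_q)$ (Proposition~\ref{prop:I123}(i)): a zero would give $K(x)=1$ with $K=X^w+X^u$ additive. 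With $d=k-j-2$, Theorem~\ref{thm:inter}(ii) gives $(q-s)q^{d}$ entries $q^{j+1}$ and $\tfrac s2q^d$ entries $2q^{j+1}$. Substituting $s=0$ and $s=2$ gives both claims.

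Minimality follows from Theorem~\ref{thm:inter}(i): every nonzero entry is a multiple of $q^{j+1}$. At $j=k-2$, all values are constant on classes mod $2$, so every nonzero entry is at least $q^{k-1}$. The main point to check is that the hypotheses of Theorem~\ref{thm:inter} hold uniformly: $c_3=0$ and $\pi$ zero-free. This is where $r\ge5$ is used, since for $r=3$ the form contains $X^3$.
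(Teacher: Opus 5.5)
Your proposal is correct and follows the paper's own proof. You use the same three-way split: Theorem~\ref{thm:O3} for $k=3$, Theorem~\ref{thm:M4} for $j=k-2$, and Theorem~\ref{thm:inter}(ii) for $j\le k-3$, with $c_3=0$, $\pi$ zero-free, $\mu=0$, $s=0$ for $F^\sharp$ and $s=2$ for the natural lift, and minimality from Theorem~\ref{thm:inter}(i) together with constancy on residue classes. One small slip: for $r\ge5$, $\pi=1+X^w+X^u$ is exactly $p'$, not $p'$ ``up to the constant term''; your argument via $1\notin K(\F_q)$ is unaffected.
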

\begin{proof}
For $r\ge5$, $p$ has no monomial $X^3$, so $c_3=0$, and $\pi=1+X^w+X^u$ (Proposition~\ref{prop:I123}(ii)) has no zero
because $\Tr\pi(a)=\Tr(1)=1$. Also $\mu=0$, $A_{F^\sharp}$ has no zero and $A_F^{-1}(0)=\{0,1\}$
(Corollary~\ref{cor:natural}). Apply Theorem~\ref{thm:inter} for $j\le k-3$, Theorem~\ref{thm:M4} for $j=k-2$
and Theorem~\ref{thm:O3} for $k=3$, $j=1$. Minimality is Theorem~\ref{thm:inter}(i) for $j\le k-3$, Theorem~\ref{thm:M4} for $j=k-2$,
and Theorem~\ref{thm:O3} for $k=3$, where the difference is constant on residue classes of $q^2$ points.
\end{proof}

Thus for $r\ge5$ the one-term lift is optimal on every layer except the top layer, whose value $2q^{k-1}$ is forced
for every lift~\cite{RS}. When $c_3\ne0$ or $\pi$ has a zero, the class counts $N_a$ are given by a Hensel tree whose
shape depends on whether $w_\alpha(a)=0$ and whether $c_3=0$. A residue with $w_\alpha(a)=0\ne c_3$ produces a
universal multiset of values, from which every row of $v_j$ meeting such a residue has an entry at least
$q^{\,j+1+\lfloor (k-j-2)/2\rfloor}$. For $r=3$ ($c_3=1$, $\pi(1)=0$, and $X+X^u+X^v$ is bijective because $3\nmid m$) this happens for every lift.

The finer multisets in these cases are governed by one elementary recursion. For $\Phi\in R_k[z]$ and $D\ge0$ let
$N^D_\Phi(g)=\#\{z\in R_k/2^DR_k:\Phi(z)\equiv g\pmod{2^D}\}$; the class contribution in the proof of
Theorem~\ref{thm:inter} is $q^{j+1}N^d_{\Phi_{x_a}}$.

\begin{proposition}[Digit recursion]\label{prop:GR}
Let $\Phi\in R_k[z]$, $2\le D\le k$ and $\hat y\in R_k$ a lift of $y\in\F_q$. If $\Phi'(\hat y)$ is a unit, then for every
$g$ with $\ol g=\ol\Phi(y)$ exactly one $z\equiv\hat y\pmod2$, $z\bmod 2^D$, satisfies $\Phi(z)\equiv g$. If
$\Phi'(\hat y)\in2R_k$, then $\Phi_y(z)=(\Phi(\hat y+2z)-\Phi(\hat y))/4$ lies in $R_k[z]$, and
\[
\#\{z\equiv\hat y\ (2):\Phi(z)\equiv g\ (2^D)\}=q\,[\,g\equiv\Phi(\hat y)\ (4)\,]\;N^{D-2}_{\Phi_y}\bigl((g-\Phi(\hat y))/4\bigr),
\]
for any choice of the quotients by $4$; the count does not depend on the choice since $D-2\le k-2$.
\end{proposition}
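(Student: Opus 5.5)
The plan is to reduce both assertions to the Taylor expansion of $\Phi$ along the residue class of $\hat y$, namely
\[
\Phi(\hat y+2z)=\Phi(\hat y)+2z\,\Phi'(\hat y)+\sum_{i\ge2}2^iz^i\,\Phi^{[i]}(\hat y),
\]
which holds exactly in $R_k$. The elements $z\equiv\hat y\pmod 2$ are exactly those of the form $\hat y+2z_1$. Counting them modulo $2^D$ amounts to counting $z_1$ modulo $2^{D-1}$, since $2z_1$ depends only on $z_1\bmod 2^{D-1}$ in $R_k/2^D$.

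In the unit case I will argue by Hensel's lemma over $R_D=R_k/2^DR_k$. Reduce $\Phi$ to $R_D$ and assume $\ol g=\ol\Phi(y)$. Then $y$ is a simple residue root of $\Phi-g$, because $\ol{\Phi'(\hat y)}\ne0$. Hensel's lemma in the Galois ring $R_D$ then gives exactly one root modulo $2^D$ lying above $y$. I will either cite the standard form already used in Lemma~\ref{lem:ordinary} and Theorem~\ref{thm:S3}, or prove it directly: induct on the digit, noting that the linear term $2^j z\,\Phi'(\hat y)$ with unit coefficient is a bijection on the next digit.

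In the case $\Phi'(\hat y)\in2R_k$, I first check that $\Phi_y\in R_k[z]$. Write $\Phi'(\hat y)=2w$. Then
\[
\Phi(\hat y+2z)-\Phi(\hat y)=4\Bigl(wz+\sum_{i\ge2}2^{i-2}z^i\Phi^{[i]}(\hat y)\Bigr),
\]
so one may take $\Phi_y=wz+\sum_{i\ge 2}2^{i-2}\Phi^{[i]}(\hat y)z^i$. I will fix this representative and use it in all later statements.

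The congruence $\Phi(\hat y+2z)\equiv g\pmod{2^D}$ forces $g\equiv\Phi(\hat y)\pmod 4$, since $D\ge2$. This produces the indicator factor. When the indicator is $1$, write $g-\Phi(\hat y)=4h$. The condition becomes $4(\Phi_y(z)-h)\in2^DR_k$, which is equivalent to $\Phi_y(z)\equiv h\pmod{2^{D-2}}$. The equivalence uses that multiplication by $4$ induces an isomorphism $R_k/2^{k-2}\to 4R_k$, and that $D-2\le k-2$, so that $4x\in 2^DR_k$ if and only if $x\in2^{D-2}R_k$. This condition depends only on $z\bmod 2^{D-2}$, because $\Phi_y$ is a polynomial. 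Each class $z\bmod 2^{D-2}$ has exactly $q$ lifts modulo $2^{D-1}$, namely one for each choice of the digit at position $D-2$. Hence the count equals $q\,N^{D-2}_{\Phi_y}(h)$.

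The point I expect to need the most care is independence of the choice of quotient. Two choices of $h$ differ by an element $e$ with $4e=0$, so $e\in2^{k-2}R_k\subseteq2^{D-2}R_k$, and $N^{D-2}$ is unchanged. The same argument shows that a different quotient representative for $\Phi_y$ changes each coefficient by an element of $2^{k-2}R_k$ and does not affect the count. It also covers the edge case $D=2$, where $N^0\equiv1$.
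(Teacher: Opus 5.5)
Your proposal is correct and follows the paper's proof. The non-unit case is the same argument: the Taylor expansion has all coefficients in $4R_k$, the condition reduces to one on $\Phi_y$ modulo $2^{D-2}$, and the factor $q$ counts lifts modulo $2^{D-1}$. In the unit case the paper does not cite Hensel's lemma; it writes $\Phi(z)-\Phi(z')=(z-z')U$ with $U$ a unit, so $\Phi$ maps $\hat y+2R_k/2^D$ injectively into $\Phi(\hat y)+2R_k/2^D$, two sets of size $q^{D-1}$, which is equivalent to your Hensel or digit-induction step.
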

\begin{proof}
If $\Phi'(\hat y)$ is a unit, $\Phi(z)-\Phi(z')=(z-z')U$ with $U\equiv\Phi'(\hat y)\pmod2$ for $z\equiv z'\equiv\hat y$, so
$\Phi$ is injective from $\hat y+2R_k/2^D$ into $\Phi(\hat y)+2R_k/2^D$, two sets of $q^{D-1}$ elements. Otherwise
$\Phi(\hat y+2z)-\Phi(\hat y)=\sum_{r\ge1}2^r\Phi^{[r]}(\hat y)z^r$ has all coefficients in $4R_k$, and
$\Phi(\hat y+2z)\equiv g\pmod{2^D}$ holds if and only if $g\equiv\Phi(\hat y)\pmod4$ and $\Phi_y(z)\equiv(g-\Phi(\hat y))/4
\pmod{2^{D-2}}$, a condition on $z$ modulo $2^{D-2}$; each such $z$ has $q$ lifts modulo $2^{D-1}$.
\end{proof}

Iterating Proposition~\ref{prop:GR} over the residue classes determines every row of every intermediate layer of every
lift, after at most $\lceil d/2\rceil$ steps and with $O(q)$ evaluations of $T$ per critical point. The notes on the
intermediate layers in Online Resource~1 classify the resulting multisets. Every configuration of a single class or a merged
pair is given in closed form in terms of level data of $T$, except a few configurations of merged or deep classes, listed in those
notes, that need at least five independent conditions on the Taylor data of $F$; there the recursion
still gives the answer. In particular, for $j\le k-4$, a class with $c_3=0=\pi(a)$ for which the coefficients of $z$
and $z^2$ in $\Phi_{x_a}$ lie in $4R_k$ has an entry at least $q^{j+3}$.
All closed forms and the recursion agree with exhaustive computation for $m=5$ and $d\le5$ ($133$ configurations, $75$
full rows, and $691$ further rows from earlier computations).

\section{Cryptographic consequences}\label{sec:crypto}

Write $P_F(\alpha\to\beta)=\delta_F(\alpha,\beta)/q^k$. For $F^\sharp$ the maximum over unit $\alpha$ is
$2q/q^k=2/q^{k-1}$, and over ordinary $\alpha$ at most $2/q^k$. For a general $p$ and an optimal lift the
exceptional maximum is the minimax value divided by $q^k$.

\begin{proposition}[Valuation preservation]\label{prop:valuation}
If $p$ satisfies (H) and $F$ is a lift of $p$, then $v_2(F(x+\alpha)-F(x))=v_2(\alpha)$ for every $\alpha\ne0$ and every $x$.
\end{proposition}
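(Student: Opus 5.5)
Write $\alpha=2^s u$ with $u$ a unit and $0\le s\le k-1$. Since $R_k$ is a chain ring, $v_2(y)$ is the largest $t$ with $y\in 2^tR_k$, so the claim has two parts: $F(x+\alpha)-F(x)\in 2^sR_k$, and it is not in $2^{s+1}R_k$. We use the exact Hasse expansion
\[
F(x+\alpha)-F(x)=\sum_{i\ge1}\alpha^iF^{[i]}(x)=\alpha\,T(x),\qquad T(x)=\sum_{i\ge1}\alpha^{i-1}F^{[i]}(x),
\]
as in the proof of Theorem~\ref{thm:inter}. Divisibility by $2^s$ is then immediate because $\alpha\mid\alpha T(x)$. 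For the second part it suffices to show that $T(x)$ is a unit. Indeed, $\alpha T(x)=2^s\cdot(uT(x))$ with $uT(x)$ a unit, and for a unit $e$ the element $2^se$ is nonzero and lies outside $2^{s+1}R_k$ whenever $s\le k-1$.

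If $s\ge1$, every term of $T(x)$ with $i\ge2$ lies in $2R_k$, so $\ol{T(x)}=\ol{F'(x)}=p'(\ol x)$. This is nonzero because $p'$ has no zero under (H), as noted after the definition of (H). Hence $T(x)$ is a unit.

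If $s=0$, then $\alpha$ is a unit and $\ol{T(x)}=\ol\alpha^{-1}\bigl(p(a+\ol\alpha)+p(a)\bigr)$ with $a=\ol x$. We must show that $p(a+\ol\alpha)\ne p(a)$, which holds because $p$ is a permutation of $\F_q$ and $\ol\alpha\ne0$. Equivalently, the residue of $F(x+\alpha)-F(x)$ is nonzero, so it is a unit.

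The only delicate point is being careful that the "residue of $T$" computation is legitimate when $\alpha$ is a unit. There the higher terms do not vanish modulo $2$, so this case must be handled through the permutation property rather than through $p'$. Once the cases are split this way, nothing else is an obstacle. At the top layer $s=k-1$ the argument still works, since the value is $2^{k-1}e$ with $e$ a unit, which is nonzero.
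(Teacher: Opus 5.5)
Your proof is correct and follows the paper's own argument. The paper also handles a unit $\alpha$ through the permutation property of $p$. For $s\ge1$ it likewise factors $F(x+\alpha)-F(x)=\alpha\bigl(F'(x)+\alpha H(x,\alpha)\bigr)$, with the bracket a unit because $p'$ has no zero; your added check that $2^se\notin 2^{s+1}R_k$ for a unit $e$ and $s\le k-1$ just makes explicit a step the paper leaves implicit.
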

\begin{proof}
For a unit $\alpha$ the residue difference is nonzero since $p$ is a permutation. For $\alpha=2^su$ with
$s\ge1$, $F(x+\alpha)-F(x)=\alpha(F'(x)+\alpha H(x,\alpha))$, and the bracket is a unit since $p'$ has no zero.
\end{proof}

\paragraph{The probability space.}
Let the state be $x_0=X_0\in R_k^n$, uniform and independent of independent uniform round keys
$\kappa_1,\dots,\kappa_N\in R_k^n$, and $x_i=M\cdot S(x_{i-1})+\kappa_i$. Equivalently, take a fixed plaintext
and add an independent uniform whitening key $\kappa_0$. For a characteristic
$\Omega=(\Delta_0\to\dots\to\Delta_N)$ let $\Pr[\Omega]$ be the probability, over $(X_0,\kappa_1,\dots,\kappa_N)$,
that the pair $(x_0,x_0+\Delta_0)$ follows $\Omega$.

\begin{example}[A fixed plaintext breaks the product formula]\label{ex:fixed}
Take $\GR(8,5)$, $S=F^\sharp$ for $P_{5,3}$, $n=1$, $M=1$ and the fixed pair $(0,1)$. Then $S(1)-S(0)=1$, so the first
transition holds for every key. The second input $x_1=S(0)+\kappa_1$ is uniform, and the key-averaged
probability of $1\to1\to1$ is $\delta(1,1)/|R|=64/32768=1/512$. The product of transition probabilities is
$(1/512)^2$.
\end{example}

Proposition~\ref{prop:markov} and part (a) of Corollary~\ref{cor:crypto} are the Markov-cipher argument of Lai,
Massey and Murphy~\cite{LMM} and the branch-number argument of Daemen and Rijmen~\cite{DRwide,DR}, written for keys
added in $R_k$ and combined with the valuation preservation above, and Proposition~\ref{prop:hull} below is the
two-round bound of Hong et al.~\cite{HLLSCC} (see also~\cite{PSLL}) transferred to ring addition. What is new is the
per-S-box input: the sharp unit-direction value of Theorems~\ref{thm:oneterm} and~\ref{thm:minimax}, the layer values of
Section~\ref{sec:layers}, the transfer of the branch number of $\ol M$ to $R_k^n$, and, in
Proposition~\ref{prop:hullunit}, the observation that valuation preservation fixes the active words of a layer at both
ends of a two-round differential, so that the layer values of a lift enter the differential bound without a
combinatorial factor. Throughout, only the additivity of $M$ and the invertibility of $\ol M$ are used.

\begin{proposition}[Markov property]\label{prop:markov}
In the probability space above, $\Pr[\Omega]=\prod_{i=1}^N P_S(\Delta_{i-1}\to M^{-1}\Delta_i)$, where
\[
P_S(a\to b)=\prod_{j=1}^{n}\frac{\delta_F(a_j,b_j)}{q^k}=q^{-kn}\prod_{j=1}^{n}\delta_F(a_j,b_j)
\]
is the probability that a uniform $x\in R_k^n$ satisfies $S(x+a)-S(x)=b$.
\end{proposition}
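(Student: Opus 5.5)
We prove the Markov property by induction on $N$, conditioning on the state before the last round; everything rests on the fact that a uniform key added in $R_k^n$ makes the next round input uniform and independent of the history. First we dispose of the one-round statement. For fixed $a,b\in R_k^n$ and uniform $x\in R_k^n$, the coordinates $x_1,\dots,x_n$ are independent and uniform, and $S$ acts wordwise. The event $S(x+a)-S(x)=b$ is therefore the intersection of the independent events $F(x_j+a_j)-F(x_j)=b_j$, and these have probabilities $\delta_F(a_j,b_j)/q^k$. This gives the product formula for $P_S$.

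For the characteristic, write $x_i'=x_i+\Delta_i$ for the second trajectory under the same keys. The pair follows $\Omega$ if and only if $x_i'-x_i=\Delta_i$ for all $i$. Because $M$ is additive and invertible on $R_k^n$ (it is an additive automorphism), we have
\[
x_i'-x_i=M\bigl(S(x_{i-1}')-S(x_{i-1})\bigr).
\]
Given $x_{i-1}'-x_{i-1}=\Delta_{i-1}$, the condition at step $i$ is therefore $S(x_{i-1}+\Delta_{i-1})-S(x_{i-1})=M^{-1}\Delta_i$. Call this event $\mathcal E_i$. It is a function of $x_{i-1}$ alone, and $\Omega$ is the intersection of $\mathcal E_1,\dots,\mathcal E_N$.

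The key step is to show that $x_0,x_1,\dots,x_{N-1}$ are independent and uniform on $R_k^n$. Here $x_0=X_0$ is uniform by assumption. Let $\mathcal F_{i-1}=\sigma(X_0,\kappa_1,\dots,\kappa_{i-1})$. Then $x_i=M S(x_{i-1})+\kappa_i$, with $MS(x_{i-1})$ measurable with respect to $\mathcal F_{i-1}$ and $\kappa_i$ uniform and independent of $\mathcal F_{i-1}$. Translation by a fixed element of the additive group $R_k^n$ preserves the uniform distribution, so conditionally on $\mathcal F_{i-1}$ the variable $x_i$ is uniform. Consequently $x_i$ is uniform and independent of $\mathcal F_{i-1}$, and in particular of $x_0,\dots,x_{i-1}$. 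Iterating this gives joint independence and uniformity of $x_0,\dots,x_{N-1}$.

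Since $\mathcal E_i$ depends only on $x_{i-1}$, we obtain
\[
\Pr[\Omega]=\prod_{i=1}^N\Pr[\mathcal E_i]=\prod_{i=1}^N P_S\bigl(\Delta_{i-1}\to M^{-1}\Delta_i\bigr)
\]
from the one-round computation. Only the additivity of $M$, its bijectivity, and the fact that keys are added with the same group law in which differences are taken are used. The invertibility of $M$ on $R_k^n$ follows from the invertibility of $\ol M$: a matrix over a local ring is invertible exactly when its reduction is.

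The main obstacle is the independence bookkeeping. One has to make sure that the event for the second trajectory involves no extra randomness. It does not: both trajectories use the same keys, so the key cancels in the difference, and the event $\mathcal E_i$ is determined by $x_{i-1}$. Example~\ref{ex:fixed} shows why the uniform starting state (equivalently, a whitening key $\kappa_0$) is needed for the first factor.
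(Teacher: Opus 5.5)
Your proof is correct and follows essentially the paper's argument: the independent uniform key $\kappa_i$, added in $R_k^n$, makes the round input $x_i$ uniform and independent of the history. Hence the events $\mathcal E_i$, each a function of $x_{i-1}$ alone, multiply. The only difference is bookkeeping: you state this as joint independence of $x_0,\dots,x_{N-1}$, whereas the paper chains the conditional probabilities $\Pr[E_i\mid E_1\cap\dots\cap E_{i-1}]$, so your version never conditions on a possibly null event.
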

\begin{proof}
Let $E_i$ be the event $S(x_{i-1}+\Delta_{i-1})-S(x_{i-1})=M^{-1}\Delta_i$. The events $E_1,\dots,E_{i-1}$ are
determined by $(X_0,\kappa_1,\dots,\kappa_{i-2})$. The input $x_0=X_0$ is uniform, and for $i\ge2$,
$x_{i-1}=MS(x_{i-2})+\kappa_{i-1}$ with $\kappa_{i-1}$ uniform and independent of
$(X_0,\kappa_1,\dots,\kappa_{i-2})$. So $x_{i-1}$ is uniform conditionally on $E_1\cap\dots\cap E_{i-1}$. Hence
$\Pr[E_i\mid E_1\cap\dots\cap E_{i-1}]=P_S(\Delta_{i-1}\to M^{-1}\Delta_i)$.
\end{proof}

\begin{proof}[Proof of Corollary~\ref{cor:crypto}]
Let $\Pr[\Omega]>0$ and let $\ol\Delta_i$ and $u_i$ be the reductions modulo $2$ of $\Delta_i$ and of
$M^{-1}\Delta_i$. By Proposition~\ref{prop:valuation}, $\mathrm{supp}(u_i)=\mathrm{supp}(\ol\Delta_{i-1})$, and
$\ol\Delta_i=\ol Mu_i$. With $a_i=\mathrm{wt}(\ol\Delta_{i-1})$, the number of unit-active S-boxes in round $i$, we get
$a_i+a_{i+1}=\mathrm{wt}(u_i)+\mathrm{wt}(\ol Mu_i)\ge B$. Since $\ol\Delta_0\ne0$, all $u_i\ne0$, and
$\sum_ia_i\ge B\lfloor N/2\rfloor$. Each unit-active S-box contributes at most $2/q^{k-1}$, and the others at most $1$.
This proves (a). Part (b) is Proposition~\ref{prop:layerchar}, part (c) is Propositions~\ref{prop:hull}
and~\ref{prop:hullunit}, with $D_0=2q$ from Theorem~\ref{thm:oneterm} and $D_s$ from Proposition~\ref{prop:layerchar}, and part (d) is
Proposition~\ref{prop:fixedkey}.
\end{proof}

For example, with $m=7$, $k=3$ (a $21$-bit S-box), $n=4$, an MDS matrix $\ol M$ ($B=5$) and $N=8$, this bound
is $2^{-260}$ for $F^\sharp$ of $P_{7,5}$. For the natural lift of $P_{7,5}$ the worst exceptional entry at
$k=3$ is exactly $q^2$ (Corollary~\ref{cor:natural}; all $16384$ exceptional rows were also computed,
Appendix~\ref{app:verification}), so the same argument gives only $2^{-140}$, and no better bound per unit-active S-box
is possible with this method.

\paragraph{Lower layers.}
Every nonzero difference $\Delta\in R_k^n$ has a layer: the largest $s$ with $\Delta\in(2^sR_k)^n$. Then $s\le k-1$
and some word of $\Delta$ lies outside $2^{s+1}R_k$. For $s<k$ the digit $\langle2^sy\rangle_s=\ol y$ of an element of
$2^sR_k$ is well defined, since $2^s(y-y')=0$ forces $y-y'\in2^{k-s}R_k\subseteq2R_k$. The branch-number argument works
in every layer.

\begin{proposition}[Characteristics in every layer]\label{prop:layerchar}
In the setting of Corollary~\ref{cor:crypto}, let $F$ be any lift of $p$ and $1\le s\le k-1$, let the input difference
$\Delta_0$ have layer $s$, and let $D_s$ be the largest entry of the rows $\delta_F(2^su,\cdot)$, $u$ a unit. Then
$\Pr[\Omega]\le(D_s/q^k)^{B\lfloor N/2\rfloor}$. For every lift $D_{k-1}=2q^{k-1}$, and for $F^\sharp$ with $r\ge5$,
$D_s=q^{s+1}$ for $1\le s\le k-2$.
\end{proposition}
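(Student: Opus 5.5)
The plan is to copy the proof of Corollary~\ref{cor:crypto}(a), replacing unit-activity by activity in layer $s$. First, by Proposition~\ref{prop:markov}, $\Pr[\Omega]=\prod_{i=1}^N P_S(\Delta_{i-1}\to M^{-1}\Delta_i)$, with each factor a product of word entries $\delta_F(a_j,b_j)/q^k$. Assume $\Pr[\Omega]>0$. I then need three facts: the layer of $\Delta_i$ is $s$ for every $i$; the reduced support pattern obeys the branch-number inequality; and every word active in layer $s$ contributes at most $D_s/q^k$.

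For the first fact, Proposition~\ref{prop:valuation} shows that $S$ maps each word difference to one of the same $2$-adic valuation. So $M^{-1}\Delta_i$ lies in $(2^sR_k)^n$ and has the same set of words outside $2^{s+1}R_k$ as $\Delta_{i-1}$. Since $M$ is additive, it commutes with multiplication by $2^s$ up to the structure $R_k$-linear? Only additivity is given, and additivity gives $\Z$-linearity, so $M(2^sy)=2^sM(y)$. Hence $M$ maps $(2^sR_k)^n$ onto itself, because it is an automorphism and $M^{-1}$ is also additive. Using the digit map $2^sy\mapsto\ol y$, which is well defined as noted before the statement, $M$ induces on layer-$s$ digits a map $\ol y\mapsto\ol{M(y)}$. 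I expect this induced map to be the reduction $\ol M$ applied to $\ol y$. The expected reason is that $M(y)\bmod 2$ depends only on $y\bmod 2$ and is given by $\ol M$; this is how $\ol M$ is defined as a map on $R_k/2R_k$. This identification of the digit action with $\ol M$ is the one point to check carefully. Once it holds, let $u_i$ be the digit vector of $M^{-1}\Delta_i$, where the digit is zero on words lying in $2^{s+1}R_k$. Then $\mathrm{supp}(u_i)$ equals the support of the digit vector of $\Delta_{i-1}$, and the digit vector of $\Delta_i$ is $\ol M u_i$. Invertibility of $\ol M$ keeps every $u_i\neq0$, since $\Delta_0$ has a word outside $2^{s+1}R_k$ by the definition of layer.

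The counting then goes as before. With $a_i$ the number of words of $\Delta_{i-1}$ in layer $s$ exactly, we have $a_i+a_{i+1}\ge B$, so $\sum a_i\ge B\lfloor N/2\rfloor$ after pairing consecutive rounds. Each word exactly in layer $s$ has the form $2^su$ with $u$ a unit and contributes at most $D_s/q^k$. Every other factor is at most $1$. This proves the bound.

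For the stated values of $D_s$, the case $D_{k-1}=2q^{k-1}$ is the Rønjom--Sandrib bound: by Theorem~\ref{thm:inter}'s argument with $\alpha^2=0$, $F(x+\alpha)-F(x)=\alpha F'(x)$, and this depends only on $\ol x$ through $p'(\ol x)$, which is two-to-one by (H). So every nonzero entry is exactly $2q^{k-1}$. For $F^\sharp$ with $r\ge5$ and $1\le s\le k-2$, Corollary~\ref{cor:interfamily} gives that all nonzero entries of rows in $v_s$ equal $q^{s+1}$; for $k=3$, $s=1$ it invokes Theorem~\ref{thm:O3}. Hence $D_s=q^{s+1}$.
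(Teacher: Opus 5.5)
Your proposal is correct and follows the paper's proof essentially step for step. The steps are the same: the Markov product; invariance of the layer $s$ via Proposition~\ref{prop:valuation} together with $M(2^sy)=2^sM(y)$, so that the digit vector is carried by $\ol M$ (the point you flagged, which holds because additivity gives $M(y+2z)=M(y)+2M(z)$); the branch-number pairing $a_i+a_{i+1}\ge B$; the direct $\alpha^2=0$ computation for the top layer; and Corollary~\ref{cor:interfamily} for $D_s=q^{s+1}$. The only blemish is cosmetic: the $\alpha^2=0$ computation is the one in Theorem~\ref{thm:M4}, not Theorem~\ref{thm:inter}, which covers only $j\le k-3$.
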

\begin{proof}
Let $\Pr[\Omega]>0$. A polynomial maps a difference in $2^sR_k$ to a difference in $2^sR_k$, since $x-y$ divides
$F(x)-F(y)$, and $M$ maps $(2^sR_k)^n$ into itself because it is additive. So every $\Delta_i$ and every S-box output
difference lies in $(2^sR_k)^n$. By Proposition~\ref{prop:valuation} a word of an S-box input difference has nonzero
digit if and only if the corresponding output word has, and $\langle M(2^sy)\rangle_s=\ol M\,\ol y$ because
$M(2^sy)=2^sM(y)$. Since $\ol M$ is invertible, the digit vectors stay nonzero, so the layer $s$ is invariant along
$\Omega$. With $a_i$ the number of words of $\Delta_{i-1}$ with nonzero digit we get $a_i+a_{i+1}\ge B$ exactly as in
the proof of Corollary~\ref{cor:crypto}(a), and each such word contributes at most $D_s/q^k$. For every lift,
$F(x+2^{k-1}u)-F(x)=2^{k-1}uF'(x)$ depends on $\ol x$ only through $\ol u\,p'(\ol x)$, and $p'$ is two-to-one, so every
entry of a top-layer row is $0$ or $2q^{k-1}$. For $F^\sharp$ and $s\le k-2$ the value is
Corollary~\ref{cor:interfamily}.
\end{proof}

In the example above ($m=7$, $k=3$, $n=4$, $B=5$, $N=8$) the layer $s=1$ gives $2^{-140}$ for $F^\sharp$ but only
$2^{-120}$ for the natural lift, whose middle-layer rows contain the value $2q^2$ because $A_F$ vanishes at $0$ and $1$
(Theorem~\ref{thm:O3}). The top layer $s=2$ gives $2^{-120}$ for every lift. So, for $r\ge5$, the one-term lift improves
the bound in every layer below the top, and the weakest of these bounds is the top-layer one, which is the same for every
lift.

\paragraph{Differentials and fixed keys.}
A differential is the union of all characteristics with given input and output differences. For $x\in R_k^n$ let
$\mathrm{wt}(x)$ be the number of nonzero words.

\begin{proposition}[Two-round differentials]\label{prop:hull}
Let $M$ be an additive automorphism of $R_k^n$ whose reduction $\ol M$ has branch number $B$, let $S$ apply a
permutation $G$ of $R_k$ to each word, and let $\eta=\max\{\delta_G(\alpha,\beta)/q^k:\alpha\ne0\}$. For every $a\ne0$
and every $b$, the probability, over a uniform $x_0$ and independent uniform $\kappa_1,\kappa_2$, that the pair
$(x_0,x_0+a)$ has output difference $b$ after the rounds $x\mapsto M\cdot S(x)+\kappa_i$, $i=1,2$, is at most
$\eta^{B-1}$. The same bound holds after any number $N\ge2$ of rounds with independent uniform keys.
\end{proposition}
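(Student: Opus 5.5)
We follow the classical two-round argument of Hong et al., transferred to ring addition. Write the two rounds as $x_1=M S(x_0)+\kappa_1$ and $x_2=M S(x_1)+\kappa_2$. Since $\kappa_2$ is added after the last S-layer, the output difference of the pair is $M(S(x_1+\Delta_1)-S(x_1))$, where $\Delta_1=M(S(x_0+a)-S(x_0))$ is the difference entering round two. We decompose the event ``output difference $b$'' according to $c:=M^{-1}\Delta_1$, the S-layer output difference in round one. Put $b'=M^{-1}b$. As in Proposition~\ref{prop:markov}, $x_0$ is uniform, and $x_1$ is uniform and independent of $x_0$ (because of $\kappa_1$). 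Hence
\[
\Pr=\sum_{c}P_S(a\to c)\,P_S(Mc\to b').
\]

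Next we bound the sum by a maximum times a sum of probabilities. Both factors are products over words, and for $\alpha\neq0$ each word factor $\delta_G(\alpha,\beta)/q^k$ is at most $\eta$. For $\alpha=0$ the word factor is $[\beta=0]$, because $G$ is a permutation. Let $w_1=\mathrm{wt}(a)$ and $w_2=\mathrm{wt}(Mc)$. If a term is nonzero, then $\mathrm{wt}(c)=w_1$, since $G$ is a permutation and so the S-layer preserves zero and nonzero words. We must get a branch inequality over $R_k$ rather than over $\F_q$. Here we use the digit argument of Proposition~\ref{prop:layerchar}. Let $s$ be the layer of $c$, so $c\in(2^sR_k)^n$. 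Then $\langle Mc\rangle_s=\ol M\langle c\rangle_s$, and a word of $c$ or of $Mc$ with nonzero digit is nonzero. Applying the branch number of $\ol M$ to the digit vector gives only $\mathrm{wt}_s(c)+\mathrm{wt}_s(Mc)\ge B$ on the digit supports. The words of $c$ with zero digit but nonzero value only add to the counts. Therefore $\mathrm{wt}(c)+\mathrm{wt}(Mc)\ge B$, that is $w_1+w_2\ge B$.

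Now we split into two cases. If $w_1\ge w_2$, we bound $P_S(Mc\to b')\le\eta^{w_2}$ and sum $P_S(a\to c)$ over $c$ to get $1$, which gives $\Pr\le\max\eta^{w_2}$. If $w_2>w_1$, we bound $P_S(a\to c)\le\eta^{w_1}$ and sum $P_S(Mc\to b')$ over $c$. Since $c\mapsto Mc$ is a bijection, this sum is $\sum_{d}P_S(d\to b')$, and it equals $1$ because $S$ is a permutation: $\sum_d\#\{x:S(x+d)-S(x)=b'\}=\#\{(x,y):S(y)-S(x)=b'\}=q^{kn}$. In the first case $w_2\ge\lceil B/2\rceil$, but this is not good enough. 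The fix, and the main obstacle, is to bound the larger of the two weights and sum over the smaller. More precisely, we bound the factor on the side with more active words, keeping the sum on the side with fewer. Then $\max(w_1,w_2)\ge\lceil B/2\rceil$ still only gives $\eta^{\lceil B/2\rceil}$, which is weaker than $\eta^{B-1}$.

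The route to $B-1$ is Hong et al.'s: bound all but one active word in each round by $\eta$, and use the sum over the single remaining word. We fix all words of $c$ except one active word $c_{j}$ of round one. Then sum over $c_j$ (which gives total mass $1$ in that word factor), and bound the remaining $w_1-1$ round-one factors and $w_2$ round-two factors by $\eta$. For this to work, the round-two output word factor must not grow when we sum over $c_j$. Concretely, we sum over $c_j$ of $\frac{\delta(a_j,c_j)}{q^k}\prod_i\frac{\delta((Mc)_i,b'_i)}{q^k}$ and bound the product by $\eta^{w_2}$ uniformly, but only after restricting to $c$ with fixed weight pattern. Counting the remaining free words costs the combinatorial factor. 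To avoid it, we instead sum over the $w_1-1$ other active words of $c$ via the round-one probabilities, each of which sums to $1$. The cleanest form is this: $\Pr\le\sum_c P_S(a\to c)\eta^{w_2(c)}$, together with the dual inequality. Combining them through $w_1+w_2\ge B$ and a Hölder/averaging step that saves one factor yields $\eta^{B-1}$; this combination step is where I expect the real work to be. Extending to $N\ge2$ rounds is then immediate: the last $N-2$ rounds are applied with independent keys to a uniform state, so conditioning on the state after round $2$ and summing the two-round bound over the intermediate difference keeps it $\le\eta^{B-1}$, because the later transition probabilities sum to $1$.
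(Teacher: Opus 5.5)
Your setup is sound. The decomposition $\Pr=\sum_c P_S(a\to c)\,P_S(Mc\to b')$ is right, and so is the inequality $\mathrm{wt}(c)+\mathrm{wt}(Mc)\ge B$ for every nonzero $c\in R_k^n$ via the digit of the layer, which is exactly the paper's first step. The passage to $N\ge2$ rounds also works: you apply the two-round bound to the first two rounds and use column sums equal to $1$ for the rest, while the paper conditions on the difference after $N-2$ rounds instead.

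The gap is the central step. You leave it open, and the H\"older/averaging combination you propose cannot close it. On contributing terms both weights are constant. You already have $\mathrm{wt}(c)=\mathrm{wt}(a)$, and also $\mathrm{wt}(Mc)=\mathrm{wt}(b')$, since $G$ is a bijection and so $(Mc)_j\ne0$ exactly when $b'_j\ne0$; the notation $w_2(c)$ hides this. Your two inequalities are therefore just $\Pr\le\eta^{w_2}$ and $\Pr\le\eta^{w_1}$. No combination of two scalar upper bounds beats their minimum $\eta^{\max(w_1,w_2)}$; for $B=5$ and $w_1=w_2=3$ this is $\eta^3$, not $\eta^4$. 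Bounding ``all but one active word in each round'' is not the right count either. The number of factors you may bound by $\eta$ is $B-1$ in total, fixed by the branch number and not by $w_1,w_2$.

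The missing idea is to apply the branch inequality to \emph{differences} of contributing vectors, not to a single $c$. Put $I=\mathrm{supp}(a)$ and $J=\mathrm{supp}(b')$. A nonzero term forces $|I|+|J|\ge B$, so you can choose $I'\subseteq I$ and $J'\subseteq J$ with $|I'|+|J'|=B-1$. Bound only the round-one factors indexed by $I'$ and the round-two factors indexed by $J'$ by $\eta$.

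Now suppose two contributing $c,\tilde c$ agree on $I\setminus I'$ and $Mc,M\tilde c$ agree on $J\setminus J'$. Then $e=c-\tilde c$ is supported in $I'$ and $Me$ in $J'$, so $\mathrm{wt}(e)+\mathrm{wt}(Me)\le B-1<B$, which forces $e=0$. Hence $c\mapsto\bigl(c|_{I\setminus I'},(Mc)|_{J\setminus J'}\bigr)$ is injective on the contributing $c$. The sum of the remaining factors is then at most the sum over all such tuples, which factorizes as
\[
\prod_{i\in I\setminus I'}\sum_v\frac{\delta_G(a_i,v)}{q^k}\cdot\prod_{j\in J\setminus J'}\sum_u\frac{\delta_G(u,b'_j)}{q^k}=1,
\]
because the rows and columns of the difference table of a permutation sum to $q^k$. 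This yields $\eta^{B-1}$, and it is the paper's argument.
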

\begin{proof}
First, $\mathrm{wt}(x)+\mathrm{wt}(Mx)\ge B$ for every nonzero $x\in R_k^n$: if $s$ is the layer of $x$ and $x=2^sy$,
then $\ol y\ne0$, $\mathrm{wt}(x)\ge\mathrm{wt}(\ol y)$ and $\mathrm{wt}(Mx)\ge\mathrm{wt}(\ol M\ol y)$, because
$2^sv\ne0$ for a unit $v$. Second, summing over the difference $x$ after the first S-box layer, the key $\kappa_1$
makes the second input uniform, so the probability is $\sum_xP_S(a\to x)\,P_S(Mx\to c)$ with $c=M^{-1}b$ and $P_S$
as in Proposition~\ref{prop:markov}. Since $G$ is a bijection, a term is nonzero only if $x$ and $Mx$ have the
supports $I$ of $a$ and $J$ of $c$. If some term is nonzero, then $|I|+|J|\ge B$; choose $I'\subseteq I$ and
$J'\subseteq J$ with $|I'|+|J'|=B-1$ and bound the factors indexed by $I'$ and $J'$ by $\eta$. Two contributing $x$
with the same coordinates on $I\setminus I'$, whose images under $M$ have the same coordinates on $J\setminus J'$,
differ by some $e$ with $\mathrm{wt}(e)+\mathrm{wt}(Me)\le B-1$, so $e=0$. The remaining sum is therefore at most
$\prod_{j\in I\setminus I'}\sum_v\delta_G(a_j,v)/q^k\cdot\prod_{j\in J\setminus J'}\sum_u\delta_G(u,c_j)/q^k=1$, since
the rows and the columns of the difference table of a permutation sum to $q^k$. For $N\ge2$ rounds condition on the
difference $c'$ after $N-2$ rounds. It is a function of $x_0$ and the first $N-3$ keys, so by the argument of
Proposition~\ref{prop:markov} the state after $N-2$ rounds is uniform and independent of $c'$ and of the last two keys.
Moreover $c'\ne0$ because every round is a bijection. So the probability is an average of two-round probabilities.
\end{proof}

For every lift $F$ of $p$ the value $\eta$ is $2/q$: the solutions counted by $\delta_F(\alpha,\beta)$, $\alpha\ne0$, lie in
at most two residue classes, and the top layer attains $2q^{k-1}$. With an MDS layer ($B=n+1$) every differential
therefore has probability at most $(2/q)^n$, for instance $2^{-24}$ for $m=7$, $n=4$. This bound does not see the
coefficient choice. Below the top layer it can be separated. For $0\le s\le k-1$ let $D_s$ be the largest entry of the
rows $\delta_F(2^su,\cdot)$, $u$ a unit; so $D_0$ is the largest entry on unit input differences and $D_s$ for $s\ge1$
is the value of Proposition~\ref{prop:layerchar}.

\begin{proposition}[Differentials in a layer]\label{prop:hullunit}
In Proposition~\ref{prop:hull} let $G=F$ be a lift of $p$ and let $a\ne0$ have layer $s$, $0\le s\le k-1$. Then for
every $b$ the probability is at most $(D_s/q^k)^{B-1}$, after two or more rounds.
\end{proposition}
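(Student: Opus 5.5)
We adapt the proof of Proposition~\ref{prop:hull}, replacing the global bound $\eta$ by the layer bound $D_s/q^k$, and use valuation preservation (Proposition~\ref{prop:valuation}) to locate the active words at both ends. As there, we first reduce $N\ge2$ rounds to two rounds by conditioning on the difference $c'$ after $N-2$ rounds. One point needs checking here: the layer of the difference is invariant along the rounds. A polynomial maps $(2^sR_k)^n$-differences into $(2^sR_k)^n$, and so does the additive map $M$. Valuation preservation and the invertibility of $\ol M$ keep the digit vector nonzero, exactly as in the proof of Proposition~\ref{prop:layerchar}. So $c'$ again has layer $s$, and the probability is an average of two-round probabilities with input in layer $s$.

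For two rounds we write the probability as $\sum_xP_S(a\to x)\,P_S(Mx\to c)$ with $c=M^{-1}b$. Let $a=2^sy$ with $\ol y\ne0$, and let $I=\mathrm{supp}(\ol y)$ be the set of words of $a$ of exact valuation $s$. By Proposition~\ref{prop:valuation}, a contributing $x$ has every word $x_j$ of valuation $v_2(a_j)$, so $x\in(2^sR_k)^n$ with digit vector $\ol x$ supported exactly on $I$. Put $x'=Mx$, the second-round input difference. By the same proposition, its words of valuation exactly $s$ are those where $c$ has valuation exactly $s$, so the support $J$ of its digit vector $\ol M\,\ol x$ is the set of words of $c$ of valuation $s$; this $J$ is fixed by $c$. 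If no term is nonzero there is nothing to prove. Otherwise $\mathrm{wt}(\ol x)+\mathrm{wt}(\ol M\ol x)\ge B$ gives $|I|+|J|\ge B$.

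Next we choose $I'\subseteq I$ and $J'\subseteq J$ with $|I'|+|J'|=B-1$. For $j\in I'$, the factor $\delta_F(a_j,x_j)/q^k$ has $a_j$ in layer $s$, so it is at most $D_s/q^k$. For $j\in J'$, the factor $\delta_F(x'_j,c_j)/q^k$ has $x'_j$ of valuation exactly $s$, so it is also at most $D_s/q^k$. For the remaining factors we use the injectivity argument of Proposition~\ref{prop:hull}. Two contributing $x$ that agree on $I\setminus I'$ and whose images agree on $J\setminus J'$ differ by some $e$ with $\mathrm{wt}(e)+\mathrm{wt}(Me)\le B-1$. Here $e=0$ by the branch property on $R_k^n$, whose proof there (via the layer of $e$) is already done. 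The leftover sum is then bounded by products of full row sums and column sums of the difference table of the permutation $F$, each equal to $q^k$, giving a factor of at most $1$. This yields $(D_s/q^k)^{B-1}$.

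The main obstacle is the bookkeeping for $J$: the factors on $J'$ must be indexed by words whose input difference $x'_j$ lies in layer $s$, and not merely by words where $c_j\ne0$. Words of $x'$ of valuation greater than $s$ must be put into the bounded-by-$1$ part. We handle this by defining $I$ and $J$ through exact-valuation-$s$ digits, as above, instead of through supports. For those extra words the injectivity step then needs the coordinates outside $I$ and $J$ to be summed freely, which the row- and column-sum bound permits. We check that the case $s=k-1$ recovers $(2/q)^{B-1}$, since $D_{k-1}=2q^{k-1}$.
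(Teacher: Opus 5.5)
Your strategy is the paper's: reduce $N\ge2$ rounds to two rounds by conditioning on the difference after $N-2$ rounds, whose layer stays $s$. Then use Proposition~\ref{prop:valuation} at both S-box layers to see that the words of exact valuation $s$ on each side are determined by $a$ and $c$, and choose $I'$, $J'$ inside these sets so that each chosen factor is at most $D_s/q^k$.

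\textbf{The step that fails is the injectivity step.} You fix the coordinates of $x$ only on $I\setminus I'$ and those of $Mx$ only on $J\setminus J'$, where $I$ and $J$ are the exact-valuation-$s$ sets. You then claim that two contributing $x$ differ by an $e$ with $\mathrm{wt}(e)+\mathrm{wt}(Me)\le B-1$. This fails as soon as $a$ or $c$ has nonzero words of valuation greater than $s$.
\begin{itemize}
\item Two such $x$ may differ on a word of $\mathrm{supp}(a)\setminus I$, and $Mx$ may differ on a word of $\mathrm{supp}(c)\setminus J$. So neither $\mathrm{wt}(e)\le|I'|$ nor $\mathrm{wt}(Me)\le|J'|$ is available.
\item Applying the branch property to digit vectors only gives $\langle e\rangle_s=0$, that is $e\in(2^{s+1}R_k)^n$, not $e=0$.
\end{itemize}
Your last paragraph makes the same choice explicitly (``instead of through supports''). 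The phrase ``summed freely'' leaves it unclear whether the higher-valuation coordinates are fixed in the injectivity map. They must be, because their factors $P(a_i,x_i)$ and $P((Mx)_j,c_j)$ are part of the sum you bound by row and column sums.

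\textbf{The repair.} This is what the paper does implicitly by saying ``the rest of the proof of Proposition~\ref{prop:hull} applies.'' Use the exact-valuation sets only to choose $I'$ and $J'$, and run the injectivity with the full supports:
\begin{itemize}
\item Fix $x$ on $\mathrm{supp}(a)\setminus I'$ and $Mx$ on $\mathrm{supp}(c)\setminus J'$.
\item Every contributing $x$ has support exactly $\mathrm{supp}(a)$, and $Mx$ has support exactly $\mathrm{supp}(c)$, since $F$ is a bijection. So the difference $e$ of two contributing $x$ with the same fixed data is supported in $I'$, and $Me$ is supported in $J'$.
\item Hence $\mathrm{wt}(e)+\mathrm{wt}(Me)\le B-1$, and $e=0$ by the branch property on $R_k^n$.
\end{itemize}
The remaining sum is then at most
\[
\prod_{i\in\mathrm{supp}(a)\setminus I'}\ \sum_v\frac{\delta_F(a_i,v)}{q^k}\cdot\prod_{j\in\mathrm{supp}(c)\setminus J'}\ \sum_u\frac{\delta_F(u,c_j)}{q^k}=1,
\]
with the higher-valuation words among these row and column sums. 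With this change your proof is complete and coincides with the paper's.
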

\begin{proof}
Put $c=M^{-1}b$ and $P(u,v)=\delta_F(u,v)/q^k$. As in the proof of Proposition~\ref{prop:hull} the probability is
$\sum_x\prod_iP(a_i,x_i)\prod_jP((Mx)_j,c_j)$. In a nonzero term $v_2(x_i)=v_2(a_i)$ and $v_2((Mx)_j)=v_2(c_j)$ for all
$i$ and $j$, with $v_2(0)=k$ (Proposition~\ref{prop:valuation}). All words of $a$, hence of $x$ and of $Mx$, lie in
$2^sR_k$. So in every nonzero term the supports of the digit vectors $\langle x\rangle_s$ and
$\langle Mx\rangle_s=\ol M\langle x\rangle_s$ are the sets
\[
U=\{i:\ v_2(a_i)=s\},\qquad V=\{j:\ v_2(c_j)=s\},
\]
which depend only on $a$ and $b$, not on $x$. Since $U\ne\emptyset$ and $\ol M$ has branch number $B$, a nonzero term
forces $|U|+|V|\ge B$. Choose $I'\subseteq U$ and $J'\subseteq V$ with $|I'|+|J'|=B-1$. In every nonzero term the
factors indexed by $I'$ and $J'$ have inputs of exact valuation $s$ and are at most $D_s/q^k$, and the rest of the
proof of Proposition~\ref{prop:hull} applies with $\eta$ replaced by $D_s/q^k$. For $N\ge2$ rounds, the difference after
$N-2$ rounds again has layer $s$, since valuations are kept word by word and $\ol M$ is invertible, and the
conditioning argument of Proposition~\ref{prop:hull} applies.
\end{proof}
The proof uses only that $F$ is a bijection with $v_2(F(x+\alpha)-F(x))=v_2(\alpha)$ for all $x$ and $\alpha\ne0$.

The fixed output difference is what makes the sets $U$ and $V$ independent of $x$. Without valuation preservation one
would have to split the sum according to the support of $\langle Mx\rangle_s$, at the price of a factor up to $2^n$.
In the example ($m=7$, $k=3$, $n=4$, $B=5$) the bounds of Proposition~\ref{prop:hullunit} are:
\begin{center}
\begin{tabular}{lccc}
\toprule
 & $s=0$ (unit word) & $s=1$ & $s=2$ (top)\\ \midrule
 $F^\sharp$ of $P_{7,5}$ & $(2/q^2)^4=2^{-52}$ & $(1/q)^4=2^{-28}$ & $(2/q)^4=2^{-24}$\\
 natural lift of $P_{7,5}$ & $(1/q)^4=2^{-28}$ & $(2/q)^4=2^{-24}$ & $(2/q)^4=2^{-24}$\\ \bottomrule
\end{tabular}
\end{center}
They use $D_0=2q$ for $F^\sharp$ (Theorem~\ref{thm:oneterm}) and $D_0=q^2$ for the natural lift (Corollary~\ref{cor:natural}
and Lemma~\ref{lem:ordinary}), $D_1=q^2$ and $2q^2$ (Corollary~\ref{cor:interfamily}), and $D_2=2q^2$ for every lift
(Proposition~\ref{prop:layerchar}). So the coefficient choice changes the differential bound in every layer below the
top. These are bounds for single differentials $(a,b)$; the characteristic bounds $2^{-260}$ and $2^{-140}$ above are a
different quantity, and the bound over all differentials stays $2^{-24}$.

\begin{proposition}[Fixed keys]\label{prop:fixedkey}
In the setting of Proposition~\ref{prop:hull}, fix $a\ne0$ and $b$. For a key pair $\kappa=(\kappa_1,\kappa_2)$ let
$P_\kappa$ be the proportion of $x_0$ for which $(x_0,x_0+a)$ has output difference $b$; it depends only on $\kappa_1$.
The average of $P_\kappa$ over all key pairs is the probability of Proposition~\ref{prop:hull}. Hence, for every
$t>0$, at most a proportion $\eta^{B-1}/t$ of the key pairs satisfies $P_\kappa\ge t$, and at most
$(D_s/q^k)^{B-1}/t$ under the hypotheses of Proposition~\ref{prop:hullunit}.
\end{proposition}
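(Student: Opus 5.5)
The plan is to prove three claims in turn: that $P_\kappa$ depends only on $\kappa_1$, that its average over key pairs equals the probability of Proposition~\ref{prop:hull}, and then the two Markov bounds.

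For the first claim, unfold the two rounds. With $x_1=M S(x_0)+\kappa_1$ and $x_1'=M S(x_0+a)+\kappa_1$, the outputs are $M S(x_1)+\kappa_2$ and $M S(x_1')+\kappa_2$. Their difference $M\bigl(S(x_1')-S(x_1)\bigr)$ does not involve $\kappa_2$, because the key is added to both members of the pair and cancels under ring subtraction. So the event ``output difference $=b$'' is determined by $x_0$ and $\kappa_1$ alone. Hence $P_\kappa$ is a function of $\kappa_1$ only, and for fixed $\kappa$ it is the proportion of $x_0\in R_k^n$ for which this event holds.

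For the averaging identity, fix $\kappa=(\kappa_1,\kappa_2)$. The quantity $P_\kappa$ is the conditional probability of the event given $(\kappa_1,\kappa_2)=\kappa$ when $x_0$ is uniform and independent of the keys. Averaging over all key pairs with the uniform distribution is the law of total probability: the mean of $P_\kappa$ over independent uniform $\kappa_1,\kappa_2$ equals the joint probability over $(x_0,\kappa_1,\kappa_2)$. That joint probability is exactly the quantity bounded in Proposition~\ref{prop:hull}.

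For the bounds, $P_\kappa\ge0$, so Markov's inequality applied to the random variable $P_\kappa$ on the uniform key space gives
\[
\#\{\kappa:\ P_\kappa\ge t\}/\#\{\kappa\}\le \mathbb{E}[P_\kappa]/t .
\]
Combining this with $\mathbb{E}[P_\kappa]\le\eta^{B-1}$ from Proposition~\ref{prop:hull} yields the first bound. Under the hypotheses of Proposition~\ref{prop:hullunit}, namely $G=F$ a lift and $a$ of layer $s$, the same average is at most $(D_s/q^k)^{B-1}$, which yields the second bound.

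There is no real obstacle. The only point needing care is to state precisely that the probability in Proposition~\ref{prop:hull} is taken over the product measure of independent uniform $x_0,\kappa_1,\kappa_2$, so that the averaging identity is just Fubini on a finite set.
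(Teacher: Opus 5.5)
Your proposal is correct and follows the paper's argument. The average of $P_\kappa$ over key pairs is a count of triples $(x_0,\kappa_1,\kappa_2)$, hence equals the probability of Proposition~\ref{prop:hull}, and the tail bounds are Markov's inequality combined with Propositions~\ref{prop:hull} and~\ref{prop:hullunit}; your explicit check that $\kappa_2$ cancels in the output difference supplies the one step the paper leaves unstated.
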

\begin{proof}
Averaging over $\kappa$ counts the triples $(x_0,\kappa_1,\kappa_2)$, which gives the first statement. The second is
Markov's inequality.
\end{proof}

Let a fixed differential have average probability at most $\varepsilon$, for instance $\varepsilon=\eta^{B-1}$ or
$\varepsilon=(D_s/q^k)^{B-1}$, and let $0<\delta<1$. Taking $t=\varepsilon/\delta$, all key pairs outside a set of
proportion at most $\delta$ give this differential probability less than $\varepsilon/\delta$. So a small average
bound gives, for each fixed differential, a bound valid for all key pairs outside a small proportion. It does not give
the bound $\varepsilon$ itself for each key, and it is not a simultaneous assertion for all differentials.

\paragraph{The residue partition.}
The following elementary fact fixes what the bound can and cannot certify.

\begin{proposition}[Residue partition]\label{prop:congpart}
Let $E$ be the map $x\mapsto x+\kappa_0$ followed by $N$ rounds $x\mapsto M\cdot S(x)+\kappa_i$ on $R_k^n$, where $S$
applies any polynomial $G\in R_k[X]$ to each word and $M$ is any additive map of $R_k^n$. For every choice of keys,
$x\equiv y\pmod{2R_k^n}$ implies $E(x)\equiv E(y)\pmod{2R_k^n}$. Hence, for two distinct inputs in the same residue
class, the outputs lie in the same residue class with probability $1$ for every key, while for a uniformly random
permutation of $R_k^n$ this probability is $(q^{(k-1)n}-1)/(q^{kn}-1)<q^{-n}$.
\end{proposition}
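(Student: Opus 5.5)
The residue-class invariance is proved one round at a time, and the random-permutation count is then a direct computation.

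\emph{Invariance.} Fix the keys. It suffices to show that each elementary map sends $x\equiv y\pmod{2R_k^n}$ to outputs that are again congruent modulo $2R_k^n$, since $E$ is a composition of such maps.
\begin{itemize}[label=$\cdot$]
\item Key addition: $(x+\kappa)-(y+\kappa)=x-y$, so congruence is kept trivially.
\item S-box layer: for each word, $x_j-y_j$ divides $G(x_j)-G(y_j)$ in $R_k$, since $X-Y$ divides $X^e-Y^e$ in $\Z[X,Y]$. Hence $x_j-y_j\in2R_k$ implies $G(x_j)-G(y_j)\in2R_k$. This is the same remark as in the proof of Proposition~\ref{prop:layerchar}.
\item Linear layer: $M$ is additive, so it commutes with multiplication by the integer $2$ (because $M(2z)=M(z)+M(z)$). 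Thus $M(x)-M(y)=M(x-y)=M(2z)=2M(z)\in2R_k^n$.
\end{itemize}
Composing these gives $E(x)\equiv E(y)\pmod{2R_k^n}$ for every key choice, so for distinct inputs in one class the outputs share a class with probability $1$.

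\emph{Random permutation.} Fix distinct $x,y$ in the same class. For a uniform permutation $\pi$, the value $\pi(y)$ is uniform on $R_k^n\setminus\{\pi(x)\}$, a set of $q^{kn}-1$ elements. The class of $\pi(x)$ modulo $2R_k^n$ has $|2R_k^n|=|2R_k|^n=q^{(k-1)n}$ elements, since $2R_k\cong R_{k-1}$ additively has $q^{k-1}$ elements. Removing $\pi(x)$ itself leaves $q^{(k-1)n}-1$ favourable values, so the probability is $(q^{(k-1)n}-1)/(q^{kn}-1)$.

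For the strict inequality, compare the two sides:
\[
q^n\bigl(q^{(k-1)n}-1\bigr)=q^{kn}-q^n<q^{kn}-1,
\]
because $q^n>1$. Dividing by $q^n\bigl(q^{kn}-1\bigr)$ gives $(q^{(k-1)n}-1)/(q^{kn}-1)<q^{-n}$.

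The only point needing care is that $M$ is merely additive, not $R_k$-linear. Additivity alone gives $M(2z)=2M(z)$, which is all the argument uses.
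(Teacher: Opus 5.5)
Your proof is correct and follows essentially the paper's argument: a common key cancels, $x-y$ divides $G(x)-G(y)$, additivity of $M$ gives $M(2R_k^n)\subseteq 2R_k^n$, and for a random permutation $q^{(k-1)n}-1$ of the $q^{kn}-1$ possible images of $y$ are favourable. You additionally verify $|2R_k|=q^{k-1}$ and the strict inequality $<q^{-n}$ explicitly, which the paper leaves implicit.
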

\begin{proof}
In $R_k$, $x-y$ divides $G(x)-G(y)$, so $2\mid x-y$ implies $2\mid G(x)-G(y)$ in every word. An additive map sends
$2R_k^n$ into itself, and a common key cancels in the difference. For a random permutation, given the image of $x$
the image of $y$ is uniform among the other $q^{kn}-1$ elements, and $q^{(k-1)n}-1$ of them lie in the residue class
of the image of $x$.
\end{proof}

The event of Proposition~\ref{prop:congpart} is the union of the $q^{(k-1)n}-1$ nonzero output differences in
$2R_k^n$, each of which obeys the per-differential bounds of Corollary~\ref{cor:crypto}, so there is no contradiction.
It is caused by the polynomial structure itself, for every
S-box and every coefficient choice, in line with the observation of~\cite{RS} that non-polynomial maps may be
preferable. This is a structural property, not a key-recovery attack. It says that a complete cipher must contain a component that does
not preserve congruence modulo~$2$: a non-polynomial permutation of $R_k$ or a linear layer that feeds higher
$2$-adic digits into the residue, such as the digit exchange over $\GR(4,m)$ studied in the author's separate manuscript mentioned in the introduction. Not every
change of group suffices: a key added by XOR on the binary digits of the coefficients preserves the partition,
since the lowest digit of the sum depends only on the lowest digits. Our results then apply to the segments built
from ring operations. Inside such a segment each unit-active S-box is crossed with probability at most $2/q^{k-1}$ for
$F^\sharp$, against $4/q^{k-1}$ or $q^{2-k}$ and more for the natural lift. If the segment starts from a uniform
state, adds an independent uniform key in $R_k$ in each of its rounds and is entered by a difference with a unit
component, Proposition~\ref{prop:markov} and Corollary~\ref{cor:crypto}, with $N$ the number of rounds of the
segment, bound characteristics through it; a bound for the whole cipher would in addition need a Markov property of
the other components. The positive content is therefore a design criterion with a sharp target (Theorem~\ref{thm:minimax}), a
certificate that is checkable modulo $8$ (Theorem~\ref{thm:spectrum} and Corollary~\ref{cor:crosslevel}), and, for
$P_{m,r}$, an optimal lift at the price of one multiplication (Theorem~\ref{thm:oneterm}). It is not a security claim
for a complete cipher.

\section{Representatives, applications and open problems}\label{sec:discussion}

\paragraph{Representatives with $\mu=0$ and $Z=\emptyset$.}
The best cell of the minimax table needs a representative with $\mu=0$ and $Z=\emptyset$. Random searches over
$p=L(A(x)^3)$ with $\F_2$-linear $A,L$ find such representatives for $m=5,7,9,11$ (for $m=7$, $238$ among
$2\cdot10^6$ trials; for $m=9$, $81$ among $3\cdot10^6$; for $m=11$, $7$ among $10^6$). The following family settles
their existence for every odd $m\ge5$.

\begin{theorem}\label{thm:repfamily}
Let $m\ge5$ be odd, $T_m=\sum_{i=0}^{m-1}X^{2^i}$, and let $d\in\F_q^*$ satisfy $\Tr(d)=\Tr(d^{-1})=0$. Put
$A_d=T_m+dX^{2^{m-2}}$ and let $p_d$ be the reduced polynomial of the function $x\mapsto A_d(x)^3$. Then $p_d$
satisfies (H) with $\rho=d^{-4}$, $\mu=0$ and $Z=\emptyset$. The number of such $d$ is $(q-3+\mathcal K_m)/4$ with
$\mathcal K_m=\sum_{x\in\F_q^*}(-1)^{\Tr(x+x^{-1})}$; this number is positive for every odd $m\ge5$.
\end{theorem}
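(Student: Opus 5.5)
The plan is to work with the additive map $A_d(x)=\Tr(x)+d\,x^{2^{m-2}}$ on $\F_q$ and to use that $p_d=A_d^3$ as a function. Every quantity in (H), $\mu$ and $Z$ then reduces to a statement about $A_d$, $A_d^2$ and the trace. The proof has four steps: bijectivity and APN, the derivative, the invariants $\mu$ and $Z$, and the count of admissible $d$.

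\emph{Step 1: bijectivity, APN and quadraticity.} Suppose $A_d(x)=0$. Then $d\,x^{2^{m-2}}=\Tr(x)\in\F_2$. If $\Tr(x)=0$ this forces $x=0$. If $\Tr(x)=1$, then $x^{2^{m-2}}=d^{-1}$, so $x=d^{-4}$, and $\Tr(x)=\Tr(d^{-1})=0$, a contradiction. Hence $A_d$ is a linear bijection. Since $m$ is odd, $x\mapsto x^3$ is an APN permutation, so $p_d$ is an APN permutation by the composition argument of Proposition~\ref{prop:resAPN}. For quadraticity, write $A_d^{(2)}=T_m+d^2X^{2^{m-1}}$ for the reduced form of $A_d^2$. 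Then $p_d$ is the reduction of the product $A_d\cdot A_d^{(2)}$ of two reduced linearized polynomials. Such a product is a sum of Dembowski--Ostrom and linear monomials, and the only term of degree $\ge q$ is $X^{2^{m-1}}\cdot X^{2^{m-1}}=X^q$, which reduces to $X$.

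\emph{Step 2: the derivative.} I would differentiate the unreduced product and then correct for the single reduction $X^q\mapsto X$. The product rule gives $A_d'\,A_d^{(2)}+A_d\,(A_d^{(2)})'=A_d^{(2)}+A_d$, since $T_m'=1$ and the other terms of $A_d$ and $A_d^{(2)}$ are even powers. The reduction replaces the derivative $0$ of the $X^q$ term by its coefficient $c$, a constant determined by $d$. This gives $p_d'=\varepsilon+K$ with $K=A_d^2+A_d$ and $\varepsilon=c$. The kernel of $K$ is $A_d^{-1}(\{0,1\})=\{0,\rho\}$, and $A_d(d^{-4})=\Tr(d^{-1})+d\cdot d^{-1}=1$, so $\rho=d^{-4}$. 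The image $K(\F_q)$ equals $\{y^2+y\}=\ker\Tr$, so $\varepsilon\notin K(\F_q)$ amounts to $\Tr(\varepsilon)=1$. I expect this trace to come out as a combination of $\Tr(1)=1$ and $\Tr(d)$, and this is where the hypothesis $\Tr(d)=0$ should enter. This bookkeeping, namely the exact value of the reduction correction $\varepsilon$, is the step I expect to be most error-prone, and I would check it by machine for $m=5$ before finalizing.

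\emph{Step 3: $\mu=0$ and $Z=\emptyset$.} Since $A_d$ is additive and $A_d(\rho)=1$,
\[
p_d(x+\rho)+p_d(x)=A_d^2A_d(\rho)+A_dA_d(\rho)^2+A_d(\rho)^3=A_d(x)^2+A_d(x)+1 .
\]
Its reduced form is $A_d^{(2)}+A_d+1$. The coefficient of $X^2$ is $1+1=0$, because $T_m$ contributes $X^2$ to both summands, while $dX^{2^{m-2}}$ and $d^2X^{2^{m-1}}$ are not $X^2$ for $m\ge5$. Hence $\mu=0$. Next,
\[
\Lambda(a)=(1+\rho)\bigl(A_d(a)^2+A_d(a)\bigr)+1+\rho\,\varepsilon .
\]
This value is constant on pairs and is an affine function of $t=A_d(a)$, which runs over all of $\F_q$. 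To show that $\Lambda$ has no zero, I would substitute $t$ and reduce to the claim that $s^2+s=(1+\rho\varepsilon)/(1+\rho)$ has no solution $s$ (taking $\rho\ne1$). That holds if and only if the trace of the right-hand side is $1$, and I expect this trace to be the place where $\Tr(d)=\Tr(d^{-1})=0$ is used a second time. If instead $\rho=1$, one checks directly that $1+\varepsilon\ne0$.

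\emph{Step 4: counting $d$.} The number of admissible $d$ is
\[
\tfrac14\sum_{d\ne0}\bigl(1+(-1)^{\Tr d}\bigr)\bigl(1+(-1)^{\Tr d^{-1}}\bigr)=\tfrac14\bigl((q-1)-1-1+\mathcal K_m\bigr)=\tfrac{q-3+\mathcal K_m}{4}.
\]
For positivity I would invoke the Weil bound for Kloosterman sums, $|\mathcal K_m+1|\le2^{m/2+1}$. It gives $q-3+\mathcal K_m\ge 2^m-4-2^{m/2+1}>0$ for $m\ge5$; for instance at $m=5$ this is $32-4-11.4>0$.
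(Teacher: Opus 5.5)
\textbf{Gap: the decisive computations are only anticipated.} Your reduction is the same as the paper's. You show that $A_d$ is bijective, that $p_d'=\varepsilon+K$ with $K=A_d^2+A_d$ as a function (so $\ker K=\{0,\rho\}$ with $A_d(\rho)=1$ and $K(\F_q)=\ker\Tr$), and that $\Lambda$ is affine in $t=A_d(a)$. Your shortcuts via the product rule and via $p_d(x+\rho)+p_d(x)=A_d(x)^2+A_d(x)+1$ are valid specializations of the paper's general coefficient formulas for $A(x)^3$. The problem is that the two computations that actually decide (H) and $Z=\emptyset$ are left as expectations, and one expectation points the wrong way.

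For $\varepsilon$: the only degree-$q$ term of $A_d\,A_d^{(2)}$ is $X^{2^{m-1}}\cdot(1+d^2)X^{2^{m-1}}$. Hence $\varepsilon=c=1+d^2$ and $\Tr(\varepsilon)=\Tr(1)+\Tr(d)=1$. This is where $\Tr(d)=0$ is used. It also gives $d\ne1$, hence $\rho\ne1$, so your case $\rho=1$ never occurs.

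For $Z=\emptyset$ you need
\[
\frac{1+\rho\varepsilon}{1+\rho}=\frac{d^4+d^2+1}{d^4+1}=1+\Bigl(\frac{d}{(d+1)^2}\Bigr)^2=1+(u+u^2)^2,\qquad u=\frac{1}{d+1},
\]
whose trace is $\Tr(1)=1$. This holds for every $d\notin\F_2$. So, contrary to your expectation, the trace hypotheses are not used a second time here. They enter only through $\rho=d^{-4}$ (where $\Tr(d^{-1})=0$ makes $A_d$ bijective) and through $\Tr(\varepsilon)=1$. Searching for a second use of them in this step would be misdirected; the missing idea is the rewriting as $1+(u+u^2)^2$. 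Until these two computations are supplied, the proposal has not established $\varepsilon\notin K(\F_q)$ or $Z=\emptyset$.

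\textbf{Minor: the Weil bound is misstated.} Your form $|\mathcal K_m+1|\le 2^{m/2+1}$ is false. The Weil bound applies to the sum over $\F_q^*$ and reads $|\mathcal K_m|\le2\sqrt q$. At $m=5$ one has $\mathcal K_5=11$, consistent with the $10$ admissible $d$, so $|\mathcal K_5+1|=12>2^{7/2}$. You only use the lower half, $\mathcal K_m\ge-1-2\sqrt q$, which does follow from the correct bound, so your positivity conclusion survives. With the correct bound it reads $q-3-2\sqrt q>0$ for $q\ge32$.
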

\begin{proof}
Write $A=\sum_ia_iX^{2^i}$ for an invertible $\F_2$-linear map, indices modulo $m$. As a function
$A(x)^3=A(x)^2A(x)=\sum_{i,j}a_i^2a_jx^{2^{i+1}+2^j}$, and the reduced polynomial has the Dembowski--Ostrom coefficients
$c_{uv}=a_{u-1}^2a_v+a_{v-1}^2a_u$ at $X^{2^u+2^v}$ ($u\ne v$) and the linear coefficients $a_{w-2}^2a_{w-1}$ at
$X^{2^w}$. Only $X^{1+2^v}$ and $X$ have odd exponent, so $\varepsilon=a_{m-2}^2a_{m-1}$ and
$K=\sum_{v\ge1}c_{0v}X^{2^v}$, which as a function equals $a_{m-1}^2A+a_0A^2=a_0A(A+r)$ with $r=a_{m-1}^2/a_0$ when $a_0a_{m-1}\ne0$ (as below). Hence
$\ker K=A^{-1}\{0,r\}$, so $\rho=A^{-1}(r)$, and $K(\F_q)=a_0r^2\{v^2+v\}=a_0r^2\ker\Tr$. In $p(X+\rho)+p(X)$ only the
terms $c_{1v}X^{2+2^v}$ contribute to $X^2$, which gives $\mu=\sum_{v\ne1}c_{1v}\rho^{2^v}=a_0^2r+a_1r^2$. With
$w=A(a)$ we have $p(a+\rho)+p(a)=(w+r)^3+w^3$ and $p'(a)=\varepsilon+a_0(w^2+rw)$, so
$\Lambda(a)=(r+a_0\rho)(w^2+rw)+r^3+\rho\varepsilon$.

For $A=A_d$, $m\ge5$ gives $a_0=a_1=a_{m-1}=1$ and $a_{m-2}=1+d$, so $r=1$ and $\mu=1+1=0$. If $A_d(x)=0$ then
$dx^{2^{m-2}}=\Tr(x)$; $\Tr(x)=0$ forces $x=0$, and $\Tr(x)=1$ forces $x=d^{-4}$ and $\Tr(d^{-1})=1$. So $A_d$ is
invertible, $p_d$ is a reduced quadratic APN permutation (the cube is one for odd $m$), and
$A_d(d^{-4})=\Tr(d^{-1})+1=1=r$ gives $\rho=d^{-4}$. Next $\varepsilon=(1+d)^2$ and $K(\F_q)=\ker\Tr$, so
$\varepsilon\notin K(\F_q)$ because $\Tr(1+d)=1$; thus (H) holds. Finally $r+\rho=1+d^{-4}\ne0$ since $d\ne1$, and
\[
\frac{r^3+\rho\varepsilon}{(r+\rho)r^2}=\frac{d^4+d^2+1}{d^4+1}=1+\Bigl(\frac{1}{d+1}+\frac{1}{(d+1)^2}\Bigr)^2
\]
has trace $1$, so the constant of the affine map $\Lambda$ is not in the image of its linear part, $(1+\rho)\ker\Tr$,
and $Z=\emptyset$. For the count,
$\frac14\sum_{d\ne0}(1+(-1)^{\Tr d})(1+(-1)^{\Tr d^{-1}})=\frac14(q-3+\mathcal K_m)$, and the Weil bound
$|\mathcal K_m|\le2\sqrt q$ for Kloosterman sums makes this positive for $q\ge32$.
\end{proof}

Within the family the conditions are also necessary: $A_d$ is singular when $\Tr(d^{-1})=1$, and
$\varepsilon\in K(\F_q)$ when $\Tr(d)=1$. They coincide with the admissibility conditions of the trace family below.
If all coefficients of $p$ lie in $\F_2$, then $\rho=1$, $\varepsilon=1$ and $p(0)+p(1)=1$ because $p$ permutes
$\F_2$, so $\Lambda(0)=0$ and $Z\ne\emptyset$; representatives with $\mu=0$ and $Z=\emptyset$ therefore need coefficients
outside $\F_2$. The number of admissible $d$ is $10,28,126,528,2002$ for $m=5,\dots,13$. The field-level part of the proof (invertibility of $A_d$,
$A_d(d^{-4})=1$, $\ker K=\{0,d^{-4}\}$, $\varepsilon\notin K(\F_q)$ and the absence of zeros of $\Lambda$) is formalized
(Appendix~\ref{app:verification}); the coefficient formulas for $p_d$ and the count are on paper. We checked
Theorem~\ref{thm:repfamily} for every admissible $d$ with $m\le13$ by evaluating $p_d$ at all points (including a full
difference table for $m\le9$), for sampled $d$ up to $m=31$ symbolically, and for several $d$ with the independent
invariant program of Appendix~\ref{app:verification}. Combined with Theorem~\ref{thm:minimax}, every odd $m\ge5$ thus
has lifts with $\Dexc=q$ at every level $k\ge3$. The representatives $p_d$ are not sparse: $p_d$ has $3m-3$
monomials. A second family has a bounded number of monomials.

\begin{theorem}[A sparse family]\label{thm:sparse}
Let $m\ge5$ be odd, $c\in\F_q\setminus\F_2$, $B_c=X^{q/4}+X^{q/2}+X+X^2+cX^4$, and let $p_c$ be the reduced polynomial of
$x\mapsto B_c(x)^3$; it has $16$ monomials for $m\ge7$, $12$ for $m=5$, and degree $3q/4$. If $B_c$ is injective, then
$p_c$ satisfies (H) with $\varepsilon=1$, $\rho=B_c^{-1}(1)$, $\mu=0$ and $Z=\emptyset$. For every odd $m\ge5$ at least two
such $c$ exist; there are $15,42,126,506,2496$ of them for $m=5,7,9,11,13$.
\end{theorem}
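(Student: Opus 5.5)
Throughout the plan $B=B_c$ and $p=p_c$, and I follow the template of the proof of Theorem~\ref{thm:repfamily}. Write $B=\sum_i a_iX^{2^i}$ with indices modulo $m$. Then $a_{m-2}=a_{m-1}=a_0=a_1=1$, $a_2=c$, and the other $a_i$ vanish; for $m=5$ we have $m-2=3$, which is distinct from $2$, so all five coefficients are separate. That proof gives the general formulas
\[
\varepsilon=a_{m-2}^2a_{m-1},\qquad K=a_{m-1}^2B+a_0B^2=a_0B(B+r),\qquad r=a_{m-1}^2/a_0,
\]
together with $\mu=a_0^2r+a_1r^2$ and $\Lambda(a)=(r+a_0\rho)(w^2+rw)+r^3+\rho\varepsilon$ for $w=B(a)$. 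The only input these formulas need is that $B$ is invertible, so that $p=B^3$ is a reduced quadratic APN permutation (the cube is APN and bijective for odd $m$).

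Substituting the coefficients gives $\varepsilon=1$ and $r=1$, hence $\mu=1+1=0$. Since $\ker K=B^{-1}\{0,1\}$, we get $\rho=B^{-1}(1)$. The image is $K(\F_q)=\{v^2+v\}=\ker\Tr$, and $\Tr(1)=1$, so $\varepsilon\notin K(\F_q)$ and (H) holds.

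For $Z$, note that $\Lambda(a)=(1+\rho)(w^2+w)+1+\rho$ is affine in $w$. If $\rho\ne1$, then $\Lambda(a)=0$ would require $w^2+w=1$, which has no solution because $\Tr(1)=1$. So $Z=\emptyset$ reduces to $\rho\ne1$, that is, $B(1)\ne1$. Now $B(1)=1+1+1+1+c=c$, and $c\notin\F_2$, so $\rho\ne1$. This is the clean part of the argument.

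Next comes the monomial count. I would expand $B(x)^2B(x)$ and collect exponents $2^{i+1}+2^j$ over the $5\times5$ pairs. Diagonal-type terms give the linear monomials $X^{2^{w}}$ with coefficient $a_{w-2}^2a_{w-1}$. Off-diagonal pairs cancel via the formula $c_{uv}=a_{u-1}^2a_v+a_{v-1}^2a_u$.

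The count of surviving terms (16 for $m\ge7$, 12 for $m=5$) then follows from listing the supports $\{-2,-1,0,1,2\}$ and checking which $c_{uv}$ vanish; for $m=5$ some indices coincide modulo $5$. The degree comes out as $3q/4$, from $X^{q/4+q/2}$ with coefficient $c_{m-2,m-1}=a_{m-3}^2a_{m-1}+a_{m-2}^2a_{m-2}=1$. All of this is routine bookkeeping.

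The main obstacle is existence: showing at least two $c\in\F_q\setminus\F_2$ make $B_c$ injective for every odd $m\ge5$. Write $B_c=N+cX^4$ with $N$ linear. Then $B_c(x)=0$ for some $x\ne0$ exactly when $c=N(x)/x^4$. So $B_c$ is injective precisely when $c$ lies outside the image of $x\mapsto N(x)x^{-4}$ on $\F_q^*$. This image has at most $q-1$ elements, but we need at least two omitted values beyond $\{0,1\}$.

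My first approach is to show the map $x\mapsto N(x)/x^4$ is far from injective. It is invariant under $x\mapsto$ nothing obvious, so instead I would count collisions. If I can prove that some value $c_0$ has at least two preimages, I get one omitted value; I would look for a structural source of repeated values, perhaps from $\F_2$-linearity of $N$ restricted to lines $\{x,\lambda x\}$. Failing that, I would estimate the image size by a character-sum or Weil-type bound on the curve $N(x)y^4=N(y)x^4$. If neither works, the fallback is to exhibit explicit $c$, for example checking whether $0$ or $1$ is attained and finding a third omitted value by hand. The computed counts ($15,42,\dots$) suggest roughly $q/e$ omissions, consistent with a random-map heuristic, but proving even two in general seems to need the curve-counting argument, and that is where I expect the real work.
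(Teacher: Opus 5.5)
Your verification of (H), $\varepsilon=1$, $\rho=B_c^{-1}(1)$, $\mu=0$ and $Z=\emptyset$ is the paper's own argument. You specialize the formulas of Theorem~\ref{thm:repfamily}, get $r=1$, $K(\F_q)=\ker\Tr$ and $\Lambda(a)=(1+\rho)(w^2+w+1)$, and use $\rho\ne1$ because $B_c(1)=c$. Your monomial bookkeeping is also correct. One slip: for $m=5$ one has $a_{m-3}=a_2=c$, so the coefficient of $X^{3q/4}$ is $c^2+1$, not $1$; it is still nonzero because $c\notin\F_2$. The genuine gap is the existence statement. You leave it open, and the route you outline would not close it as stated.

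Two ideas are missing.

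\emph{The counting.} The preimages of $c$ under $x\mapsto N(x)x^{-4}$ are exactly $\ker B_c\setminus\{0\}$, so the kernels partition $\F_q^*$. This gives $\#\{c: B_c\text{ injective}\}=1+\sum_{\ker B_c\ne0}(|\ker B_c|-2)$. Kernels are $\F_2$-spaces, so a single collision forces a kernel of dimension $\ge2$ and hence at least three injective $c$. At least two of these lie outside $\F_2$, because $B_0(1)=0$. Your tally of ``one omitted value'' per collision misses this and would not by itself produce two values outside $\F_2$.

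\emph{The curve.} Your curve $N(x)y^4=N(y)x^4$ has degree $q/2+4$, so any Weil-type bound on it is vacuous. The key step is to undo the Frobenius twist. With $x=y^4$ and $z=y^{-1}$, $B_c(x)=y+y^2+y^4+y^8+cy^{16}$ vanishes if and only if $c=h(z)$ with $h(z)=z^{15}+z^{14}+z^{12}+z^8$, whose degree does not depend on $q$. Collisions then become $\F_q$-points with $z\ne w$, $zw\ne0$ on the degree-$14$ curve $(h(z)+h(w))/(z+w)=0$. Even with this reduction you still need three further steps, none of which is in your plan:
\begin{enumerate}[label=(\roman*)]
\item an absolutely irreducible component defined over $\F_2$, which the paper gets from the $125769$ points over $\F_{2^{17}}$, far above the B\'ezout bound $98$ for points lying on two conjugate components;
\item the Aubry--Perret bound for $q\ge2^{15}$, after discarding points at infinity and on $z=0$, $w=0$, $z=w$;
\item exhaustive computation for $m\le13$, which also supplies the exact counts $15,42,126,506,2496$.
\end{enumerate}
Exhibiting explicit $c$ by hand cannot replace this argument for all odd $m$.
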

\begin{proof}
In the proof of Theorem~\ref{thm:repfamily} take $a_{m-2}=a_{m-1}=a_0=a_1=1$, $a_2=c$ and $a_i=0$ otherwise. Then
$r=1$, $\varepsilon=1\notin K(\F_q)=\ker\Tr$, and $\mu=a_0^2r+a_1r^2=0$, so (H) holds. Since $B_c(1)=c\ne1$ we have
$\rho\ne1$, and $\Lambda(a)=(1+\rho)(w^2+w)+1+\rho=(1+\rho)(w^2+w+1)$ with $w=B_c(a)$; for odd $m$ the polynomial $w^2+w+1$
has no root in $\F_q$, so $Z=\emptyset$.

For existence, write $x\ne0$ as $x=y^4$ and put $z=y^{-1}$. Then $B_c(x)=y+y^2+y^4+y^8+cy^{16}$ vanishes exactly when
$c=h(z)$ with $h=Z^{15}+Z^{14}+Z^{12}+Z^8$. So every $x\ne0$ lies in the kernel of exactly one $B_c$, and counting
$\sum_c(|\ker B_c|-1)=q-1$ gives $\#\{c:B_c\text{ injective}\}=1+\sum_{\ker B_c\ne0}(|\ker B_c|-2)$. This is at least $3$
as soon as some kernel has dimension at least $2$, that is, as soon as $h(z)=h(w)$ for some $z\ne w$ in $\F_q^*$. Since
$B_0$ is singular and $\F_2$ contains only one further value, at least two $c$ then remain. The pairs with $h(z)=h(w)$,
$z\ne w$, are the points of the plane curve $Q(z,w)=(h(z)+h(w))/(z+w)=0$ of degree $14$; neither $z=w$ (where
$Q=z^{14}$) nor the axes are components. Over $\F_{2^{17}}$ this curve has $125769$ affine points. If no absolutely
irreducible component of $Q$ were defined over $\F_2$, the Frobenius orbits of the components would have lengths
between $2$ and $14$, none dividing the prime $17$, so every $\F_{2^{17}}$-point would lie on two distinct components, namely on a component $Y$ and on its conjugate $\sigma^{17}(Y)\ne Y$,
and by Bézout there would be at most $14^2/2=98$ such points. Hence $Q$ has an absolutely irreducible component $Y$
of degree $e\le14$ defined over $\F_2$. By the Weil bound for singular curves of Aubry and Perret~\cite{AP},
$Y$ has at least $q+1-(e-1)(e-2)\sqrt q$ points over $\F_q$ on its projective closure; removing at most $e$ points at
infinity and at most $3e$ points on the lines $z=0$, $w=0$, $z=w$ leaves a positive number for $q\ge2^{15}$. The
values for $m\le13$ were computed exhaustively.
\end{proof}

The counts for $m\le13$ and the point count over $\F_{2^{17}}$ were each obtained by two independent programs. We
checked Theorem~\ref{thm:sparse} symbolically for every admissible $c$ with $m\le13$ and for samples up to $m=31$.

\paragraph{Optimal lifts at no extra cost.}
For a representative with $\mu=0$, optimality at every level only asks that $A_F$ have no zero
(Theorems~\ref{thm:minimizers} and~\ref{thm:CL}), and one coefficient digit suffices.

\begin{proposition}\label{prop:lowcost}
Let $p$ satisfy (H) with $\mu=0$, and let $F$ be a lift of $p$ whose exponents have binary weight at most $2$, for
instance any lift with the monomials of $p$. Then $A_F=A_F(0)+L_p$ with an $\F_2$-linear map $L_p$ that does not depend on $F$ and
satisfies $L_p(\rho)=0$. Let $j\ge1$ be such that $X^{1+2^j}$ has a nonzero coefficient in $p$ (such $j$ exists since $K\ne0$), let
$\tilde b\in R_k$ be its coefficient in $F$, with residue $b$, and let $F_u$ be $F$ with $\tilde b$ replaced by $\tilde b+2u$. Then $A_{F_u}=A_F+\ol u\rho^{2^j}$, and $F_u$ is optimal at every level $k\ge3$ for at least half of
the residues $\ol u$. It has the same monomials and degree as $F$, and if $b\notin\F_2$ it costs no extra operation in the termwise
evaluation model of Section~\ref{sec:family}.
\end{proposition}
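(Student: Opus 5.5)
The plan is to compute $A_F$ monomial by monomial modulo $4$, using that $A_F(a)$ is the residue of $\tfrac12(F'(x+\tau)-F'(x))$ and that the map $F\mapsto A_F$ is additive. For a monomial $\tilde c X^e$ with $e=2^i+2^j$ or $e=2^i$, $F'$ contributes $e\tilde c X^{e-1}$. For $e=2^i$ with $i\ge1$ the contribution to $F'$ is in $2R_k$. The difference $2^i\tilde c\bigl((x+\tau)^{2^i-1}-x^{2^i-1}\bigr)$ is then in $4R_k$ as soon as $i\ge2$, or for $i=1$ since $(x+\tau)-x=\tau$ gives $2\tilde c\tau$, a constant. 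For $e=1+2^j$ with $j\ge1$ the derivative contains $\tilde c(1+2^j)X^{2^j}$. Halving the difference $(x+\tau)^{2^j}-x^{2^j}$ modulo $4$ gives, by the identity $(X+1)^{2^j}\equiv X^{2^j}+1+2X^{2^{j-1}}$-type expansion used in Proposition~\ref{prop:I45}, a residue $b\,(\text{additive in } a)+\text{const}$. Here the additive part comes from $\binom{2^j}{2^{j-1}}/2$ cross terms, of the form $b\,(a\rho)^{2^{j-1}}$ plus lower-order terms, and the constant depends on $\tilde c$ modulo $4$. For $e=2^i+2^j$ with $1\le i<j$ the derivative is $2^i(\cdots)+2^j(\cdots)$, so only $i=1$ survives, contributing $2\tilde c X^{1+2^j}$ plus a term killed mod $4$. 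Its half-difference has residue $c\bigl((a+\rho)^{1+2^j}+a^{1+2^j}\bigr)$, which is affine in $a$. The upshot is that $A_F$ is affine, $A_F=A_F(0)+L_p$. The linear part depends only on the residues of the coefficients, since every place where the higher digits of $\tilde c$ enter is a constant. Also $L_p(\rho)=A_F(\rho)-A_F(0)=0$ by $\rho$-periodicity (Lemma~\ref{lem:A}(iii)).

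Next I treat the modified coefficient. Replacing $\tilde b$ by $\tilde b+2u$ adds $2uX^{1+2^j}=2H$ with $H=uX^{1+2^j}$. By Proposition~\ref{prop:Acontrol}(i), $A_{F_u}=A_F+D_\rho h$, where $h$ is the residue of $H'=u(1+2^j)X^{2^j}$, that is, $h=\ol u X^{2^j}$. Then $D_\rho h=\ol u\rho^{2^j}$, which is constant. The monomial set and the degree are unchanged. If $b\notin\F_2$, the term already needs a coefficient multiplication, so changing $\tilde b$ costs nothing extra.

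For optimality, Theorems~\ref{thm:minimizers} and~\ref{thm:CL} apply with $\mu=0$, so $F_u$ is optimal at every level iff $A_{F_u}$ has no zero. Now $A_{F_u}=\bigl(A_F(0)+\ol u\rho^{2^j}\bigr)+L_p$, and it is zero-free iff the constant $A_F(0)+\ol u\rho^{2^j}$ lies outside the image $L_p(\F_q)$. Since $L_p(\rho)=0$ with $\rho\ne0$, this image has size at most $q/2$. As $\ol u$ ranges over $\F_q$, the constant ranges bijectively over $\F_q$ because $\rho^{2^j}\ne0$. So at least $q/2$ values of $\ol u$ avoid the image, which gives at least half.

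The main obstacle is the first paragraph: checking carefully that the higher digits of the coefficients only shift $A_F$ by a constant, and that every monomial of binary weight at most $2$ contributes an affine function. I would organize this as an additivity lemma and handle the three exponent shapes separately, with exact arithmetic modulo $4$. If a cleaner route is needed, an alternative is to note that $F-F_{\mathrm{nat}}=2H$ with $H$ having the same weight-$\le2$ exponents. Then $D_\rho$ of the residue of $H'$ is $D_\rho$ of a linearized polynomial plus a constant, hence a constant. This reduces the claim to the affineness of $A_{F_{\mathrm{nat}}}$, which follows from the same expansion.
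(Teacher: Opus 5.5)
Your proposal is correct and follows the same route as the paper: you compute $A_F$ monomial by monomial modulo $4$ to show it is affine with linear part fixed by $p$, shift it by the constant $\ol u\rho^{2^j}$ via Proposition~\ref{prop:Acontrol}(i) with $H=uX^{1+2^j}$, and count against an image of size at most $q/2$. Your derivation of $L_p(\rho)=0$ from the $\rho$-periodicity of $A_F$ (Lemma~\ref{lem:A}(iii)) is a slicker substitute for the paper's explicit formula for $L_p$. One point needs stating precisely: for $e=1+2^j$ the single-monomial difference has residue $c\rho^{2^j}\neq0$ and cannot be halved on its own, so, as the paper does, halve only the $x$-dependent parts and note that the $x$-independent parts sum to an element of $2R_k$ because the total and the $x$-dependent parts do.
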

\begin{proof}
Compute $A_F(a)$, the residue of $(F'(x+\tau)-F'(x))/2$ with $\ol x=a$, $\ol\tau=\rho$, monomial by monomial; let $bX^e$ be a monomial of $F$. Exponents
$e=1$ and $e\equiv0\pmod 4$ contribute no $x$-dependent term, $e=2+2^v$ ($v\ge2$) the term
$\ol b(a\rho^{2^v}+\rho a^{2^v})$, and, since $(x+\tau)^{2^v}\equiv
x^{2^v}+\tau^{2^v}+2(x\tau)^{2^{v-1}}\pmod4$, $e=1+2^v$ ($v\ge1$) the term $\ol b(a\rho)^{2^{v-1}}$. The
$x$-independent parts of all monomials add up to an element of $2R_k$, because the total and the $x$-dependent parts
do, so they contribute a constant. Hence
$A_F(a)=A_F(0)+L_p(a)$ with $L_p(a)=K(\sqrt{a\rho})+\rho K_1(a)+\mu a$, where $c_{uv}=c_{vu}$ is the coefficient of $X^{2^u+2^v}$ in $p$ and $K_1=\sum_{v\ne1}c_{1v}X^{2^v}$; the
coefficients of $p$ alone determine $L_p$, and $L_p(\rho)=K(\rho)+\rho K_1(\rho)+\mu\rho=0$ because $K_1(\rho)=\mu$. So
$A_F(\F_q)$ has at most $q/2$ elements. Proposition~\ref{prop:Acontrol}(i) with $H=uX^{1+2^j}$ adds the constant
$\ol u\rho^{2^j}$, and $A_{F_u}$ has no zero if and only if $\ol u\rho^{2^j}\notin A_F(\F_q)$.
\end{proof}

For $P_{m,r}$ with $r\ge5$ we have $\rho=1$, and the coefficient $1$ of $X^{2^{r-2}+1}$ replaced by $3$ gives
$A=A_{F^\sharp}$: an optimal lift with exactly the monomials of the natural lift, at the cost of two additions and no
multiplication in the termwise model, one multiplication less than $F^\sharp$ ($F^\sharp$ is the case $j=1$, where $X^3$ is
a new monomial). Theorems~\ref{thm:O3}, \ref{thm:M4} and~\ref{thm:inter}(ii),(iii) depend on the lift only through $A_F$ (given
$c_3$, $\pi$ and $\mu$), so the proof of Corollary~\ref{cor:interfamily} applies verbatim and this lift is also optimal
in every layer below the top.
For $r\ge5$ it therefore improves on $F^\sharp$ in cost and gives the same exceptional and layer spectra. For $p_c$ the
coefficient $1+c$ of $X^5$ ($j=2$) gives optimal lifts with the monomials of $p_c$ ($16$ for $m\ge7$, $12$ for $m=5$) and
no extra operation; the digit is found by linear algebra. With $f=t^5+t^2+1$, $t^7+t+1$ and $t^9+t^4+1$, the natural lift itself is optimal for $13$ of the $15$
values of $c$ at $m=5$, $28$ of $42$ at $m=7$ and $83$ of $126$ at $m=9$; these counts depend on the convention for
lifting coefficients. At $m=5$, $k=3$ we computed all exceptional rows of the $13$ optimal natural lifts and of two
digit-changed lifts $F_u$, and those of a further explicit optimal lift (given in the notes in Online Resource~1) for all
$15$ values of $c$, and at $m=7$, $k=3$ all $16384$ rows of that lift for two values of $c$: every exceptional entry is
$0$ or $q$, as predicted. Natural lifts and digits $\ol u$ with zeros of $A$ give entries $q^2$, also as predicted.

\paragraph{The trace family of Bartoli and Stănică.}
Let $m\ge5$ be odd, let $T_m=\sum_{i=0}^{m-1}X^{2^i}$ be the absolute trace polynomial and let $w\in\F_q^*$ satisfy
$\Tr(w)=\Tr(w^{-1})=0$. The polynomials $F_w=X^3+wX^2T_m+w^2XT_m+w^3T_m$ of~\cite[Thm.~2.4]{BS} satisfy (H): they are
reduced and quadratic, and
$F_w'=w^3+K$ with $K=(1+w)A_w^2+w^2A_w$, $A_w=X+wT_m(X)$, $\ker K=\{0,\rho\}$ and $\rho=A_w(w^2/(1+w))$. For $m=5$ and
$w=t$ (degree $18$) we find $\rho=31$, $\mu=13$ and $Z=\emptyset$. So this member lies in the cell where the optimum
is $q$ at $k=3$ and $2q$ at $k\ge4$, and no family is optimal at all levels (Corollary~\ref{cor:CL2}). Indeed
its natural lift has $A$ zero-free and exceptional rows ``$64$ appears $512$ times'' at $k=3$. It is optimal at
every $k\ge4$ by Theorem~\ref{thm:minimizers}. The lift with $A\equiv0$ has rows ``$32$ appears $1024$ times''
at $k=3$, but its rows reach $q^2=1024$ at $k=4$. For $m=5,7,9$ every admissible $w$ gives $\mu\ne0$. Half of
them have $Z=\emptyset$, and the other half have $|Z|=2$. The constant-derivative family
of~\cite[Thm.~2.3]{BS} lies outside (H), since it has no exceptional direction.

\paragraph{Open problems.}
(1) Closed formulas for the parameters $d$ and $c$ of Theorems~\ref{thm:repfamily} and~\ref{thm:sparse}, and for a good
digit $\ol u$ in Proposition~\ref{prop:lowcost}, that is, a closed-form optimal lift without new monomials. (2) Closed
formulas for the residual configurations of the intermediate layers,
and expressions of the level data of $T$ through the coefficients of $F$. (3) The layer values $D_s$ of
Proposition~\ref{prop:layerchar} for $r=3$ and for general lifts. (4) Per-key bounds for single differentials beyond the
averaging argument of Proposition~\ref{prop:fixedkey}.

\section*{Statements and Declarations}

\paragraph{Funding.} No funding was received for conducting this study.

\paragraph{Competing interests.} The author has no competing interests to declare that are relevant to the content
of this article.

\paragraph{Data and code availability.} The programs, inputs, logs and result files of the finite computations of
Appendix~\ref{app:verification}, the replay scripts \texttt{replay\_core.sh} and \texttt{replay\_full.sh} with their
negative controls, and the Lean~4 sources with their build, audit and kernel re-check records are provided as
Online Resource~1 (archive \texttt{ESM\_1.zip}). The core replay takes about six minutes on two cores and needs gcc and
Python~3 with SymPy and NumPy; the full replay takes about one and a half hours. The same material, with the blueprint of the formalization, is archived at
\url{https://doi.org/10.5281/zenodo.23683493} (source repository \url{https://github.com/hirokifukui/residue-apn},
tag \texttt{v2.0.0}).

\paragraph{Author contribution.} The author is the sole author of this work and carried out all parts of it.

\paragraph{Use of generative AI.} AI tools were used in this work as described in Appendix~\ref{app:verification}.

\appendix
\section{Verification}\label{app:verification}

\paragraph{Use of AI tools.}
Claude (Anthropic), including Claude Code, and ChatGPT (OpenAI) were used to explore and draft proofs, to write the
Lean code and the programs for the finite computations, to draft and revise the text, and to review successive drafts.
The model versions changed during the project; representative prompts are available on request. The author made all
mathematical and editorial decisions, checked every definition, statement, proof and citation, and takes sole
responsibility for the content. The correctness of the formalized results rests on the Lean kernel, and the reported
spectra were cross-checked by independently written programs; neither check depends on accepting an assertion made by
an AI tool.

\paragraph{Computations.}
Exceptional rows were counted by a C program over $\GR(2^k,m)$ and compared with the predictions of
Theorems~\ref{thm:S12} and~\ref{thm:S3} by a fail-closed validator. The validator rejects empty or unsupported
scopes and pins the expected record counts. Rows outside the scope of the spectrum theorems are checked
only against the proven lower bound $q^2$ and are not counted as verified spectra. For $m=5$ all $1024$
exceptional rows at $k=3$ and sampled rows at $k=4$ were checked for $16$ lifts at $k=3$ and $k=4$, giving $32$ records: $27$ spectrum matches and $5$
records outside the spectrum theorems that were checked against the lower bound $q^2$. For $m=7$ all $16384$ exceptional rows were checked for the representative above,
for $F^\sharp$ and the natural lift of $P_{7,3}$, and for $F^\sharp$ and the natural lift of $P_{7,5}$ (whose rows
reach exactly $q^2$, with $n_{2q}=8064$ and $n_{q^2}=2$ as in Corollary~\ref{cor:natural}). For $m=9$ two sampled
exceptional rows each of the natural lifts of $P_{9,5}$ and $P_{9,7}$ over $\GR(8,9)$ were computed in full
($2^{27}$ inputs per row); they are ``$2^{10}$ appears $130560$ times, $2^{18}$ twice'' and ``$2^{10}$ appears $130560$
times, $2^{19}$ once'', as predicted. An independent Magma implementation agrees on $64$ distinct directions for four
lifts. The two-word normalization of Proposition~\ref{prop:markov} was checked exhaustively on a two-word model over
$\Z/8$ ($258048$ characteristics), and the bound of Proposition~\ref{prop:hullunit} exhaustively on two words over
$\GR(4,2)$ for $32$ random valuation-preserving permutations (all input and output differences, exact integers). The
replay scripts are fail-fast; they reject empty, unknown or repeated stage
lists, report a partial run as partial, and are tested by negative controls.

\paragraph{Lean.}
The central results are formalized in Lean~4 (Mathlib) over an abstract axiom system for Galois rings, and
$\GR(2^k,m)=(\Z/2^k)[X]/(f)$ is shown to satisfy it. The formalization covers the invariants and their control,
the lower bounds, the minimax theorem and all minimizers ($k=3$ and $k\ge4$), the spectra, all layer results
(Theorems~\ref{thm:O3}, \ref{thm:M4}, \ref{thm:inter} with the multiplicities of (ii), Corollaries~\ref{cor:M5}
and~\ref{cor:interfamily}), the
residue APN and permutation properties of $P_{m,r}$, Theorem~\ref{thm:oneterm} on all unit directions together with
its degree statement, the natural-lift benchmarks of Theorem~\ref{thm:oneterm} ($\Dexc=4q$ for $r=3$ at every level,
an entry $\ge q^2$ in every exceptional row for $r\ge5$ at every level, and the full spectrum of
Corollary~\ref{cor:natural}), the literal comparison of difference tables across rings in Theorem~\ref{thm:S3},
Proposition~\ref{prop:markov} for an arbitrary finite additive commutative group, S-box and additive automorphism,
part (a) of Corollary~\ref{cor:crypto} (for any lift whose unit rows are bounded by $D$, any $M$ whose reduction has
branch number $B$, and any characteristic with a unit input component, the probability is at most
$(D/|R_k|)^{B\lfloor N/2\rfloor}$; for $F^\sharp$ with $D=2q$ and no hypotheses beyond those of the corollary), and the congruence statement of
Proposition~\ref{prop:congpart} for any commutative ring, polynomial S-box, additive layer and key schedule (the
comparison with a random permutation is on paper), the field-level part of Theorem~\ref{thm:repfamily},
Proposition~\ref{prop:layerchar} (for any lift whose derivative has no zero and any bound $D_s$, the top-layer value for
every such lift, and $F^\sharp$ with $r\ge5$ in every layer), Proposition~\ref{prop:hull} for two rounds over any finite
abelian group of words, the transfer of the branch number of $\ol M$ to $R_k^n$, Proposition~\ref{prop:hullunit} for two
rounds (for any polynomial S-box that permutes $R_k$, reduces to a permutation and has $p'$ zero-free, in every layer
$s$ and with any bound $D_s$), and Proposition~\ref{prop:fixedkey}; the instances $\eta=2/q$, $D_0=2q$ and
$D_s=q^{s+1}$ ($1\le s\le k-2$) are formalized for $F^\sharp$ with $r\ge5$, and the top-layer instance
$D_{k-1}=2q^{k-1}$ for every lift whose $p'$ is two-to-one.

Optimality across levels is formalized in two forms. Theorem~\ref{thm:CL} and the existence statement for one fixed
pair of levels are formalized abstractly and on $\GR(2^k,m)\to\GR(8,m)$. Corollary~\ref{cor:CL2}, with its quantifier
over all levels, is formalized for an abstract tower of Galois rings over a common coefficient ring, and this tower
is constructed for the concrete rings: for a monic $g\in\Z[X]$ with irreducible reduction modulo $2$, the rings
$\GR(2^{j+3},m)=(\Z/2^{j+3})[X]/(g)$, $j\ge0$, with coefficients taken from $\Z[X]/(g)$, satisfy its hypotheses, so
Corollary~\ref{cor:CL2} holds formally on $\GR(2^k,m)$ for all levels at once. All audited declarations depend only on the standard axioms,
and the compiled modules were re-checked by the kernel replay tool \texttt{leanchecker}. Not formalized: the operation
count of Theorem~\ref{thm:oneterm}, the passage from two to $N\ge2$ rounds in Propositions~\ref{prop:hull}
and~\ref{prop:hullunit}, the value $\eta=2/q$ for a general lift, the statement of
Theorem~\ref{thm:oneterm} on which monomials occur (its degree statement is formalized), the existence part and the
coefficient formulas of Theorem~\ref{thm:sparse} (its finite certificate, the irreducibility of $t^{17}+t^3+1$, the
$125769$ points and the counts for $m\le13$, is recomputed by a separate program in the replay),
Proposition~\ref{prop:lowcost} for a general $p$ (for $P_{m,r}$ the equality of $A_F$ with that of $F^\sharp$ and the
resulting unit-direction bound are formalized; its layer optimality is on paper), Proposition~\ref{prop:GR} and the classification of the intermediate layers built on it, and the
finite computations. A theorem-level scope table is
part of Online Resource~1.

\end{document}